\documentclass{commat}

\allowdisplaybreaks

\usepackage[vcentermath,enableskew]{youngtab}
\usepackage[shortlabels]{enumitem}

\newtheorem*{thmA}{Theorem A}
\newtheorem*{thmB}{Theorem B}
\newtheorem*{thmC}{Theorem C}
\newtheorem*{thmD}{Theorem D}
\newtheorem{notation}{Notation}

\numberwithin{equation}{section}

%NEW COMMANDS:

\newcommand{\spin}{\mathbf{S}}

% Roman (used in math mode)
               % Use for exponentials.
           % Use for real numbers.

\newcommand{\fg}{{\mathfrak g}}
\newcommand{\fh}{{\mathfrak h}}
\newcommand{\fl}{{\mathfrak l}}
\newcommand{\fp}{{\mathfrak p}}

\newcommand{\fq}{{\mathfrak q}}
\newcommand{\ft}{{\mathfrak t}}

\newcommand{\gl}{\mathfrak{gl}}

\newcommand{\be}{\begin{equation}}
        \newcommand{\beu}{\begin{equation*}}

                \newcommand{\mult}{\mathrm{mult}}
                
                \title{%
                        Matrix formulas for multiplicities in the spin module 
                        % Please, capitalize only the first word
                }
                
                \author{%
                        Lucas Fresse and Salah Mehdi
                        % Please, use "Firstname Lastname" format, without abreviations
                }
                
                \authorinfo[%
                Lucas Fresse]{% Please, use "F. Name" format
                        Universit\'e de Lorraine, CNRS, IECL, F-54000 Nancy, France}{%
                        % Please, use "University, Country" format
                        lucas.fresse@univ-lorraine.fr
                }
                
                \authorinfo[%
                Salah Mehdi]{% Please, use "S. Name" format
                        Universit\'e de Lorraine, CNRS, IECL, F-57000 Metz, France, and\\ New York University - Abu Dhabi, UAE}{%
                        % Please, use "University, Country" format
                        salah.mehdi@univ-lorraine.fr, salah.mehdi@nyu.edu
                }
                
                \abstract{%
                        We obtain inductive and enumerative formulas for the multiplicities of the weights of the spin module for the Clifford algebra of a Levi subalgebra in a complex semisimple Lie algebra. Our formulas involve only matrices and tableaux, and our techniques combine linear algebra, Lie theory, and combinatorics. Moreover, this suggests a relationship with complex nilpotent orbits. The case of the special linear Lie algebra $\mathfrak{sl}(n,{\mathbb C})$ is emphasized.
                }
                
                \keywords{%
                        Reductive Lie algebras, matrix Lie algebras, spinors, finite-dimensional modules, matrix representation of weights, tableaux.
                }
                
                \msc{%
                        17B10, 15B30, 15A66, 05A17.
                }

                \VOLUME{32}
                \YEAR{2024}
                \NUMBER{1}
                \firstpage{203}
                \DOI{https://doi.org/10.46298/cm.11156}
                
                \begin{document}

                        \section{Introduction}\label{intro}
                        Spinors were used by the Nobel laureate Paul Dirac in the late 1920's to describe the relativistic quantum kinematics of a~free electron on the Minkowski spacetime~\cite{D}. One feature of the Dirac equation is that it involves a~differential operator, with coefficients in a~Clifford algebra, whose square differs from the Laplacian by a~constant. This Clifford algebra admits a~unique irreducible finite-dimensional representation, up to isomorphism, known as the spin module. The solutions to the Dirac equation are precisely sections of a~bundle over the Minkowski spacetime induced by the spin module.  Dirac operators and spinors have been used extensively in a~wide spectrum of areas such as, just to mention a~few, Seiberg-Witten invariants~\cite{SWa},~\cite{SWb}, non-commutative geometry~\cite{C},~\cite{CMa}, differential geometry~\cite{Fr} or representation theory of Lie groups~\cite{HP}.
                        
                        The aim of this paper is to provide elementary and easily implementable formulas for the multiplicities of the spin module for matrix Lie algebras. Our formulas involve matrices with integer entries and weighted tableaux.
                        Our main results are outlined in Theorems A, B, C, and D below. Before describing our results, for the convenience of the reader, we start with a~brief review on multiplicities for finite-dimensional modules of reductive Lie algebras.
                        
                        \subsection{Multiplicity formulas for highest weight modules}\label{section-1.1}
                        Finite-dimensional modules of complex semisimple Lie algebras are completely characterized by weights and their multiplicities: two modules with the same weights must be isomorphic. In other words, two irreducible finite-dimensional modules are isomorphic if, and only if, they have the same character. The character encodes the multiplicities of a~module. To compute multiplicities, numerous formulas of different nature exist, each of which has its own strengths and weaknesses.
                        
                        More precisely, let ${\mathfrak h}$ be a~complex semisimple Lie algebra, ${\mathfrak t}\subset{\mathfrak h}$ a~Cartan subalgebra, $\Phi({\mathfrak h})$ the set of ${\mathfrak t}$-roots in ${\mathfrak h}$ and $W({\mathfrak h},{\mathfrak t})$ the Weyl group generated by the root reflections. Let ${\mathfrak t}_{\mathbb R}\subset{\mathfrak t}$ be the real vector subspace spanned by the co-root vectors, so that ${\mathfrak t}$ is the complexification of ${\mathfrak t}_{\mathbb R}$. Since the restriction to ${\mathfrak t}$ of the Killing form of ${\mathfrak g}$ remains non-degenerate, there exists a~non-degenerate complex bilinear form $\langle\;,\;\rangle$ on ${\mathfrak t}^*$. In particular, $\langle\;,\;\rangle$ is positive definite on ${\mathfrak t}_{\mathbb R}$. This form will be used to identify ${\mathfrak t}_{\mathbb R}$ and ${\mathfrak t}_{\mathbb R}^*$. Write $\mid\mid\;\mid\mid$ for the induced norm on ${\mathfrak t}_{\mathbb R}^*$. Fixing an order on ${\mathfrak t}_{\mathbb R}^*$ defines a~set $\Phi^+ ({\mathfrak h})$ of positive roots and a~set $\Delta({\mathfrak h})$ of simple roots. 
                        
                        By the highest weight theorem, the set of finite dimensional irreducible ${\mathfrak h}$-modules is in bijective correspondence with the set of $\Phi^+ ({\mathfrak h})$-dominant algebraically integral weights. Let $V[\lambda]$ be a~finite-dimensional irreducible ${\mathfrak h}$-module with highest weight $\lambda\in{\mathfrak t}_{\mathbb R}^*$, write $\Lambda(V[\lambda])\subset{\mathfrak t}_{\mathbb R}^*$ for the set of ${\mathfrak t}$-weights of $V[\lambda]$. In fact, any dominant weight of $V[\lambda]$ is of the form $\lambda-\sum n_i\alpha_i$, where $\alpha_i\in\Delta({\mathfrak h})$ and $n_i$ are non-negative integers. If $\text{mult}_{V[\lambda]}(\mu)$ denotes the multiplicity of the weights $\mu$ in $V[\lambda]$, i.e., the dimension of the $\mu$-weight space $V[\lambda]_\mu$, then one has $\text{mult}_{V[\lambda]}(\lambda)=1$. Moreover, the weights and their multiplicities are invariant under the Weyl group $W({\mathfrak h},{\mathfrak t})$, and it is therefore sufficient to determine the multiplicities of the dominant weights. The character of $V[\lambda]$ can be defined as the following element of the group ring ${\mathbb Z}[{\mathfrak t}_{\mathbb R}^* ]$:
                        \begin{equation*}
                                \chi(V[\lambda])=\sum_{\mu\in\Lambda(V[\lambda])}\text{mult}_{V[\lambda]}(\mu)e^\mu.
                        \end{equation*}
                        This definition is compatible with the direct sum and the tensor product of modules. The Weyl character formula states that
                        \begin{equation}\label{weyl}
                                \chi(V[\lambda])\sum_{w\in W({\mathfrak h},{\mathfrak t})}(-1)^{\ell(w)}e^{w(\rho({\mathfrak h}))}=\sum_{w\in W({\mathfrak h},{\mathfrak t})}(-1)^{\ell(w)}e^{w(\lambda+\rho({\mathfrak h}))}
                        \end{equation}
                        where $\rho({\mathfrak h})=\frac{1}{2}\sum_{\alpha\in\Phi^+ ({\mathfrak h})}\alpha$ is half the sum of positive roots and $\ell(w)$ is the length of $w$. In particular, one deduces the Weyl dimension formula:
                        \begin{equation}\label{weyldim}
                                \text{dim}(V[\lambda])=\prod_{\alpha\in\Phi^+ ({\mathfrak h})}\frac{\langle \lambda+\rho({\mathfrak h}),\alpha\rangle}{\langle\rho({\mathfrak h}),\alpha\rangle}.
                        \end{equation}
                        For a~thorough discussion on finite-dimensional modules for complex semisimple Lie algebras, see for instance~\cite{GW} or~\cite{H}.
                        
                        One of the classical multiplicity formulas is the Freudenthal formula which can be stated as follows~\cite{F}:
                        \begin{equation}\label{freud}
                                \big(2\langle\lambda-\mu,\mu+\rho({\mathfrak h})\rangle+\mid\mid\lambda-\mu\mid\mid^2 \big)\text{mult}_{V[\lambda]}(\mu)=2\sum_{\alpha\in\Phi^+ ({\mathfrak h})}
                                \sum_{k\geq 1}\langle\mu+k\alpha,\alpha\rangle\text{mult}_{V[\lambda]}(\mu+k\alpha).
                        \end{equation}
                        Note that if $\mu$ is dominant then $\langle\lambda-\mu,\mu+\rho({\mathfrak h})\rangle$ must be positive. Though the Freudenthal multiplicity formula is a~simple iterative formula, it is not easy to implement. Indeed, one needs to know all the higher weights $\mu+k\alpha$ that are involved, along with their multiplicities.
                        
                        Another classical multiplicity formula is due to Kostant. It involves a~partition function ${\mathcal P}:{\mathfrak t}_{\mathbb R}^* \rightarrow{\mathbb N}$ defined as follows: ${\mathcal P}(0)=1$ and ${\mathcal P}(\mu)$ is the number of different ways to write $\mu$ as a~sum of positive roots, so that ${\mathcal P}(\mu)=0$ unless $\mu$ is positive. Now, Kostant multiplicity formula asserts that~\cite{K}:
                        \begin{equation}\label{kostant}
                                \text{mult}_{V[\lambda]}(\mu)=\sum_{w\in W({\mathfrak h},{\mathfrak t})}(-1)^{\ell(w)}{\mathcal P}(w(\lambda+\rho({\mathfrak h}))-(\mu+\rho({\mathfrak h}))).
                        \end{equation}
                        Although this formula computes directly the multiplicity of a~weight, the summation goes over the full Weyl group $W({\mathfrak h},{\mathfrak t})$ and requires keeping track of those summands with a~nonzero contribution. Moreover, Kostant's formula involves a~counting function whose values are not always simple to determine.
                        
                        There are several more involved multiplicity formulas such as, just to mention a~few, Lusztig's formula based on intersection theory of Deligne-Goresky-MacPherson and singularities of Schubert varieties~\cite{Lus}, Littelmann's formula involving paths and root operators~\cite{L} and Sahi's formula which expresses multiplicities as sums of rational numbers related to the dual affine Weyl group~\cite{S}.
                        
                        In this paper, we prove explicit formulas for multiplicities of the spin module when ${\mathfrak h}$ is a~quadratic subalgebra of a~semisimple Lie algebra $\fg$. Our interest in this setting is motivated by both Dirac operators and nilpotent orbits.
                        On the one hand, in our first paper~\cite{FM} we studied the approximation of nilpotent orbits for simple Lie groups.
                        On the other hand, Dirac operators and nilpotent orbits play, each on its own, an important role in representation theory. For instance, Springer correspondence establishes a~bijection between the set of complex nilpotent orbits of a~semisimple algebraic group and the set of (equivalences of) irreducible representations of the (full) Weyl group (cf.~\cite{Spr}), while discrete or principal series representations of a~semisimple Lie group can be explicitly realized as spaces of $L^2$ or smooth sections of suitable twists of the spin bundle over homogeneous spaces, these sections being harmonic for the corresponding Dirac operator (cf.~\cite{AS},~\cite{P},~\cite{MZ2}).
                        
                        Moreover, nilpotent orbits and Dirac operators can be related in a~precise way through associated cycles of Harish-Chandra modules and the asymptotics of global characters (cf.~\cite{MPVZ}). The present paper provides combinatorial formulas for the multiplicities of the spin module in terms of matrices and tableaux. Our formulas suggest a~relationship between multiplicities and nilpotent orbits.
                        
                        \subsection{Construction of the spin module over a~Levi subalgebra}\label{section-1.2}
                        
                        The general setting of our paper is as follows : ${\mathfrak h}$ is a~Levi subalgebra of a~complex semisimple Lie algebra
                        ${\mathfrak g}$ containing a~Cartan subalgebra ${\mathfrak t}$ of ${\mathfrak g}$.
                        It is worth to mention that $\mathfrak{h}$ need not be semisimple but it is reductive and highest weight theory still applies in the following way.
                        Write ${\mathfrak h}=\lbrack{\mathfrak h},{\mathfrak h}\rbrack\oplus{\mathcal Z}_{\mathfrak h}$, where ${\mathcal Z}_{\mathfrak h}$ is the center of ${\mathfrak h}$ and ${\mathfrak t}={\mathfrak t}^\prime\oplus{\mathcal Z}_{\mathfrak h}$ where ${\mathfrak t}^\prime$ is a~Cartan subalgebra of the semisimple part $\lbrack{\mathfrak h},{\mathfrak h}\rbrack$ of ${\mathfrak h}$. If $V$ is an irreducible ${\mathfrak h}$-module then, by Schur's lemma, ${\mathcal Z}_{\mathfrak h}$ acts by scalars on $V$. In particular, $V$ is an irreducible highest weight module for $\lbrack{\mathfrak h},{\mathfrak h}\rbrack$, say with highest weight $\lambda^\prime$. We will consider $V$ as a~highest weight ${\mathfrak h}$-module with highest weight $\lambda$ where $\lambda_{\mid_{\lbrack{\mathfrak h},{\mathfrak h}\rbrack}}=\lambda^\prime$ and $\lambda_{\mid_{{\mathcal Z}_{\mathfrak h}}}$ is the scalar by which ${\mathcal Z}_{\mathfrak h}$ acts on $V$. (See~\cite{Di} for more details.)
                        
                        If ${\mathfrak p}^\pm$ are the two standard parabolic subalgebras having ${\mathfrak h}$ as a~Levi factor with nilradicals ${\mathfrak u}^\pm$, then the vector space
                        \begin{equation*}
                                {\mathfrak q}:={\mathfrak u}^+ \oplus {\mathfrak u}^-
                        \end{equation*}
                        is the orthogonal complement of ${\mathfrak h}$ in ${\mathfrak g}$, with respect to the Killing form. Note that ${\mathfrak u}^+$ and ${\mathfrak u}^-$ are maximally dual isotropic in ${\mathfrak q}$. Let $\Phi$, $\Phi({\mathfrak h})$ and $\Phi({\mathfrak q})$ be the sets of ${\mathfrak t}$-roots in ${\mathfrak g}$, ${\mathfrak h}$ and ${\mathfrak q}$ respectively. Fix compatible positive systems $\Phi^+$, $\Phi^+ ({\mathfrak h})$ and $\Phi^+ ({\mathfrak q})$. As above, $W({\mathfrak g},{\mathfrak t})$ will stand for the Weyl group of ${\mathfrak t}$-roots in ${\mathfrak g}$, while $\rho({\mathfrak g})$, $\rho({\mathfrak h})$ and $\rho({\mathfrak q})$ for half the sums of positive ${\mathfrak t}$-roots in ${\mathfrak g}$, ${\mathfrak h}$ and ${\mathfrak q}$ respectively. If $\textbf{C}({\mathfrak q})$ denotes the Clifford algebra of ${\mathfrak q}$ and
                        \begin{equation*}
                                \textbf{S}:=\bigwedge{\mathfrak u}^+,
                        \end{equation*}
                        then the spin module for ${\mathfrak h}$ is defined by the composition map:
                        \begin{equation*}
                                {\mathfrak h}\xrightarrow{\text{ad}}{\mathfrak s}{\mathfrak o}({\mathfrak q})\subset{\bf C}({\mathfrak q})\xrightarrow{\gamma}\text{End}(\textbf{S})
                        \end{equation*}
                        where $\gamma$ is the Clifford multiplication. The weights of $\spin$ form the set
                        \begin{equation}\label{spinweight}
                                \Lambda(\textbf{S})=\Big\{\mu(A):=\frac{1}{2}\Big(\sum_{\alpha\not\in A}\alpha-\sum_{\alpha\in A}\alpha\Big)\;\mid\;A\subset\Phi^+ ({\mathfrak q})\Big\}\subset\mathfrak{t}_{\mathbb R}^*,
                        \end{equation}
                        so that the multiplicity of $\beta\in\Lambda(\textbf{S})$ is given by
                        \begin{equation*}
                                \text{mult}_{\spin}(\beta)=\#\big\{A\subset\Phi^+ ({\mathfrak q})\mid\mu(A)=\beta\big\}.
                        \end{equation*}
                        %
                        %%%%%%%%%%%%
                        See Example~\hyperref[E3.4b]{2.3(b)} for an explicit description of $\spin$ for ${\mathfrak s}{\mathfrak l}(n)$ and a~particular Levi subalgebra. It should be noted that even in such a~case, the action of ${\mathfrak h}$ on $\spin$ is not the standard ``matrix" action but a~rather different action involving the Clifford multiplication. This is a~special case of the spin representation defined for pairs $({\mathfrak g},{\mathfrak h})$, where ${\mathfrak h}$ is a~reductive subalgebra of ${\mathfrak g}$ (see, e.g.,~\cite[Chapter 6]{GW} or~\cite[Section 1]{MZ}). For the purpose of this paper, we will focus on the case where $\fh$ is a~Levi subalgebra of $\fg$.
                        
                        Unlike $W({\mathfrak h},{\mathfrak t})$, the Weyl group $W({\mathfrak g},{\mathfrak t})$ does not act on spin weights. In Section~\ref{S3.1}, we observe that $\Lambda(\textbf{S})$ is in fact stable under the subgroup
                        \begin{equation*}
                                W^\prime:=\{w\in W({\mathfrak g},{\mathfrak t})\;\mid\;w(\Phi({\mathfrak q}))=\Phi({\mathfrak q})\}.
                        \end{equation*}
                        In particular, to compute the weights of the spin module it is enough to pick one Levi subalgebra in each $W({\mathfrak g},{\mathfrak t})$-orbit on the set of Levi subalgebras in ${\mathfrak g}$ and then pick one weight in each $W^\prime$-orbit on $\Lambda(\textbf{S})$.
                        
                        Recall that standard parabolic subalgebras of ${\mathfrak g}$ are parametrized as ${\mathfrak p}_I$, with Levi factor ${\mathfrak l}_I$, where $I$ is a~subset of the set $\Delta$ of simple ${\mathfrak t}$-roots in ${\mathfrak g}$. Let $\textbf{S}_I:=\bigwedge ({\mathfrak l}_I\cap{\mathfrak u}^+)$ be the spin module associated with the pair $({\mathfrak l}_I,{\mathfrak l}_I\cap{\mathfrak l})$. In the case when the subset $A\subset\Phi^+ ({\mathfrak q})$ can be decomposed as the union $A=\bigcup_j A_j$
                        of subsets $A_j\subset\Phi^+ \cap\langle I_j\rangle$ (hereafter $\langle \cdot \rangle$ stands for the linear span), where $I_j\subset\Delta$ are such that $I_i\perp I_j$ whenever $i\neq j$, we obtain the following formula for
                        multiplicities (Proposition~\ref{L3.8-new}):
                        \begin{equation*}
                                \text{mult}_{\spin}(\mu(A))=\prod_j \text{mult}_{\spin_{I_j}}(\mu({A_j})).
                        \end{equation*}
                        In the special case where ${\mathfrak h}$ coincides with the Cartan subalgebra ${\mathfrak t}$, we see that the multiplicity of $\mu(A)$ is equal to $1$
                        exactly when $A$ is saturated, i.e., for all $\alpha,\beta\in A$ with $\alpha+\beta\in\Phi$, then $\alpha+\beta$ must be in $A$; and for all
                        $\gamma,\delta\in \Phi^+ \setminus A$, with $\gamma+\delta\in\Phi$ then one has $\gamma+\delta\notin A$ (Proposition~\ref{pro-saturated}).
                        
                        \subsection{Main results involving inductive formulas for multiplicities of spin modules over Levi subalgebras}
                        In Section~\ref{section-inductive}, we prove an inductive formula for the multiplicities of the spin module, when ${\mathfrak g}={\mathfrak s}{\mathfrak l}(n,{\mathbb C})$ and $n\geq 1$. Here, we choose the usual positive system 
                        \[\Phi^+ =\{\epsilon_i-\epsilon_j\mid 1\leq i<j\leq n\}.\] 
                        Recall from~\cite[pp.\,112-113]{CM} that Levi subalgebras in ${\mathfrak s}{\mathfrak l}(n,{\mathbb C})$ (up to $W(\mathfrak{g},\mathfrak{t})$ action) are determined by nondecreasing sequences $c=(c_1,\ldots,c_k)$ summing to $n$, that is,
                        \begin{equation*}
                                c_1\leq c_2\leq\cdots\leq c_k\;\text{ and }\;c_1+c_2+\cdots+c_k=n.
                        \end{equation*}
                        Fixing such a~sequence $c$, the corresponding Levi subalgebra $\mathfrak{h}$ consists of blockwise diagonal matrices with blocks of sizes $c_1,\ldots, c_k$ respectively (see also Example~\ref{E3.4}).
                        
                        If $\mu=\sum_{i=1}^n b_i\epsilon_i$ is a~weight of the spin module, we will write
                        \[
                        \text{mult}_{\spin}(\mu):=\text{mult}_{(c_1,\ldots,c_k)}(b_1,\ldots, b_n).
                        \]
                        Note that one must have $\sum_i b_i=0$. If $X$ is a~set, ${\mathcal P}_k(X)$ will denote the set of subsets $J\subset X$ with $k$ elements and $\mathbf{1}_J:X\to \{0,1\}$ the function given by $\mathbf{1}_J(x)=1$ or $0$ depending on whether $x\in J$ or $x\notin J$.
                        
                        \begin{thmA}[see Theorem~\ref{T-inductive-Levi}]
                                When $\fh$ is a~Levi subalgebra of $\fg=\mathfrak{sl}(n,\mathbb{C})$, with the above notation,~we~have \rm
                                \begin{eqnarray*}
                                        &&\text{mult}_{(c_1,\ldots,c_k)}(b_1,\ldots,b_n) \\
                                        &&=\sum_{J}\text{mult}_{(c_1-1,c_2,\ldots,c_k)}\Big(b_2,\ldots,b_{c_1},b_{c_1+1}+\mathbf{1}_J(c_1+1)-\frac{1}{2},\ldots,b_n+\mathbf{1}_J(n)-\frac{1}{2}\Big),
                                \end{eqnarray*}
                                where $J$ runs over $\mathcal{P}_{b_1+\frac{n-c_1}{2}}(\{c_1+1,\ldots,n\})$ in the sum.
                        \end{thmA}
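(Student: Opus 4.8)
The plan is to peel the first index off the first block, reducing from $\fg=\mathfrak{sl}(n,\mathbb{C})$ to $\mathfrak{sl}(n-1,\mathbb{C})$, so that the asserted identity becomes a genuine induction on $n$. Throughout I use the block decomposition of $\{1,\dots,n\}$ into consecutive intervals of lengths $c_1,\dots,c_k$, so that $\epsilon_i-\epsilon_j$ (with $i<j$) belongs to $\Phi^+(\fq)$ exactly when $i$ and $j$ lie in distinct blocks, and I start from the description \eqref{spinweight} of the spin weights together with $\mathrm{mult}_{\spin}(\beta)=\#\{A\subset\Phi^+(\fq):\mu(A)=\beta\}$.

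First I would record the combinatorial splitting $\Phi^+(\fq)=R_1\sqcup\Phi^+(\fq'')$, where $R_1:=\{\epsilon_1-\epsilon_j:c_1<j\le n\}$ gathers the $n-c_1$ roots through the index $1$, and $\Phi^+(\fq)\setminus R_1=\{\epsilon_i-\epsilon_j:2\le i<j\le n,\ i,j\ \text{in distinct blocks}\}$ is, after relabeling $\{2,\dots,n\}$ as $\{1,\dots,n-1\}$, precisely the positive $\fq''$-system for the Levi subalgebra $\fh''\subset\mathfrak{sl}(n-1,\mathbb{C})$ attached to $(c_1-1,c_2,\dots,c_k)$ (read as $(c_2,\dots,c_k)$ when $c_1=1$, and with $R_1=\emptyset$ in the extreme case $k=1$, where the formula is trivially true). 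Any $A\subset\Phi^+(\fq)$ then splits as $A=A_1\sqcup A''$ with $A_1=A\cap R_1$ and $A''=A\cap\Phi^+(\fq'')$, and, by \eqref{spinweight}, the weight decomposes additively as $\mu(A)=\nu(A_1)+\mu''(A'')$, where $\mu''(A'')$ is the spin weight attached to $\fh''$ (which has no $\epsilon_1$-component) and $\nu(A_1):=\frac{1}{2}\big(\sum_{\alpha\in R_1\setminus A_1}\alpha-\sum_{\alpha\in A_1}\alpha\big)$.

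The heart of the argument is a short computation. Encoding $A_1$ by the complementary index set $J:=\{\,j:c_1<j\le n,\ \epsilon_1-\epsilon_j\notin A_1\,\}\subset\{c_1+1,\dots,n\}$, one gets
\[
\nu(A_1)=\Big(|J|-\frac{n-c_1}{2}\Big)\epsilon_1+\sum_{j=c_1+1}^{n}\Big(\frac{1}{2}-\mathbf{1}_J(j)\Big)\epsilon_j,
\]
and the assignment $A\mapsto(J,A'')$ is a bijection of $\mathcal{P}(\Phi^+(\fq))$ onto $\mathcal{P}(\{c_1+1,\dots,n\})\times\mathcal{P}(\Phi^+(\fq''))$. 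Then I would fix the target weight $\mu=\sum_i b_i\epsilon_i$, write $\mu''(A'')=\sum_{j\ge2}b_j''\epsilon_j$, and compare $\epsilon_i$-coefficients in $\mu(A)=\nu(A_1)+\mu''(A'')$: the $\epsilon_1$-coefficient forces $|J|=b_1+\frac{n-c_1}{2}$, which is exactly why $J$ ranges over $\mathcal{P}_{b_1+\frac{n-c_1}{2}}(\{c_1+1,\dots,n\})$; for $2\le j\le c_1$ one finds $b_j''=b_j$; and for $c_1<j\le n$ one finds $b_j''=b_j+\mathbf{1}_J(j)-\frac{1}{2}$. Consequently, for each admissible $J$ the number of $A''$ realizing the prescribed $\mu''(A'')$ equals $\mathrm{mult}_{(c_1-1,c_2,\dots,c_k)}\big(b_2,\dots,b_{c_1},b_{c_1+1}+\mathbf{1}_J(c_1+1)-\frac{1}{2},\dots,b_n+\mathbf{1}_J(n)-\frac{1}{2}\big)$, and summing over $J$ through the above bijection yields $\mathrm{mult}_{(c_1,\dots,c_k)}(b_1,\dots,b_n)=\#\{A:\mu(A)=\mu\}=\sum_J\mathrm{mult}_{(c_1-1,\dots,c_k)}(\cdots)$, which is the claim.

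I do not expect a serious difficulty here: the evaluation of $\nu(A_1)$ and the coefficient matching are routine once the splitting of $\Phi^+(\fq)$ is in place. The main points requiring care are the precise identification of $\Phi^+(\fq)\setminus R_1$ with the positive $\fq''$-system after relabeling of the indices, and the bookkeeping of the half-integer shifts $-\frac{1}{2}$ coming from the different $\rho$-normalizations of $\mu(\cdot)$ for $\fh$ and for $\fh''$; that is the place where a sign or normalization slip would most easily occur.
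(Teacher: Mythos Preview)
Your proof is correct and follows essentially the same strategy as the paper's: peel off the index $1$, split each $A\subset\Phi^+(\fq)$ into its part through $1$ (encoded by $J$) and its remainder, and match $\epsilon_i$-coefficients to reduce to the smaller Levi $(c_1-1,c_2,\ldots,c_k)$. The only cosmetic difference is that the paper first passes through its matrix encoding $\mathcal{M}(\fq)$ of subsets $A$ (Lemma~\ref{L-matrices}), so that your splitting $A=A_1\sqcup A''$ becomes the decomposition of a matrix $a\in\mathcal{M}(\fq)$ into its first row/column and a submatrix $a'$; the computations of $\nu(A_1)$ and of the coefficient shifts are identical to the paper's computation of $a_{i,*}$ versus $a'_{i,*}$ and $\sigma_i$ versus $\sigma'_i$.
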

                        
                        In Section~\ref{section-Cartan-sln}, we continue with ${\mathfrak g}={\mathfrak s}{\mathfrak l}(n,{\mathbb C})$ but we now assume that the Levi algebra ${\mathfrak h}$ coincides with the Cartan subalgebra ${\mathfrak t}$. Let $\text{Part}(\binom{n}{2};n)$ be the set of partitions of $\binom{n}{2}$ with at most $n$ parts, i.e., sequences of positive integers
                        $\lambda=(\lambda_1\geq\ldots\geq\lambda_n)$ with $\lambda_1+\ldots+\lambda_n=\binom{n}{2}$.
                        Write $\mathcal{P}(n)$ for the subset of partitions $\lambda\in\mathrm{Part}(\binom{n}{2};n)$ such that $\lambda\preceq(n-1,n-2,\ldots,1)$, where $\preceq$ stands for the dominance order on partitions (see Section~\ref{section-5.1}).
                        Define
                        \begin{eqnarray*}
                                \mu[\lambda]:=(\lambda_1-\lambda_2)\varpi_1+\ldots+(\lambda_{n-1}-\lambda_n)\varpi_{n-1}
                        \end{eqnarray*}
                        where $\varpi_i=\epsilon_1+\ldots+\epsilon_i$ denotes the $i$th fundamental weight ($i=1,\ldots,n-1$). One can check that the map
                        $\lambda\mapsto\mu[\lambda]$ is a~bijective correspondence between the set ${\mathcal P}(n)$ and the set $\Lambda^+ (\textbf{S})$ of dominant weights in $\Lambda(\textbf{S})$ (Proposition~\ref{P4.4}). We view $\lambda\in\text{Part}(\binom{n}{2};n)$ as a~Young diagram. For $p\in\{1,\ldots,n\}$, we call $p$-marking a~subset $\beta\subset\lambda$ of boxes such that: $\beta$ contains exactly $n-1$ boxes; $\beta$ contains all the boxes of the $p$-th row of $\lambda$ and in the other rows of $\lambda$, only the rightmost box may belong to $\beta$ (Notation~\ref{notation-markings}). We denote by $M_p(\lambda)$ the set of $p$-markings of $\lambda$.
                        
                        \begin{thmB}[see Theorem~\ref{P2}]
                                When $\fh=\ft$ is a~Cartan subalgebra of $\fg=\mathfrak{sl}(n,\mathbb{C})$, with the above notation, we have \rm
                                \begin{eqnarray*}
                                        \text{mult}_{\spin}(\mu[\lambda])=\sum_{\beta\in M_p(\lambda)}\text{mult}_{\spin}(\mu[\lambda-\beta]).
                                \end{eqnarray*}
                        \end{thmB}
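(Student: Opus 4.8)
The plan is to obtain Theorem~B from Theorem~A applied to $c=(1,\ldots,1)$ ($n$ ones) --- the sequence attached to the Cartan subalgebra $\fh=\ft$ --- and then to transcribe the resulting identity through the bijection $\lambda\mapsto\mu[\lambda]$ of Proposition~\ref{P4.4}. Two preliminary observations are needed. First, since $\fh=\ft$ one has $\Phi(\fq)=\Phi$, hence $W'=W(\fg,\ft)$; thus spin multiplicities, equivalently the functions $\text{mult}_{(1,\ldots,1)}$, are invariant under all permutations of the coordinates $\epsilon_i$. Second, expanding $\mu[\lambda]=\sum_{i=1}^{n-1}(\lambda_i-\lambda_{i+1})\varpi_i$, the coefficient of $\epsilon_j$ is $\lambda_j-\lambda_n$, and subtracting the mean $\tfrac{n-1}{2}-\lambda_n$ of these coefficients gives, in the normalization where the coordinates sum to $0$,
\[
\mu[\lambda]=\sum_{i=1}^n\Bigl(\lambda_i-\tfrac{n-1}{2}\Bigr)\epsilon_i,\qquad\text{so}\qquad\text{mult}_{\spin}(\mu[\lambda])=\text{mult}_{(1,\ldots,1)}\Bigl(\lambda_1-\tfrac{n-1}{2},\ldots,\lambda_n-\tfrac{n-1}{2}\Bigr).
\]
The same computation for $\mathfrak{sl}(n-1,\mathbb{C})$ shows that a weight with coordinate vector $(\nu_i-\tfrac{n-2}{2})_{1\le i\le n-1}$, with $\nu$ weakly decreasing and $\nu_1+\cdots+\nu_{n-1}=\binom{n-1}{2}$, has multiplicity $\text{mult}_{\spin}(\mu[\nu])$, which vanishes as soon as $\nu\notin\mathcal{P}(n-1)$ (then $\mu[\nu]$ is dominant but not in $\Lambda^+(\spin)$, by Proposition~\ref{P4.4}).

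Next I would apply Theorem~A with $c=(1,\ldots,1)$. By the permutation invariance above, $\text{mult}_{(1,\ldots,1)}$ is symmetric, so Theorem~A may be used with the coordinate $b_p=\lambda_p-\tfrac{n-1}{2}$ playing the role of $b_1$; the reduced sequence $(0,1,\ldots,1)$ is read as the Cartan case of $\mathfrak{sl}(n-1,\mathbb{C})$ (a block of size $0$ being immaterial), and $b_p+\tfrac{n-1}{2}=\lambda_p$. This yields
\[
\text{mult}_{\spin}(\mu[\lambda])=\sum_{J\in\mathcal{P}_{\lambda_p}(\{1,\ldots,n\}\setminus\{p\})}\text{mult}_{\spin}\Bigl(\bigl(\lambda_i+\mathbf{1}_J(i)-\tfrac{n}{2}\bigr)_{i\neq p}\Bigr),
\]
the right-hand multiplicities being those of the spin module of $\mathfrak{sl}(n-1,\mathbb{C})$.

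It then remains to match this sum with the one in Theorem~B. Writing $S=(\{1,\ldots,n\}\setminus\{p\})\setminus J$, a set of $n-1-\lambda_p$ indices, the tuple in the last display equals $\bigl((\lambda_i-\mathbf{1}_S(i))-\tfrac{n-2}{2}\bigr)_{i\neq p}$. If every index in $S$ labels a nonempty row of $\lambda$, then removing all boxes of the $p$-th row together with the rightmost box of each row indexed by $S$ is exactly a $p$-marking $\beta\in M_p(\lambda)$ (it has $\lambda_p+(n-1-\lambda_p)=n-1$ boxes), the multiset $\{\lambda_i-\mathbf{1}_S(i):i\neq p\}$ equals $\lambda-\beta$, and by the dictionary (using the symmetry of $\text{mult}_{(1,\ldots,1)}$) the summand is $\text{mult}_{\spin}(\mu[\lambda-\beta])$; moreover $J\mapsto S\mapsto\beta$ is a bijection from these admissible sets $J$ onto $M_p(\lambda)$. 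If instead some $i\in S$ has $\lambda_i=0$, then $\lambda_i-\mathbf{1}_S(i)=-1$, so the corresponding $\nu$ has a negative part and the summand vanishes; such $J$ contribute nothing --- which matches the fact that they define no $p$-marking. Collecting the surviving terms gives $\text{mult}_{\spin}(\mu[\lambda])=\sum_{\beta\in M_p(\lambda)}\text{mult}_{\spin}(\mu[\lambda-\beta])$.

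The one genuinely delicate step is this last matching: one must verify carefully that $J\mapsto\beta$ is a bijection onto the set of $p$-markings, that the inadmissible sets $J$ contribute $0$, and that the normalizations $\tfrac{n-1}{2}$ and $\tfrac{n-2}{2}$ dovetail correctly in the passage from $\mathfrak{sl}(n,\mathbb{C})$ to $\mathfrak{sl}(n-1,\mathbb{C})$; everything else is a direct transcription of Theorem~A.
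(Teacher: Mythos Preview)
Your proof is correct and follows essentially the same route as the paper's: both arguments specialize Theorem~A to $c=(1,\ldots,1)$, use the $W$-invariance of $\mathrm{mult}_{(1,\ldots,1)}$ to bring the $p$-th coordinate into the first slot, and then identify the index set $\mathcal{P}_{\lambda_p}(\{1,\ldots,n\}\setminus\{p\})$ with $M_p(\lambda)$ via the complement $S$. If anything, you are slightly more careful than the paper in explicitly isolating the ``inadmissible'' subsets $J$ (those whose complement $S$ hits an empty row of $\lambda$) and checking that their summands vanish; the paper's claimed bijection $M_p(\lambda)\cong\mathcal{P}_{\tilde\lambda_1}(\{2,\ldots,n\})$ tacitly assumes this.
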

                        
                        We point out two consequences of Theorem B. Suppose that $\nu\in\text{Part}(\binom{m}{2};m)$ and $\pi\in~\text{Part}(\binom{p}{2};p)$ with $\pi_1\leq p+\nu_m$, so that 
                        \[\lambda:=(p+\nu_1,\ldots,p+\nu_m,\pi_1,\ldots,\pi_p)\in\text{Part}\left(\binom{m+p}{2};m+p\right).\]
                        Then we obtain in Proposition~\ref{P-5.13} that $\lambda\in{\mathcal P}(n)$ if, and only if, $\nu\in{\mathcal P}(m)$ and $\pi\in{\mathcal P}(p)$, with
                        \begin{equation*}
                                \text{mult}_{\spin}(\mu[\lambda])=\text{mult}_{\spin}(\mu[\nu])\times\text{mult}_{\spin}(\mu[\pi]).
                        \end{equation*}
                        Another consequence arises when the weight $\mu=\rho(\mathfrak{g})-\alpha$ is a~shift of $\rho(\mathfrak{g})$ by a~single positive root $\alpha\in\Phi^+ ({\mathfrak q})$. In this case, Proposition~\ref{P-5.15} establishes that the multiplicity of $\mu$ is a~specific power of $2$.
                        
                        \subsection{Main results involving enumerative formulas through suitable sets of tableaux}\label{section-1.4}
                        We obtain an enumerative formula for multiplicities in the following two opposite extreme cases in $\fg=\mathfrak{sl}(n,\mathbb{C})$:
                        \begin{enumerate}[(a)]
                                \item\label{sln_a} the case of a~Cartan subalgebra $\mathfrak{h}=\mathfrak{t}$,
                                \item\label{sln_b} the case where $\fh$ is the Levi subalgebra of a~maximal parabolic subalgebra $\fp$.
                        \end{enumerate}
                        In case \ref{sln_a}, we introduce the notion of spin tableau. 
                        
                        For $n\geq 1$ and $\lambda=(\lambda_1,\ldots,\lambda_n)\in{\mathcal P}(n)$, a~spin tableau $\tau$ of shape $\lambda$ is a~tableau satisfying the following conditions (Definition~\ref{D-T}):
                        \begin{itemize}
                                \item for every $i\in\{1,\ldots,n-1\}$, $\tau$ contains $i$ boxes of entry $i$, all located within the first $i+1$ rows,
                                \item the rows (resp. columns) of $\tau$ are nondecreasing from left to right (resp. top to bottom), moreover on the $i$-th row, the entries which are at least equal to $i$ are increasing.
                        \end{itemize}
                        For example, there are two spin tableaux of shape $\lambda=(4,2,2,1,1)$, namely
                        \[
                        \young(1234,23,34,4,4)\quad\mbox{and}\quad\young(1234,24,34,3,4).
                        \]
                        To every spin tableau $\tau$, we attach an explicit integer $N_\tau$ (see Definition~\hyperref[D-Tb]{4.9(b)}).
                        
                        \begin{thmC}[see Theorem~\ref{theorem-tableaux}]
                                When $\fh=\ft$ is a~Cartan subalgebra of $\fg=\mathfrak{sl}(n,\mathbb{C})$, we have
                                \begin{equation*}
                                        \text{\rm mult}_{\spin}(\mu[\lambda])=\sum_{\tau\in\mathcal{ST}(\lambda)}N_\tau
                                \end{equation*}
                                where $\mathcal{ST}(\lambda)$ denotes the set of spin tableaux of shape $\lambda$.
                        \end{thmC}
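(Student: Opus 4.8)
The plan is to deduce Theorem C from the inductive formula of Theorem B by carefully iterating the $p$-marking recursion and organizing the resulting chains of markings into spin tableaux. First I would unwind Theorem B all the way down to the empty shape: starting from $\lambda\in\mathcal P(n)$, one repeatedly removes an $(n-1)$-marking $\beta_{n-1}\in M_p(\lambda)$ to reach a shape $\lambda^{(n-1)}:=\lambda-\beta_{n-1}$ lying in $\mathrm{Part}(\binom{n-1}{2};n-1)$, then removes an $(n-2)$-marking, and so on, until one reaches the unique partition of $\binom{1}{2}=0$, whose multiplicity is $1$. The key observation is that each marking $\beta_i$ consists of the full $p_i$-th row together with at most one extra box (the rightmost box) in each other row, so recording, for $i=n-1,n-2,\ldots,1$, the row index $p_i$ and the set of rows contributing an extra box is exactly the data of a filling of $\lambda$: a box removed as part of the ``full row'' at step $i$ gets entry $i$, and since the full row at step $i$ sits inside the first $i+1$ rows of the current shape, this matches the first bullet of Definition~\ref{D-T} (each entry $i$ occurs $i$ times and only in the first $i+1$ rows). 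I would then check that the monotonicity constraints in the definition of a spin tableau are precisely the constraints guaranteeing that the intermediate shapes $\lambda^{(i)}$ remain honest partitions in $\mathrm{Part}(\binom{i}{2};i)$ and that each $\beta_i$ is a legitimate $p_i$-marking — this correspondence between ``marking chains'' and spin tableaux is the combinatorial heart of the argument.

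Next I would account for multiplicities. Iterating Theorem B gives
\[
\text{mult}_{\spin}(\mu[\lambda])=\sum_{(\beta_{n-1},\ldots,\beta_1)}1,
\]
where the sum is over all valid marking chains, but Theorem B as stated has a choice of $p$ at each level (the formula holds for \emph{every} $p$, not summed over $p$). So I must be careful: fixing one convenient value of $p$ at each step — say always choosing $p$ to be the smallest admissible row, or the row forced by the shape — collapses the naive count. The integer $N_\tau$ is designed to absorb exactly this: $N_\tau$ counts, for the tableau $\tau$, the number of marking chains that produce $\tau$ under the canonical choice, equivalently the product over steps of the number of ``free'' extra-box positions that are not pinned down by $\tau$. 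I would unpack Definition~\ref{D-Tb} to confirm that $N_\tau$ is precisely this multiplicity factor, so that summing $N_\tau$ over $\mathcal{ST}(\lambda)$ recovers the total count of marking chains, which equals $\text{mult}_{\spin}(\mu[\lambda])$. Proposition~\ref{P4.4} guarantees $\mu[\lambda]$ ranges over all dominant spin weights, and $W(\fh,\ft)$-invariance reduces the general weight to the dominant case, so the theorem follows for all weights.

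The main obstacle I anticipate is establishing the bijection between marking chains and spin tableaux rigorously — in particular, verifying that the two monotonicity conditions in Definition~\ref{D-T} are equivalent to the well-definedness of the chain (each $\lambda^{(i)}$ a partition, each $\beta_i$ an $i$-marking with $\binom{i}{2}$ boxes removed at the right total rate). One subtle point is the clause ``on the $i$-th row, the entries which are at least $i$ are increasing'': this reflects the order in which boxes of row $i$ get filled as we peel off markings $\beta_{n-1},\ldots,\beta_i$, and I would need an induction on $n$ (or on the number of peeling steps) to show that admissible fillings are exactly those meeting this condition. A secondary technical issue is bookkeeping the constant shift $-\tfrac12$ and the translation of $\mu[\lambda-\beta]$ back into a partition in $\mathrm{Part}(\binom{i}{2};i)$; this is routine given the explicit formula $\mu[\lambda]=\sum(\lambda_i-\lambda_{i+1})\varpi_i$, but it must be tracked so that the base case $\binom{1}{2}=0$, multiplicity $1$, is reached cleanly. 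Once the bijection and the identification $N_\tau=\#\{\text{chains producing }\tau\}$ are in place, the theorem is immediate from Theorem B by induction on $n$.
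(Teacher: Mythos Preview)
Your overall strategy is the paper's: deduce the formula from Theorem~B by induction on $n$, with $N_\tau$ absorbing the multiplicity with which each tableau arises. But you are missing the one concrete choice that makes everything click, and this produces the confusion in your second paragraph. The paper applies Theorem~B with $p=n$ (the \emph{last} row) at each step, not ``the smallest admissible row''. With that choice the key observation is immediate: for any $\tau\in\mathcal{ST}(\lambda)$, the set $\beta(\tau)$ of boxes carrying entry $n-1$ is precisely an $n$-marking (row $n$ can contain only $(n-1)$'s by the first bullet of Definition~\ref{D-T}; each row $j<n$ contains at most one $n-1$, necessarily rightmost, by the second bullet). Deleting $\beta(\tau)$ leaves $\tau|_{<n-1}\in\mathcal{ST}(\lambda-\beta(\tau))$, and one checks that its row lengths are already nonincreasing, so no re-sorting is needed and the induction is clean. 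With $p=1$ this fails outright: row $1$ of a spin tableau carries \emph{distinct} entries, so no single entry value gives a $1$-marking.

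Your reading of $N_\tau$ as ``the number of chains that produce $\tau$'' is also off. Exactly one chain literally produces $\tau$; what $N_\tau$ counts is the number of chains with the same sequence of residual \emph{partitions}. Concretely, two $n$-markings $\beta,\beta'$ satisfy $\lambda-\beta=\lambda-\beta'$ iff they remove the same number of boxes from each column of rows $1,\ldots,n-1$; in column $j$ there are $a_{n-1,j}$ row-ending boxes available and $\beta(\tau)$ selects $b_{n-1,j}$ of them, so the number of $n$-markings with the same residual as $\beta(\tau)$ is $\prod_j\binom{a_{n-1,j}}{b_{n-1,j}}$, exactly the top layer of $N_\tau$. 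This gives the recursion $N_\tau=N_{\tau|_{<n-1}}\cdot\prod_j\binom{a_{n-1,j}}{b_{n-1,j}}$, after which grouping $\mathcal{ST}(\lambda)$ by the value of $\beta(\tau)$ collapses $\sum_\tau N_\tau$ to $\sum_{\beta\in M_n(\lambda)}N_{\lambda-\beta}=N_\lambda$ via Theorem~B and the induction hypothesis.
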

                        
                        In Section~\ref{section6}, we consider the case \ref{sln_b} where ${\mathfrak h}$ is the Levi factor of a~maximal parabolic subalgebra in ${\mathfrak g}={\mathfrak s}{\mathfrak l}(n,{\mathbb C})$. In other words, me may assume that
                        \[
                        \mathfrak{h}=\left\{
                        \begin{pmatrix}
                                x & 0 \\ 0 & y
                        \end{pmatrix}
                        :x\in\gl(p,\mathbb{C}),\ y\in\gl(q,\mathbb{C})\right\}
                        \]
                        with $p+q=n$. In particular, every weight in $\Lambda(\textbf{S})$ can be obtained from a~weight in
                        \[
                        \Lambda^\prime(\textbf{S}):=\{\mu=(\mu_1,\ldots,\mu_p,\nu_1,\ldots,\nu_q)\in\Lambda(\textbf{S})\; :\; \mu_1\geq\cdots\geq\mu_p,\;\;\nu_1\geq\cdots\geq\nu_q\}
                        \]
                        under the action of $W({\mathfrak h},{\mathfrak t})=\mathfrak{S}_p\times\mathfrak{S}_q$ (see Remark~\ref{lambdaprime}).
                        Then we show that the set $\Lambda'(\spin)$ is in bijection with the following set of pairs of partitions
                        \begin{gather*}
                                \mathcal{P}':=\{(\alpha,\beta):\alpha,\beta \text{ partitions of } m\leq n \text{ with at most } q \text{, resp. } p \text{ parts,} \\* 
                                \text{ and such that } \beta\preceq{}^t \alpha \}
                        \end{gather*}
                        where again $\preceq$ stands for the dominance order on partitions
                        (see Corollary~\ref{C5.3}).
                        More precisely, the bijection is given by
                        \begin{equation*}
                                (\alpha,\beta)\mapsto\mu(\alpha,\beta):=\left(\alpha_1-\frac{q}{2},\ldots,\alpha_p-\frac{q}{2},\frac{p}{2}-\beta_q,\ldots,\frac{p}{2}-\beta_1\right).
                        \end{equation*}
                        We then express the multiplicity of the weight $\mu(\alpha,\beta)$ in terms of tableaux as follows. Following Definition~\ref{RTtableau}, a~row-tableau of shape $\alpha$ and weight $\beta$ is a~numbering of the boxes of $\alpha$ (viewed as a~Young diagram) such that the numbering comprises $\beta_i$ occurrences of $i$ for all integer $i$, and such that the entries increase from left to right along the rows.
                        
                        \begin{thmD}[see Theorem~\ref{T2-new}]
                                For $(\alpha,\beta)\in\mathcal{P}'$ as above, we have
                                \begin{equation*}
                                        \mult_{\spin}(\mu(\alpha,\beta))=\# RT(\alpha,\beta),
                                \end{equation*}
                                where $RT(\alpha,\beta)$ denotes the set of row-tableaux of shape $\alpha$ and weight $\beta$.
                        \end{thmD}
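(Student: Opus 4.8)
The plan is to reduce Theorem~D to a purely combinatorial identity by unwinding the definition of $\mathrm{mult}_{\spin}(\mu(\alpha,\beta))$ from~\eqref{spinweight}. Recall that $\fh$ is the Levi factor of a maximal parabolic, so $\Phi^+({\mathfrak q})=\{\epsilon_i-\epsilon_{p+j}:1\le i\le p,\ 1\le j\le q\}$, which we may identify with the $p\times q$ grid of boxes (row $i$, column $j$). A subset $A\subset\Phi^+({\mathfrak q})$ is then a subset of this grid; writing $a_i$ for the number of boxes of $A$ in row $i$ and $d_j$ for the number in column $j$, the weight $\mu(A)=\frac12(\sum_{\alpha\notin A}\alpha-\sum_{\alpha\in A}\alpha)$ has $i$-th coordinate $\frac12(q-a_i)-\frac12 a_i=\frac q2-a_i$ for $i\le p$, and similarly its $(p+j)$-th coordinate equals $d_j-\frac p2$. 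So after imposing the dominance (weak decrease) condition defining $\Lambda'(\spin)$ and comparing with the stated bijection $(\alpha,\beta)\mapsto\mu(\alpha,\beta)$, we get $a_i=q-\alpha_i$ (hence the row counts of $A$ are $q-\alpha_1\le\cdots\le q-\alpha_p$, i.e. weakly increasing) and $d_j=\frac p2-(\frac p2-\beta_{q-j+1})$, i.e. the column counts of $A$, read in the appropriate order, are the parts of $\beta$. Thus $\mathrm{mult}_{\spin}(\mu(\alpha,\beta))$ counts the $0/1$ matrices (equivalently subsets $A$ of the $p\times q$ grid) whose row sums are the complement $q-\alpha_i$ and whose column sums are $\beta_j$.

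The next step is to convert such a subset $A$ into its complement $A^c$ inside the grid: then $A^c$ has row sums $\alpha_i$ and column sums $q-\beta_j$ (again up to reordering the columns). It is cleaner to work with $A^c$ row by row: the set of $0/1$ matrices with prescribed row sums $\alpha_1,\dots,\alpha_p$ and prescribed column sums is in bijection with the set of ways to fill the Young diagram $\alpha$ (row $i$ has $\alpha_i$ boxes) by choosing, for each row independently and then recording which columns are occupied. The key observation is that recording in each box the column-index of its position, and then sorting, is exactly the data of a row-tableau: a numbering of $\alpha$ where row $i$ receives $\alpha_i$ entries drawn from $\{1,\dots,q\}$, strictly increasing along the row (since the columns occupied in a given row are distinct), and where the total number of entries equal to $j$ is the $j$-th column sum. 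Matching column sums to $\beta$ requires a small index bookkeeping (the columns of $A$ read bottom-to-top correspond, via $\mu(\alpha,\beta)$, to $\beta_q,\dots,\beta_1$), but this is just relabelling $j\mapsto q+1-j$, and under that relabelling a strictly-increasing-along-rows numbering stays strictly-increasing-along-rows. Hence the count of such $A$ equals $\#RT(\alpha,\beta)$, which is precisely the right-hand side of Theorem~D.

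Finally I would package this: (i) cite Corollary~\ref{C5.3} / the established bijection $(\alpha,\beta)\mapsto\mu(\alpha,\beta)$ and Remark~\ref{lambdaprime} to legitimately replace a general weight by its $\Lambda'(\spin)$-representative, so that $\mathrm{mult}_{\spin}$ is computed by counting $A\subset\Phi^+({\mathfrak q})$ with $\mu(A)=\mu(\alpha,\beta)$; (ii) carry out the coordinate computation above to translate the single vector equation $\mu(A)=\mu(\alpha,\beta)$ into the row-sum and column-sum constraints — this uses crucially that $\mu(A)$ depends on $A$ only through its row and column counts, and that those counts are forced to be $\alpha$-data and $\beta$-data with no extra freedom; (iii) give the explicit bijection $A\leftrightarrow$ row-tableau of shape $\alpha$ and weight $\beta$ described above, checking injectivity and surjectivity (both immediate once the dictionary is set up). The main obstacle I anticipate is purely one of indices and normalization: getting the direction of the dominance inequalities right, confirming that the weak-decrease condition cutting out $\Lambda'(\spin)$ forces $\alpha$ and $\beta$ to be genuine partitions and that the constraint $\beta\preceq{}^t\alpha$ appearing in $\mathcal{P}'$ is exactly the Gale--Ryser feasibility condition for the existence of at least one such $0/1$ matrix (so that the count is nonempty precisely on $\mathcal{P}'$). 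Once those normalizations are pinned down, the bijection with row-tableaux is essentially formal.
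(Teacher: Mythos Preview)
Your plan is essentially the paper's proof. The paper first shows (Lemma~\ref{L5.1-new}, via the matrix encoding of Section~\ref{S4.1}) that $\mult_\spin(\mu(\alpha,\beta))=\#\mathcal{M}_{p,q}(\alpha,\beta)$, the number of $(p,q)$-sized $0/1$ matrices with row sums $\alpha_i$ and column sums $\beta_j$, and then bijects this set with $RT(\alpha,\beta)$ by exactly your map ``record in row $i$ the column indices where the entry is $1$''. Your explicit passage to the complement $A^c$ is already built into the paper's encoding (the upper-right block $(a_{i,j})$ of a matrix in $\mathcal{M}(\fq)$ is the indicator of $\Phi^+(\fq)\setminus A$), and your Gale--Ryser remark corresponds to Proposition~\ref{L5-new}.

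One correction, though: your column-sum computation has a sign slip. From $d_j-\tfrac{p}{2}=\tfrac{p}{2}-\beta_{q+1-j}$ one obtains $d_j=p-\beta_{q+1-j}$, not $\beta_{q+1-j}$; so the column counts of $A$ are \emph{not} the parts of $\beta$. Consequently the column sums of $A^c$ are $p-d_j=\beta_{q+1-j}$, not $q-\beta_j$ (the latter is also dimensionally off, since each column of the grid has $p$ entries). With this fix the ``index bookkeeping'' you anticipate is precisely the relabelling $j\mapsto q+1-j$, after which $A^c$ lands in $\mathcal{M}_{p,q}(\alpha,\beta)$ and the bijection with $RT(\alpha,\beta)$ goes through exactly as you describe.
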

                        
                        \noindent
                        For instance, given $p=q=3$ and $n=6$, we have
                        \[
                        \mult_{\spin}(\mu((3,2,1),(2,2,2)))=\#\left\{\young(123,12,3),\ \young(123,13,2),\ \young(123,23,1)\,\right\}=3.
                        \]

                        Our general approach for obtaining Theorems A, B, C, and D relies on computational linear algebra, namely encoding weights of the spin module with suitable sets of matrices with coefficients in $\{0,1\}$. For instance, for establishing Theorem D, we first
                        show that
                        \[
                        \text{mult}_{\spin}(\mu(\alpha,\beta))=\#\mathcal{M}_{p,q}(\alpha,\beta)
                        \]
                        where
                        $\mathcal{M}_{p,q}(\alpha,\beta)$ is the set of matrices $a=(a_{i,j})$ of size $(p,q)$ and coefficients in $\{0,1\}$ such that the sums of the coefficients in the $i$-th row (resp. $j$-th column) coincides with the $i$-th part of the partition $\alpha$ (resp. the $j$-th part of $\beta$). Finally, we define a~bijection
                        \[
                        \mathcal{M}_{p,q}(\alpha,\beta)\cong RT(\alpha,\beta).
                        \]
                        We think that the combinatorics of the set $\mathcal{M}_{p,q}(\alpha,\beta)$ is by itself an interesting problem.
                        
                        As a~final point, we mention that the combinatorial formulas that we have obtained involve partitions and sets of tableaux. Since it is known that nilpotent orbits of classical Lie algebras can be parametrized with certain partitions (see~\cite{CM}), our formulas suggest a~possible geometric interpretation of multiplicities of the spin module in terms of nilpotent orbits. This will be the aim of a~prospective project.
                        
                        \subsection*{Notation} In the sequel, we keep the notation of Sections~\ref{section-1.2}--\ref{section-1.4}. The base field is $\mathbb{C}$ and by $\mathfrak{gl}(n)$, $\mathfrak{sl}(n)$,\dots we denote the classical Lie algebras $\mathfrak{gl}(n,\mathbb{C})$, $\mathfrak{sl}(n,\mathbb{C})$,\dots
                        
                        For a~set $\mathbb{K}$, we denote by $\mathcal{M}_{p,q}(\mathbb{K})$ the set matrices of size $(p,q)$ and coefficients in $\mathbb{K}$; we write for simplicity $\mathcal{M}_p(\mathbb{K})=\mathcal{M}_{p,p}(\mathbb{K})$.
                        By $\mathfrak{S}_n$ we denote the symmetric group of permutations of $\{1,\ldots,n\}$. By a~partition of $n$ we mean a~nonincreasing sequence of positive integers $\lambda=(\lambda_1\geq\cdots\geq\lambda_k)$ such that $\lambda_1+\cdots+\lambda_k=n$; then we write $\lambda\vdash n$. By $\#A$ we denote the cardinal of a~set $A$. Other notation will be introduced in the text in the due places.
                        An index of notation can be found at the end of the paper.

                        \section{Weights of the spin module in the case of a~Levi subalgebra}
                        
                        We use the notation of Section~\ref{section-1.2}. In particular, we suppose that $\mathfrak{h}$ is a~Levi subalgebra of $\mathfrak{g}$ that contains $\mathfrak{t}$, and we consider the spin module $\mathbf{S}$ associated to $\mathfrak{h}$ and its set of weights $\Lambda(\mathbf{S})$ (see \eqref{spinweight}).
                        In this section, we give general properties of weights which will be required in the sequel. In general, the action of the Weyl group $W=W(\mathfrak{g},\mathfrak{t})$ does not restrict to $\Lambda(\spin)$ and, in Section~\ref{section-2.1}, we point out a~subgroup $W'\subset W$
                        that does act on $\Lambda(\spin)$ by preserving the multiplicities.
                        In Section~\ref{section-2.2}, we describe inductive properties of multiplicities relative to Levi subalgebras $\fl\subset\fh$. In Section~\ref{section-2.3}, we give general properties in the case where $\fh=\ft$; for instance, we characterize the weights of multiplicity one.
                        
                        \subsection{Action of Weyl group elements}\label{S3.1}\label{section-2.1}
                        The Weyl group $W$ acts by conjugation on the standard Levi subalgebras of $\mathfrak{g}$ and this induces an action on the collection of corresponding spin modules. Specifically,
                        given $w\in\nolinebreak W$, we can consider the Levi subalgebra ${}^w \fh:=w\fh w^{-1}$
                        and the decomposition $\fg=\nolinebreak{}^w \fh\oplus\nolinebreak{}^w \fq$ where
                        \[
                        {}^w \fq=w\fq w^{-1}=\bigoplus_{\alpha\in w(\Phi(\fq))}\fg_\alpha\quad\mbox{and}\quad {}^w \fh=\ft\oplus\bigoplus_{\alpha\in w(\Phi(\fh))}\fg_\alpha.
                        \]
                        We denote by ${}^w \mathbf{S}$ the spin module relative to ${}^w \fh$ and ${}^w \fq$ in the sense of Section~\ref{section-1.2}.
                        
                        Moreover, we define
                        \[
                        W'=\{w\in W:w(\Phi(\mathfrak{q}))=\Phi(\mathfrak{q})\},
                        \]
                        which is a~subgroup of $W$, containing $W(\mathfrak{h},\mathfrak{t})$, and such that
                        \[
                        w\in W'\quad\Rightarrow\quad {}^w \fh=\fh,\ {}^w \fq=\fq,\mbox{ and }{}^w \spin=\spin.
                        \]
                        
                        \begin{example}\label{E1}
                                
                                \begin{enumerate}[(a)]
                                        \item\label{E1a} If $\mathfrak{h}=\mathfrak{t}$, then $\Phi(\fq)=\Phi$ and $W'=W$.
                                        \item\label{E1b} If $\mathfrak{h}=\mathfrak{gl}(p)\times\mathfrak{gl}(q)\subset\mathfrak{gl}(p+q)=\mathfrak{g}$, then $\Phi(\mathfrak{q})=\{\pm(\epsilon_i-\epsilon_j):1\leq i\leq p<j\leq p+q\}$ and $W(\mathfrak{h},\mathfrak{t})\cong\mathfrak{S}_p\times\mathfrak{S}_q$.
                                        We have
                                        \begin{eqnarray*}
                                                W' & = & \{w\in\mathfrak{S}_{p+q}:w(\{1,\ldots,p\})=\{1,\ldots,p\}\ \mbox{or}\ \{p+1,\ldots,p+q\}\} \\
                                                & = & \left\{
                                                \begin{array}
                                                        {ll}
                                                        W(\fh,\ft) & \mbox{if $p\not=q$}, \\
                                                        W(\mathfrak{h},\ft)\rtimes\langle\sigma\rangle & \mbox{if $p=q$},
                                                \end{array}
                                                \right.
                                        \end{eqnarray*}
                                        where $\sigma=
                                        \begin{pmatrix}
                                                1 & \cdots & p & p+1 & \cdots & 2p \\ p+1 & \cdots & 2p & 1 & \cdots & p
                                        \end{pmatrix}
                                        $.
                                \end{enumerate}

                        \end{example}
                        
                        \begin{lemma}\label{L-w-Levi}
                                \begin{enumerate}[(a)]
                                        \item For every $w\in W$, we have $w(\Lambda(\mathbf{S}))=\Lambda({}^w \mathbf{S})$. Moreover, for all $\mu\in\Lambda(\mathbf{S})$, the weights $\mu$ (for $\mathbf{S}$) and $\mu':=w(\mu)$ (for ${}^w \mathbf{S}$) have the same multiplicity.
                                        \item In particular, the set $\Lambda(\mathbf{S})$ is stable under $W'$ and, moreover, for every $\mu\in\Lambda(\mathbf{S})$, $w\in W'$, we have $\mult_\spin(w(\mu))=\mult_\spin(\mu)$.
                                        \item For every $\mu\in\Lambda(\mathbf{S})$, we have $-\mu\in\Lambda(\mathbf{S})$, and $\mult_\spin(-\mu)=\mult_\spin(\mu)$.
                                \end{enumerate}
                        \end{lemma}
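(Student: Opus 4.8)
The plan is to prove the three parts of Lemma~\ref{L-w-Levi} in order, with part (a) doing most of the work and parts (b)--(c) following as corollaries.

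\emph{Part (a).} First I would unwind the definition of $\Lambda(\mathbf{S})$ in \eqref{spinweight}. Fixing $w\in W$, the positive system $\Phi^+(\fq)$ for $\fq$ need not be $w$-stable, but if we set $\Psi:=w(\Phi^+(\fq))\subset w(\Phi(\fq))=\Phi({}^w\fq)$, then $\Psi$ is a choice of ``positive half'' of $\Phi({}^w\fq)$ (it contains exactly one of $\pm\alpha$ for each pair), and $\rho$-shifts of the form appearing in \eqref{spinweight} do not depend on which positive half one picks: indeed for any subset $A\subset\Phi^+(\fq)$ one has $\mu(A)=\rho(\fq)-\sum_{\alpha\in A}\alpha$ with $\rho(\fq)=\frac12\sum_{\alpha\in\Phi^+(\fq)}\alpha$, and replacing some positive roots by their negatives just reshuffles which subset is being summed. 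Concretely, for $A\subset\Phi^+(\fq)$ put $A':=w(A)\cap\Phi^+({}^w\fq)$ together with the ``flipped'' part; the cleanest formulation is that $w$ induces a bijection on power sets $A\mapsto w(A)\subset\Psi$ and that $w(\mu(A))$ is exactly the weight attached to $w(A)$ in the spin module for ${}^w\fq$ (computed with the positive half $\Psi$, hence with any positive half). This gives $w(\Lambda(\mathbf{S}))=\Lambda({}^w\mathbf{S})$ as sets. For the multiplicity statement, the same bijection $A\mapsto w(A)$ is compatible with the fibers: $\mu(A)=\mu(B)$ in $\mathbf{S}$ iff $w(\mu(A))=w(\mu(B))$ (since $w$ is a linear isomorphism of $\ft_{\mathbb R}^*$), iff the weight attached to $w(A)$ equals that attached to $w(B)$ in ${}^w\mathbf{S}$; so the fiber over $\mu$ and the fiber over $w(\mu)$ have the same cardinality, which is the assertion $\mult_{\mathbf S}(\mu)=\mult_{{}^w\mathbf S}(w(\mu))$.

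\emph{Part (b).} If $w\in W'$ then $w(\Phi(\fq))=\Phi(\fq)$, so ${}^w\fq=\fq$, ${}^w\fh=\fh$, and ${}^w\mathbf{S}=\mathbf{S}$ (this is exactly the displayed implication recorded just before the lemma). Substituting ${}^w\mathbf{S}=\mathbf{S}$ into part (a) gives $w(\Lambda(\mathbf{S}))=\Lambda(\mathbf{S})$ and $\mult_\spin(w(\mu))=\mult_\spin(\mu)$ for all $\mu\in\Lambda(\mathbf{S})$, $w\in W'$.

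\emph{Part (c).} I would exhibit an explicit element realizing $\mu\mapsto-\mu$. The longest element $w_0$ of $W(\fg,\ft)$ sends $\Phi^+$ to $-\Phi^+$, hence $\Phi(\fq)$ to $\Phi(\fq)$ (it preserves the set $\Phi(\fq)=\Phi^+(\fq)\sqcup(-\Phi^+(\fq))$ as a whole, just swapping the two halves), so $w_0\in W'$; and $-w_0$ is the map $\mu\mapsto-\mu$ composed with a diagram automorphism... which is not quite what we want directly. The simpler route: apply part (b) to $w=w_0\in W'$ to get that $w_0(\mu)\in\Lambda(\mathbf S)$ has multiplicity $\mult_\spin(\mu)$; and separately observe directly from \eqref{spinweight} that $-\mu(A)=\mu(\Phi^+(\fq)\setminus A)$, the complement, so $A\mapsto\Phi^+(\fq)\setminus A$ is an involution of the power set carrying $\mu(A)$ to $-\mu(A)$ and preserving fibers; hence $-\mu\in\Lambda(\mathbf S)$ and $\mult_\spin(-\mu)=\mult_\spin(\mu)$. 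This complement trick is really the shortest proof of (c) and avoids Weyl-group considerations altogether.

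The main obstacle I anticipate is purely bookkeeping in part (a): making precise the identification of $w(\mu(A))$ with a spin weight for ${}^w\mathbf{S}$ when $w$ does not preserve the chosen positive systems, i.e., checking carefully that the formula \eqref{spinweight} is independent of the choice of positive half of $\Phi(\fq)$ and that this independence is exactly what lets $w$ act. Once that independence is cleanly stated (it is elementary: $\frac12(\sum_{\alpha\notin A}\alpha-\sum_{\alpha\in A}\alpha)$ over a positive half, summed suitably, is insensitive to flipping signs of roots provided one flips the membership in $A$ accordingly), everything else is a transport-of-structure along the linear isomorphism $w$.
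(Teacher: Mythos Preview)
Your approach is essentially the same as the paper's: for (a) you transport subsets along $w$ while accounting for the change of positive half (the paper does this by writing down an explicit formula for the subset $w(A)\subset\Phi^+({}^w\fq)$ and computing directly that $w^{-1}\cdot\mu(w(A))=\mu(A)$, which is exactly your ``flip membership when the sign flips'' recipe); (b) follows immediately; and for (c) both you and the paper use the complement involution $A\mapsto\Phi^+(\fq)\setminus A$.

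One correction to your aside in (c): the claim that $w_0\in W'$ is false in general. From $w_0(\Phi^+)=-\Phi^+$ you cannot conclude $w_0(\Phi(\fq))=\Phi(\fq)$; for instance, with $\fg=\mathfrak{sl}(n)$ and $\fh=\fh(p,q)$, $p\neq q$, the longest element swaps the two blocks and sends $\Phi(\fq)$ to the root set of the complement of a \emph{different} Levi (cf.\ Example~\ref{E1}\,(b), where $W'=W(\fh,\ft)$ when $p\neq q$). Since you ultimately discard the $w_0$ route in favor of the complement argument, this does not affect the proof, but the sentence ``hence $\Phi(\fq)$ to $\Phi(\fq)$ \ldots\ so $w_0\in W'$'' should be deleted.
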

                        
                        \begin{proof}
                                (a) Let $\mu=\mu(A)$ for a~subset $A\subset\Phi^+ (\fq)$. Fix $w\in W$ and set for simplicity $\fq'={}^w \fq$. Let
                                \[
                                w(A):=\{\alpha\in\Phi^+ (\fq'):w^{-1}(\alpha)\in A\quad\mbox{or}\quad -w^{-1}(\alpha)\in \Phi^+ (\fq)\setminus A\},
                                \]
                                and additionally, denote 
                                \begin{align*}
                                        &\Phi^{+}_\setminus=\left\{\alpha\in\Phi^+(\fq'):w^{-1}(\alpha)\in\Phi^+(\fq)\setminus A\right\}, \quad 
                                        \Phi^{+}_\cap=\left\{\alpha\in\Phi^+(\fq'):w^{-1}(\alpha)\in\Phi^+(\fq)\cap A\right\} \\
                                        &\Phi^{-}_\setminus=\left\{\alpha\in\Phi^-(\fq'):w^{-1}(\alpha)\in\Phi^-(\fq)\setminus A\right\}, \quad 
                                        \Phi^{-}_\cap=\left\{\alpha\in\Phi^-(\fq'):w^{-1}(\alpha)\in\Phi^-(\fq)\cap A\right\}.
                                \end{align*}
                                
                                Then
                                \begin{eqnarray*}
                                        2w^{-1}\cdot\mu(w(A)) & = & \sum_{\alpha\in \Phi^+ (\fq')\setminus w(A)}w^{-1}(\alpha)-\sum_{\alpha\in w(A)}w^{-1}(\alpha) \\
                                        & = & \sum_{\alpha \in \Phi^{+}_\setminus}w^{-1}(\alpha)+\sum_{\alpha \in \Phi^{-}_\cap}w^{-1}(\alpha) -\sum_{\alpha \in \Phi^{+}_\cap}w^{-1}(\alpha)-\sum_{\alpha \in \Phi^{-}_\setminus}w^{-1}(\alpha) \\
                                        & = & \sum_{\beta\in\Phi^+ (\fq)\setminus A}\beta-\sum_{\beta\in A}\beta = 2\mu(A)
                                \end{eqnarray*}
                                whence the claimed equality. Finally, the map $A\mapsto w(A)$ establishes a~bijection between the sets
                                \[
                                \left\{A\subset\Phi^+ (\fq):\mu(A)=\mu\right\}\quad\mbox{and}\quad
                                \left\{A'\subset\Phi^+ (\fq'):\mu(A')=w(\mu)\right\},
                                \]
                                whence the equality of multiplicities.
                                
                                (b) is a~consequence of (a) and of the fact that ${}^w \spin=\spin$ whenever $w\in W'$.
                                
                                (c) If $A'=\Phi^+ (\fq)\setminus A$ then
                                \[
                                \mu(A')=\frac{1}{2}\Big(\sum_{\alpha\in A}\alpha-\sum_{\alpha\in A'}\alpha\Big)=-\mu(A).
                                \]
                                The map $A\mapsto \Phi^+ (\fq)\setminus A$ establishes a~bijection
                                \[
                                \{A\subset \Phi^+ (\fq):\mu=\mu(A)\}\stackrel{\sim}{\longrightarrow} \{A'\subset\Phi^+ (\fq):-\mu=\mu(A')\}.
                                \]
                                Whence (c).
                        \end{proof}

                        As shown in the lemma, for studying the weights of the spin modules in the case of Levi subalgebras,
                        one can restrict to standard Levi subalgebras $\mathfrak{h}$ among a~set of representatives of the $W$-conjugacy classes, and for a~fixed $\mathfrak{h}$ one can restrict to considering weights among a~set of representatives of the $W'$-orbits of $\Lambda(\spin)$.
                        
                        \begin{example}\label{E3.4}
                                
                                \begin{enumerate}[(a)]
                                        \item\label{E3.4a}  Every standard Levi subalgebra of $\gl(n)$ or $\mathfrak{sl}(n)$ is conjugated under $W$ to the subspace of blockwise matrices $\fh=\fh(c)$ determined by a~composition $c=(c_1,\ldots,c_k)$ of $n$, such that $c_1\leq\ldots\leq c_k$.
                                        If $n=4$, for instance, we are left with five Levi subalgebras up to conjugation:
                                        \begin{center}
                                                \begin{tabular}
                                                        {|c|c|c|c|c|c|c|c|}
                                                        \hline
                                                        $c$ & $(1^4)$ & $(1^2,2)$ & $(2,2)$ & $(1,3)$ & $(4)$
                                                        \\ \hline
                                                        $\fh(c)$ &
                                                        {\footnotesize $
                                                                \begin{pmatrix}
                                                                        * & 0 & 0 & 0 \\ 0 & * & 0 & 0 \\ 0 & 0 & * & 0 \\ 0 & 0 & 0 & *
                                                                \end{pmatrix}
                                                                $} &
                                                        {\footnotesize $
                                                                \begin{pmatrix}
                                                                        * & 0 & 0 & 0 \\ 0 & * & 0 & 0 \\ 0 & 0 & * & * \\ 0 & 0 & * & *
                                                                \end{pmatrix}
                                                                $} &
                                                        {\footnotesize $
                                                                \begin{pmatrix}
                                                                        * & * & 0 & 0 \\ * & * & 0 & 0 \\ 0 & 0 & * & * \\ 0 & 0 & * & *
                                                                \end{pmatrix}
                                                                $} &
                                                        {\footnotesize $
                                                                \begin{pmatrix}
                                                                        * & 0 & 0 & 0 \\ 0 & * & * & * \\ 0 & * & * & * \\ 0 & * & * & *
                                                                \end{pmatrix}
                                                                $} &
                                                        {\footnotesize $
                                                                \begin{pmatrix}
                                                                        * & * & * & * \\ * & * & * & * \\ * & * & * & * \\ * & * & * & *
                                                                \end{pmatrix}
                                                                $}
                                                        \\
                                                        \hline
                                                \end{tabular}
                                        \end{center}
                                        
                                        \item\label{E3.4b} Let $\fg=\gl(n)$ with $n\geq 3$, $\fh=\mathfrak{h}(1,n-1)\cong \gl(1)\times\gl(n-1)$, thus
                                        \[
                                        \fq=\left(
                                        \begin{array}
                                                {c|ccc}
                                                0 & * & \cdots & * \\ \hline * & 0 & \cdots & 0 \\ \vdots & \vdots & \ddots & \vdots \\ * & 0 & \cdots & 0
                                        \end{array}
                                        \right)\quad\mbox{and}\quad
                                        \Phi^+ (\fq)=\{\epsilon_1-\epsilon_i:i=2,\ldots,n\}.
                                        \]
                                        In this case, according to Example~\hyperref[E1b]{2.1(b)}, we have $W'=W(\fh,\ft)=\mathfrak{S}_{n-1}$.
                                        For $k=0,\ldots,n-1$, we define
                                        \[
                                        A_k=\{\epsilon_1-\epsilon_i:i=2,\ldots,k+1  \}
                                        \]
                                        which is a~subset of $\Phi^+ (\fq)$ with $k$ elements. Conversely, whenever $A=\{\epsilon_{1}-\epsilon_{i_j}\}_{j=1}^k$ is a~subset of $\Phi^+ (\fq)$ with $k$ elements, there is $w\in W'$
                                        such that $A=w(A_k)$, namely
                                        \[
                                        w=
                                        \begin{pmatrix}
                                                1 & 2 & \cdots & k+1 & k+2 & \cdots & n \\ 1 & i_1 & \cdots & i_k & i'_{k+1} & \cdots & i'_{n-1}
                                        \end{pmatrix}
                                        \]
                                        (whenever $\{2,\ldots,n\}=\{i_1,\ldots,i_k\}\sqcup\{i'_{k+1},\ldots,i'_{n-1}\}$).
                                        
                                        The weight associated to $A_k$ is
                                        \[
                                        \mu(A_k)=\frac{1}{2}\Big((n-2k-1)\epsilon_1+\epsilon_2+\ldots+\epsilon_{k+1}-\epsilon_{k+2}-\ldots-\epsilon_n\Big)
                                        \]
                                        and we have
                                        \[
                                        \Lambda(\mathbf{S})=\bigsqcup_{k=0}^{n-1}W'(\mu(A_k)).
                                        \]
                                        
                                        We also note that the map
                                        \[
                                        A=\{\epsilon_{1}-\epsilon_{i_j}\}_{j=1}^k \mapsto \mu(A)=\frac{1}{2}\big((n-2k-1)\epsilon_1+\epsilon_{i_1}+\ldots+\epsilon_{i_k}-\epsilon_{i'_{k+1}}-\ldots-\epsilon_{i'_{n-1}}\big)
                                        \]
                                        is injective. Therefore, we have $\mult_\spin(\mu(A))=1$ for all weight $\mu(A)\in\Lambda(\spin)$.
                                \end{enumerate}
                        \end{example}
                        
                        \subsection{Inductive properties of multiplicities}\label{section-2.2}
                        
                        The standard parabolic subalgebras of $\fg$ can be parametrized by subsets of simple roots.
                        Namely, a~subset $I\subset\Delta$
                        of simple roots gives rise to a~parabolic subalgebra $\mathfrak{p}_I\subset \fg$ and a~standard Levi factor $\mathfrak{l}_I$. We also denote by $\Phi_I:=\Phi\cap\langle I\rangle$ and $\Phi_I^+ :=\Phi^+ \cap\langle I\rangle$ the root system and set of positive roots for $\mathfrak{l}_I$, where $\Phi=\Phi(\fg)$ and $\Phi^+$ are as above.
                        
                        Finally, let $\spin_I=\spin(\fl_I,\fl_I\cap\fh,\fl_I\cap\fq^+)$ be the spin module relative to $\mathfrak{l}_I$ and its Levi subalgebra $\fl_I\cap\fh$. For every subset $A\subset \Phi^+_I(\fq):=\Phi^+_I\cap\Phi^+ (\fq)$, let $\mu_{I}(A)\in\Lambda(\spin_I)$ be the corresponding weight, that is
                        \[
                        \mu_{I}(A)=\frac{1}{2}\Big(\sum_{\alpha\in \Phi^+_I(\fq)\setminus A}\alpha-\sum_{\alpha\in A}\alpha\Big)=\rho(\fl_I\cap\fq)-\sum_{\alpha\in A}\alpha
                        \]
                        where $\rho(\fl_I\cap\fq)=\frac{1}{2}\sum_{\alpha\in\Phi_I^+ (\fq)}\alpha$.
                        Since $A$ is also a~subset of $\Phi^+ (\fq)$, the weight $\mu(A)\in\Lambda(\spin)$ and the multiplicity $\mult_\spin(\mu(A))$ can be considered. The following lemma relates the weights $\mu_{I}(A)$ and $\mu(A)$ and their multiplicities.
                        
                        \begin{lemma}\label{L3.7-new}
                                Let $I\subset\Delta$ be a~subset of simple roots and let $A\subset\Phi_I^+ (\fq)$.
                                Then we have $\mult_\spin(\mu(A))=\mult_{\spin_I}(\mu_{I}(A))$.
                        \end{lemma}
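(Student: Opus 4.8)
The plan is to exhibit an explicit bijection between the sets of subsets of $\Phi^+(\fq)$ that produce $\mu(A)$ inside $\spin$ and those that produce $\mu_I(A)$ inside $\spin_I$. The key observation is that, by definition, $\mu(A)$ (as a weight of $\spin$, associated to the pair $(\fh,\fq)$) and $\mu_I(A)$ (as a weight of $\spin_I$, associated to the pair $(\fl_I\cap\fh,\fl_I\cap\fq)$) differ only by the contribution of the roots in $\Phi^+(\fq)\setminus\Phi_I^+(\fq)$, and that difference is exactly $\rho(\fq)-\rho(\fl_I\cap\fq)=\frac12\sum_{\alpha\in\Phi^+(\fq)\setminus\Phi_I^+(\fq)}\alpha$, a fixed vector not depending on $A$. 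So the first step is to write, for any $B\subset\Phi^+(\fq)$,
\[
\mu(B)=\rho(\fq)-\sum_{\alpha\in B}\alpha,\qquad \mu_I(A)=\rho(\fl_I\cap\fq)-\sum_{\alpha\in A}\alpha,
\]
and record that $\mu(B)=\mu(A)$ for the given $A\subset\Phi_I^+(\fq)$ if and only if $\sum_{\alpha\in B}\alpha=\sum_{\alpha\in A}\alpha$.

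The second step is the core combinatorial point: I want to show that if $B\subset\Phi^+(\fq)$ satisfies $\sum_{\alpha\in B}\alpha=\sum_{\alpha\in A}\alpha$ with $A\subset\Phi_I^+(\fq)$, then necessarily $B\subset\Phi_I^+(\fq)$. This follows from the fact that $\langle I\rangle$ is a coordinate subspace with respect to the basis $\Delta$ of simple roots: writing every positive root in the basis $\Delta$, a root lies in $\Phi_I^+$ precisely when its expansion is supported on $I$. Since $\sum_{\alpha\in A}\alpha$ is supported on $I$ (each $\alpha\in A\subset\Phi_I^+$ is), and since all roots in $B\subset\Phi^+$ have nonnegative coordinates in the basis $\Delta$, the equality $\sum_{\alpha\in B}\alpha=\sum_{\alpha\in A}\alpha$ forces every root in $B$ to have vanishing coordinates outside $I$, i.e. $B\subset\Phi_I^+\cap\Phi^+(\fq)=\Phi_I^+(\fq)$. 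Hence
\[
\{B\subset\Phi^+(\fq):\mu(B)=\mu(A)\}=\{B\subset\Phi_I^+(\fq):\mu_I(B)=\mu_I(A)\},
\]
and counting both sides gives $\mult_\spin(\mu(A))=\mult_{\spin_I}(\mu_I(A))$.

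The main obstacle, and the step deserving the most care, is precisely this support argument: one must be sure that $\langle I\rangle$ really is spanned by $I$ as a subset of $\Delta$ (so that ``supported on $I$'' is meaningful and the nonnegativity of root coordinates can be used), and that $\Phi_I^+(\fq)$ as defined equals $\Phi^+(\fq)\cap\langle I\rangle$ — both of which are built into the definitions given just before the lemma. A minor point to verify alongside is the identity $\mu_I(A)=\rho(\fl_I\cap\fq)-\sum_{\alpha\in A}\alpha$, which is immediate from the definition of $\mu_I$ by splitting $\Phi_I^+(\fq)$ into $A$ and its complement, exactly as for $\mu(A)$. Everything else is bookkeeping, and no appeal to the Clifford-algebra construction of the spin modules is needed beyond the combinatorial description \eqref{spinweight} of their weights and multiplicities.
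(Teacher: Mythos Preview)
Your proposal is correct and follows essentially the same route as the paper: both reduce the equality of multiplicities to the equality of the two sets $\{B\subset\Phi^+(\fq):\sum_{\alpha\in B}\alpha=\sum_{\alpha\in A}\alpha\}$ and $\{B\subset\Phi_I^+(\fq):\sum_{\alpha\in B}\alpha=\sum_{\alpha\in A}\alpha\}$, and establish the nontrivial inclusion via the same support argument (positive roots have nonnegative coordinates in the simple-root basis, so a sum lying in $\langle I\rangle$ forces each summand to lie there). Your write-up is somewhat more explicit about the intermediate identities $\mu(B)=\rho(\fq)-\sum_{\alpha\in B}\alpha$ and $\mu_I(B)=\rho(\fl_I\cap\fq)-\sum_{\alpha\in B}\alpha$, but the logical structure is identical to the paper's proof.
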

                        
                        \begin{proof}
                                By \eqref{spinweight}, it suffices to show the equality of sets
                                \[
                                \left\{\{A'\subset\Phi_I^+ (\fq):\sum_{\alpha\in A'}\alpha=\sum_{\alpha\in A}\alpha\right\}=\left\{\{A'\subset\Phi^+ (\fq):\sum_{\alpha\in A'}\alpha=\sum_{\alpha\in A}\alpha\right\}.
                                \]
                                Since the inclusion $\subset$ is immediate, we only have to show the reverse inclusion. So let $A'\subset\Phi^+ (\fq)$ be a~subset such that $\sum_{\alpha\in A'}\alpha=\sum_{\alpha\in A}\alpha$. Since $\sum_{\alpha\in A}\alpha\in\langle I\rangle$, every simple root which arises as a~summand of a~root in $A'$ must belong to $I$. This implies that $A'\subset\Phi_I^+ (\fq)$ and therefore, the claim follows.
                        \end{proof}
                        
                        We finally give a~multiplicative formula for the multiplicity in the case where the subset $A$ can be decomposed as the union of subsets lying in disconnected subsystems of roots.
                        
                        \begin{proposition}\label{L3.8-new}
                                Let $I_1,\ldots,I_k\subset\Delta$ be subsets of simple roots such that $I_i\perp I_j$ whenever $i\not=j$. Assume that $A=\bigcup_{j=1}^k A_j$ where $A_j\subset \Phi^+_{I_j}(\fq)$. Then,
                                \[
                                \mult_\spin(\mu(A))=\prod_{j=1}^k \mult_\spin(\mu(A_j)).
                                \]
                        \end{proposition}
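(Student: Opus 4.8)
The plan is to reduce everything, via \eqref{spinweight}, to a counting statement about subsets of $\Phi^+(\fq)$ with a prescribed root-sum, and then to exploit the pairwise orthogonality of the $I_j$ to split such subsets into independent pieces. Put $I:=\bigcup_{j=1}^k I_j$.

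First I would record the combinatorial consequence of the hypothesis $I_i\perp I_j$. Since a simple root has positive norm, the $I_j$ are pairwise disjoint; as $\Delta$ is linearly independent, this gives $\langle I\rangle=\bigoplus_{j=1}^k\langle I_j\rangle$. Moreover, the support of any root is connected in the Dynkin diagram of $\fg$, and because there is no edge between a node of $I_i$ and a node of $I_j$ for $i\ne j$, the support of a root lying in $\langle I\rangle$ must be contained in a single $I_j$. Hence $\Phi\cap\langle I\rangle=\bigsqcup_{j=1}^k\Phi_{I_j}$, and intersecting with $\Phi^+(\fq)$ yields $\Phi^+_I(\fq)=\bigsqcup_{j=1}^k\Phi^+_{I_j}(\fq)$.

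Next I would run the argument of Lemma~\ref{L3.7-new} with this $I$: since $\sum_{\alpha\in A}\alpha\in\langle I\rangle$, every subset $A'\subset\Phi^+(\fq)$ with $\sum_{\alpha\in A'}\alpha=\sum_{\alpha\in A}\alpha$ is in fact contained in $\Phi^+_I(\fq)$, so $\mult_\spin(\mu(A))=\mult_{\spin_I}(\mu_{I}(A))$. By the first step, such an $A'$ splits uniquely as $A'=\bigsqcup_j A'_j$ with $A'_j:=A'\cap\Phi^+_{I_j}(\fq)$, and $\sum_{\alpha\in A'_j}\alpha\in\langle I_j\rangle$. Since the sum $\bigoplus_j\langle I_j\rangle$ is direct and $\sum_{\alpha\in A}\alpha=\sum_j\big(\sum_{\alpha\in A_j}\alpha\big)$ is the corresponding decomposition of the target in $\bigoplus_j\langle I_j\rangle$, the equality $\sum_{\alpha\in A'}\alpha=\sum_{\alpha\in A}\alpha$ is equivalent to $\sum_{\alpha\in A'_j}\alpha=\sum_{\alpha\in A_j}\alpha$ for every $j$. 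Thus $A'\mapsto(A'_1,\ldots,A'_k)$ is a bijection from $\{A'\subset\Phi^+(\fq):\sum_{\alpha\in A'}\alpha=\sum_{\alpha\in A}\alpha\}$ onto $\prod_{j=1}^k\{A'_j\subset\Phi^+_{I_j}(\fq):\sum_{\alpha\in A'_j}\alpha=\sum_{\alpha\in A_j}\alpha\}$.

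Taking cardinalities, using \eqref{spinweight} on the left and Lemma~\ref{L3.7-new} applied to each $I_j$ on the right, gives $\mult_\spin(\mu(A))=\prod_{j=1}^k\mult_{\spin_{I_j}}(\mu_{I_j}(A_j))=\prod_{j=1}^k\mult_\spin(\mu(A_j))$, which is the claim. The only mildly delicate point is the first step, namely verifying that pairwise orthogonal subsets of simple roots generate a subsystem that is the disjoint union of the pieces and sits inside a direct sum of the corresponding spans; but this is a standard consequence of the connectedness of root supports together with the orthogonality of the $I_j$, and everything afterwards is bookkeeping already carried out in Lemma~\ref{L3.7-new}.
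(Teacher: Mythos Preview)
Your proof is correct and follows essentially the same approach as the paper's: both reduce the multiplicity to counting subsets $A'\subset\Phi^+(\fq)$ with prescribed root-sum, use the argument of Lemma~\ref{L3.7-new} to confine $A'$ to $\Phi^+_I(\fq)$, and then exploit the direct sum $\langle I\rangle=\bigoplus_j\langle I_j\rangle$ to split $A'$ componentwise. The only cosmetic differences are that the paper reduces to $k=2$ and inducts, whereas you handle all $k$ at once, and you make the decomposition $\Phi^+_I(\fq)=\bigsqcup_j\Phi^+_{I_j}(\fq)$ explicit via connectedness of root supports, which the paper uses without comment.
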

                        
                        \begin{proof}
                                It is sufficient to deal with the situation where $k=2$, i.e., $A=A_1\cup A_2$; after that, the result follows from an easy induction on $k\geq 2$.
                                By \eqref{spinweight}, we know that
                                \[
                                \mult_\spin(\mu(A))=\#\left\{A'\subset\Phi^+ (\fq):\sum_{\alpha\in A'}\alpha=\sum_{\alpha\in A}\alpha\right\}.
                                \]
                                We claim that there is a~bijection
                                \begin{eqnarray*}
                                        &\displaystyle \left\{A'_1\subset \Phi_{I_1}^+ (\fq):\sum_{\alpha\in A'_1}\alpha=\sum_{\alpha\in A_1}\alpha\right\}\times \left\{A'_2\subset \Phi_{I_2}^+ (\fq):\sum_{\alpha\in A'_2}\alpha=\sum_{\alpha\in A_2}\alpha\right\} \\
                                        &\displaystyle \longrightarrow\ \left\{A'\subset\Phi^+ (\fq):\sum_{\alpha\in A'}\alpha=\sum_{\alpha\in A}\alpha\right\}
                                \end{eqnarray*}
                                given by $(A'_1,A'_2)\mapsto A'_1\cup A'_2$. This map is clearly well defined. It is injective since $A'_i=(A'_1\cup A'_2)\cap\langle I'_i\rangle$
                                for all $i\in\{1,2\}$. For the surjectivity, since 
                                \[\sum_{\alpha\in A'}\alpha=\sum_{\alpha\in A}\alpha\in\langle I_1\cup I_2\rangle=\langle I_1\rangle\oplus\langle I_2\rangle,\]
                                then $A'\subset \Phi^+_{I_1\cup I_2}(\fq)=\Phi^+_{I_1}(\fq)\cup \Phi_{I_2}^+ (\fq)$, hence $A'=A'_1\cup A'_2$ with $A'_i\subset\Phi^+_{I_i}(\fq)$, and we have 
                                \[\sum_{\alpha\in A'} \alpha=\sum_{\alpha\in A'_1} \alpha+\sum_{\alpha\in A'_2} \alpha=\sum_{\alpha\in A_1} \alpha+\sum_{\alpha\in A_2} \alpha,\]
                                hence $\sum_{\alpha\in A'_i} \alpha=\sum_{\alpha\in A_i}\alpha$ for $i\in\{1,2\}$, by considering the direct sum $\langle I_1\rangle\oplus\langle I_2\rangle$. This establishes the desired bijection. Thereby, $\mult_\spin(\mu(A))=\mult_\spin(\mu(A_1))\cdot\mult_\spin(\mu(A_2))$, by \eqref{spinweight} and Lemma~\ref{L3.7-new}.
                        \end{proof}

                        \subsection{General properties of weights in the case of a~Cartan subalgebra}\label{section-2.3}
                        
                        In this section, we discuss the multiplicities of weights of the spin module $\spin$ when $\fh=\ft$ is a~Cartan subalgebra.
                        Note that there is a~partial order on weights, specifically we set $\mu\leq\mu'$ if $\mu'-\mu\in\sum_{\alpha\in\Phi^+}\mathbb{Q}_{\geq 0}\alpha$.
                        
                        \begin{lemma}\label{L1}
                                \begin{enumerate}[(a)]
                                        \item\label{L1a} $\rho(\fg)=\mu(\emptyset)\in\Lambda(\spin)$.
                                        \item\label{L1b} For all $A\subset\Phi^+$, we have $\mu(A)=\rho(\fg)-\sum_{\alpha\in A}\alpha$.
                                        In particular, $\mu\leq\rho(\fg)$ for all $\mu\in\Lambda(\spin)$.
                                        \item\label{L1c} $\mult_\spin(\rho(\fg))=1$.
                                \end{enumerate}
                        \end{lemma}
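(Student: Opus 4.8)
The plan is to unwind the definitions, using that when $\fh=\ft$ one has $\fq=\fg$, hence $\Phi(\fq)=\Phi$ and $\Phi^+(\fq)=\Phi^+$. For part~(a), I would simply take $A=\emptyset$ in the defining formula~\eqref{spinweight}: then $\mu(\emptyset)=\frac{1}{2}\sum_{\alpha\in\Phi^+}\alpha=\rho(\fg)$, and since $\emptyset$ is a (vacuous) subset of $\Phi^+(\fq)$, this weight lies in $\Lambda(\spin)$ by definition. For part~(b), fix $A\subset\Phi^+$ and split the first sum in~\eqref{spinweight} as $\sum_{\alpha\notin A}\alpha=\sum_{\alpha\in\Phi^+}\alpha-\sum_{\alpha\in A}\alpha$; substituting gives
\[
\mu(A)=\frac{1}{2}\Big(\sum_{\alpha\in\Phi^+}\alpha-\sum_{\alpha\in A}\alpha-\sum_{\alpha\in A}\alpha\Big)=\frac{1}{2}\sum_{\alpha\in\Phi^+}\alpha-\sum_{\alpha\in A}\alpha=\rho(\fg)-\sum_{\alpha\in A}\alpha.
\]
Since $\sum_{\alpha\in A}\alpha\in\sum_{\alpha\in\Phi^+}\mathbb{Q}_{\geq 0}\alpha$, this yields $\mu(A)\leq\rho(\fg)$ in the partial order introduced just above the lemma; as every weight of $\spin$ is of the form $\mu(A)$ for some $A\subset\Phi^+$, the bound $\mu\leq\rho(\fg)$ holds for all $\mu\in\Lambda(\spin)$.

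For part~(c), I would use the multiplicity formula $\mult_\spin(\rho(\fg))=\#\{A\subset\Phi^+:\mu(A)=\rho(\fg)\}$. By part~(b), the condition $\mu(A)=\rho(\fg)$ is equivalent to $\sum_{\alpha\in A}\alpha=0$. Since all roots in $A\subset\Phi^+$ are positive, their sum can vanish only if $A=\emptyset$ (for instance, pair the root-space decomposition with any strictly dominant element of $\ft_{\mathbb R}$, or invoke the fixed order on $\ft_{\mathbb R}^*$: every positive root is $>0$, so any nonempty sum is $>0$). Hence the set has exactly one element, namely $A=\emptyset$, and $\mult_\spin(\rho(\fg))=1$.

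There is no real obstacle here: the statement is essentially a direct specialization of~\eqref{spinweight} to the case $\fq=\fg$, together with the elementary positivity fact that a nonempty sum of positive roots is nonzero. The only point requiring a word of care is that $\mu(A)$ is by construction always an element of $\Lambda(\spin)$, so part~(a) is immediate once one checks the arithmetic $\mu(\emptyset)=\rho(\fg)$, and part~(c) follows from the injectivity of $A\mapsto\sum_{\alpha\in A}\alpha$ at the single value $0$ rather than from any global statement about this map.
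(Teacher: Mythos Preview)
Your proof is correct and follows exactly the same approach as the paper's own proof, which simply notes that (a) and (b) are immediate from the definition of $\mu(A)$ and that (c) follows from (b) since $\mu(A)=\rho(\fg)$ forces $A=\emptyset$. Your write-up is just a more explicit unpacking of these observations.
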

                        
                        \begin{proof}
                                Parts \ref{L1a} and \ref{L1b} are immediate in view of the definitions of $\mu(\emptyset)$ and $\mu(A)$.
                                It follows from \ref{L1b} that we have $\mu(A)=\rho(\fg)$ if and only if $A=\emptyset$, so that part \ref{L1c} ensues.
                        \end{proof}
                        
                        As pointed out in Example~\hyperref[E1a]{2.1(a)}, in the present case of $\fh=\ft$, we have $W'=W$ (where $W'$ is as in Section~\ref{S3.1}). By Lemma~\ref{L-w-Levi}, $W$ acts on $\Lambda(\spin)$ in a~natural way and this action preserves the multiplicities of weights.
                        
                        By $\alpha^\vee\in\ft$ we denote the coroot associated to $\alpha\in\Phi^+$.
                        Recall that a~weight $\mu$ is said to be \textit{dominant} if
                        \[
                        \langle\mu,\alpha^\vee\rangle\geq 0\quad \mbox{for all}\ \alpha\in\Delta.
                        \]
                        We have $\langle\rho(\fg),\alpha^\vee\rangle=1$ for all simple root $\alpha$, which implies that $\rho(\fg)$ is a~dominant weight.
                        
                        By the action of $W$, every weight $\omega$ of $\ft$, hence \textit{a~fortiori} every weight $\mu\in\Lambda(\spin)$, can be transformed into a~dominant weight.
                        Thereby, we can restrict our attention to dominant weights of $\spin$.
                        
                        As noted in Example~\hyperref[E3.4b]{3.4(b)}, in the general case of Levi subalgebras, it can happen that all the weights of $\spin$ are of multiplicity one. We now stress that the situation is different in the case of a~Cartan subalgebra.
                        
                        \begin{definition}
                                We say that a~subset $A\subset\Phi^+$ is \emph{saturated} if the following condition is satisfied:
                                \[
                                \forall \alpha,\beta\in A,\ \alpha+\beta\in\Phi\Rightarrow \alpha+\beta\in A;\quad\mbox{and}\quad\forall \gamma,\delta\in \Phi^+ \setminus A,\ \gamma+\delta\in\Phi \Rightarrow \gamma+\delta\notin A.
                                \]
                        \end{definition}
                        
                        \begin{proposition}\label{pro-saturated}
                                Let $\mu\in\Lambda(\spin)$. Let $A\subset\Phi^+$ be a~subset such that $\mu=\mu(A)$. The following conditions are equivalent:
                                \begin{enumerate}[(i)]
                                        \item $\mult_\spin(\mu)=1$;
                                        \item $A$ is saturated;
                                        \item There is $w\in W$ such that $\mu=w(\rho(\fg))$.
                                \end{enumerate}
                        \end{proposition}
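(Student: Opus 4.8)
The plan is to prove the cycle of implications (i)~$\Rightarrow$~(ii)~$\Rightarrow$~(iii)~$\Rightarrow$~(i), from which all three equivalences follow. The implication (iii)~$\Rightarrow$~(i) is immediate: since $\fh=\ft$ we have $W'=W$, so $\mult_\spin$ is constant on $W$-orbits of $\Lambda(\spin)$ by Lemma~\ref{L-w-Levi}(b), and $\mult_\spin(\rho(\fg))=1$ by Lemma~\ref{L1}(c); hence $\mu=w(\rho(\fg))$ gives $\mult_\spin(\mu)=1$.

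For (i)~$\Rightarrow$~(ii) I argue by contraposition. By \eqref{spinweight} together with Lemma~\ref{L1}(b), the multiplicity $\mult_\spin(\mu(A))$ equals the number of subsets $A'\subset\Phi^+$ with $\sum_{\alpha\in A'}\alpha=\sum_{\alpha\in A}\alpha$, so it suffices to exhibit, whenever $A$ is not saturated, a~second such subset $A'\neq A$. If there are $\alpha,\beta\in A$ with $\alpha+\beta\in\Phi$ but $\alpha+\beta\notin A$, put $\gamma:=\alpha+\beta$ and $A':=(A\setminus\{\alpha,\beta\})\cup\{\gamma\}$; one has $\gamma\in\Phi^+$ because $\alpha,\beta\in\Phi^+$, moreover $\alpha\neq\beta$ (else $2\alpha\in\Phi$) and $\gamma\notin\{\alpha,\beta\}$, so $A'$ is a~genuinely different subset of $\Phi^+$ with the same root-sum. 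Symmetrically, if there are $\gamma,\delta\in\Phi^+\setminus A$ with $\gamma+\delta\in\Phi$ but $\gamma+\delta\in A$, put $A':=(A\setminus\{\gamma+\delta\})\cup\{\gamma,\delta\}$, where $\gamma\neq\delta$ and $\gamma,\delta\neq\gamma+\delta$ again because the root system is reduced. In both cases $\mult_\spin(\mu)\geq 2$, which proves the contrapositive.

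The real content is in (ii)~$\Rightarrow$~(iii). I would first establish the identity, valid for every $w\in W$,
\[
w(\rho(\fg))=\mu\bigl(N(w)\bigr),\qquad N(w):=\{\alpha\in\Phi^+:w^{-1}(\alpha)\in\Phi^-\},
\]
by decomposing $2\rho(\fg)=\sum_{\alpha\in\Phi^+}\alpha$ along $\Phi^+=(\Phi^+\cap w\Phi^+)\sqcup(\Phi^+\cap w\Phi^-)$ and rewriting after applying $w$ — a~computation of the same flavour as the one in the proof of Lemma~\ref{L-w-Levi}(a). I would then invoke the classical characterization of inversion sets: a~subset of $\Phi^+$ has the form $N(w)$ for some $w\in W$ if and only if it is \emph{biconvex}, i.e.\ both it and its complement in $\Phi^+$ are closed under taking sums that lie in $\Phi$ — which is exactly the condition defining a~saturated set. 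Granting this, a~saturated $A$ equals $N(w)$ for some $w$, and hence $\mu=\mu(A)=w(\rho(\fg))$.

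I expect the main obstacle to be the nontrivial half of the biconvexity characterization, namely that every saturated $A$ is an inversion set, which I would prove by induction on $\#A$. A~nonempty saturated $A$ must contain a~simple root: take $\eta\in A$ of minimal height and write $\eta=\alpha_i+\eta'$ with $\alpha_i$ simple and $\eta'\in\Phi^+$ (every non-simple positive root admits such a~decomposition); the second clause in the definition of saturated, applied to $\eta\in A$, forces $\alpha_i\in A$ or $\eta'\in A$, and the latter contradicts minimality of the height of $\eta$, so $\alpha_i\in A$. Then $s_i(A\setminus\{\alpha_i\})$ is a~subset of $\Phi^+$ of cardinality $\#A-1$ (as $s_i$ permutes $\Phi^+\setminus\{\alpha_i\}$), a~short case analysis shows it is again saturated, and if it equals $N(v)$ by the induction hypothesis then $A=N(s_iv)$, checked by examining the two sets on $\alpha_i$ and on $\Phi^+\setminus\{\alpha_i\}$ separately. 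The delicate points are precisely this bookkeeping — verifying that $s_i(A\setminus\{\alpha_i\})$ remains closed and co-closed — and keeping the inverse/sign conventions consistent in $w(\rho(\fg))=\mu(N(w))$; everything else is routine. Alternatively, the biconvexity characterization may simply be quoted from the standard literature on root systems and Coxeter groups.
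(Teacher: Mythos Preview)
Your proof is correct and follows essentially the same approach as the paper: the implications (iii)$\Rightarrow$(i) and (i)$\Rightarrow$(ii) are handled identically, and for (ii)$\Rightarrow$(iii) both you and the paper argue by induction on $\#A$, locating a simple root in $A$ via a minimality argument, passing to $s_\alpha(A\setminus\{\alpha\})$, and checking that this smaller set is again saturated. The only difference is packaging: you make the inversion-set identity $w(\rho(\fg))=\mu(N(w))$ and the biconvexity characterization explicit, whereas the paper keeps these implicit and works directly with the relation $\mu=s_\alpha(\mu(A'))$; the underlying argument is the same.
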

                        
                        \begin{proof}
                                By Lemma~\ref{L1}, we have $\mult_\spin(\rho(\fg))=1$. The implication (iii)$\Rightarrow$(i) is obtained by
                                combining this observation with Lemma~\ref{L-w-Levi}.
                                
                                (i)$\Rightarrow$(ii): Arguing indirectly, assume that $A$ is not saturated.
                                If there are $\alpha,\beta\in A$ such that $\gamma:=\alpha+\beta\in \Phi^+ \setminus A$, then $A':=A\cup\{\gamma\}\setminus\{\alpha,\beta\}$ is a~subset different from $A$ such that $\mu(A')=\mu(A)=\mu$. If there are $\gamma,\delta\in \Phi^+ \setminus A$ such that $\alpha:=\gamma+\delta\in A$, then $A'':=A\cup\{\gamma,\delta\}\setminus\{\alpha\}$ is different from $A$ and such that $\mu(A'')=\mu(A)=\mu$. In both cases, we conclude that $\mathrm{mult}_\spin(\mu)\geq 2$.
                                
                                (ii)$\Rightarrow$(iii): We argue by induction on $\#A$.
                                If $\#A=0$, i.e., $A=\emptyset$, then $\mu=\rho(\fg)$ and (iii) holds with $w=1$.
                                Let $\#A=k\geq 1$ and
                                assume that the claim is satisfied for any subset $A'$ with $\#A'<k$. We claim that $A$ contains at least one simple root $\alpha\in\Delta$. Indeed, we can choose an element $\delta\in A$ which is minimal for the partial order $\leq$ on roots determined by the set of positive roots. If $\delta$ is not a~simple root, then there is a~simple root $\alpha$ and a~positive root $\gamma$ such that $\delta=\alpha+\gamma$. Then, by minimality of $\delta$, we have $\alpha,\gamma\notin A$, and this contradicts the assumption that $A$ is saturated.
                                
                                Hence we can choose a~simple root $\alpha$ which belongs to $A$. Invoking Lemma~\ref{L1}, it follows that
                                \[
                                \mu=\rho(\fg)-\sum_{\beta\in A}\beta=(\rho(\fg)-\alpha)-\sum_{\beta\in A\setminus\{\alpha\}}\beta=s_\alpha(\rho(\fg))-\sum_{\beta\in A'}s_\alpha(\beta)=s_\alpha(\mu(A'))
                                \]
                                where we set
                                \[
                                A'=s_\alpha(A\setminus \{\alpha\}).
                                \]
                                Note that the simple reflection $s_\alpha$ permutes the positive roots distinct from $\alpha$, and this implies that $A'$ is also a~subset of $\Phi^+$, so that the notation $\mu(A')$ makes sense. In addition $\#A'=k-1$. We claim that
                                \begin{equation}\label{3.3}
                                        \mbox{$A'$ is saturated.}
                                \end{equation}
                                Once \eqref{3.3} is verified, we get the desired conclusion that $\mu=s_\alpha(\mu(A'))\in W(\rho(\fg))$, due to the induction hypothesis. Therefore, it remains to show (\ref{3.3}).
                                
                                To do this, we first assume $\gamma,\delta\in A'$ such that $\gamma+\delta$ is a~root. Since $\gamma+\delta$ is not a~simple root, we have $\gamma+\delta\not=\alpha$, hence $s_\alpha(\gamma+\delta)$ is a~positive root, and we have
                                \[s_\alpha(\gamma+\delta)=s_\alpha(\gamma)+s_\alpha(\delta),\]
                                where $s_\alpha(\gamma)$ and $s_\alpha(\delta)$ belong to $A$. Since $A$ is saturated, this implies that $s_\alpha(\gamma+\delta)\in A$. Moreover, $s_\alpha(\gamma+\delta)\not=\alpha$ (because $s_\alpha(\alpha)\in-\Phi^+$), whence $\gamma+\delta\in s_\alpha(A\setminus\{\alpha\})=A'$. 
                                
                                We next assume $\gamma,\delta\in\Phi^+ \setminus A'$ such that $\gamma+\delta$ is a root, and let us show that $\gamma+\delta\notin A'$. If $\gamma,\delta$ are different from $\alpha$, then $s_\alpha(\gamma),s_\alpha(\delta)\in\Phi^+ \setminus A$, and this implies that 
                                \[s_\alpha(\gamma+\delta)=s_\alpha(\gamma)+s_\alpha(\delta)\notin A\]
                                due to the fact that $A$ is saturated, and therefore, $\gamma+\delta\notin A'$. 
                                
                                Finally assume that $\gamma\not=\alpha$ and $\delta=\alpha$. We have to show that $\gamma+\alpha$ does not belong to $A'$. Arguing by contradiction, assume that $\gamma+\alpha\in A'$, hence $s_\alpha(\gamma+\alpha)\in A$.
                                Note that 
                                \[s_\alpha(\gamma+\alpha)+\alpha=s_\alpha(\gamma)-\alpha+\alpha=s_\alpha(\gamma)\]
                                is a~root. Since
                                $\alpha,s_\alpha(\gamma+\alpha)\in A$, we must have $s_\alpha(\gamma)\in A$ (because $A$ is saturated), and so $\gamma\in A'$, a~contradiction.
                        \end{proof}
                        
                        \begin{remark}
                                $0$ is a~weight of $\spin$ if and only if $\rho(\fg)$ can be expressed as a~sum of (pairwise distinct) positive roots. This is not always the case: for instance, when $\fg=\mathfrak{sl}(n)$, we have $0\in\Lambda(\spin)$ if and only if $n$ is odd. Note that, given any subset $A\subset\Phi^+$, the following equivalences hold:
                                \[
                                \mu(A)=0\quad\Leftrightarrow\quad\sum_{\alpha\in A}\alpha=\rho(\fg)\quad\Leftrightarrow\quad \sum_{\alpha\in\Phi^+ \setminus A}\alpha=\rho(\fg).
                                \]
                                In particular, the mapping $A\mapsto \Phi^+ \setminus A$ induces an involution without fixed point on the set $\{A\subset\Phi^+ :\mu(A)=0\}$, which implies that $\mult_\spin(0)$ is always even.
                        \end{remark}

                        \section{The case of a~Levi subalgebra for \texorpdfstring{$\mathfrak{g}=\mathfrak{sl}(n)$}{g=sln}}\label{section-4}
                        
                        In this section we assume that $\fg=\mathfrak{sl}(n)$ and $\ft$ is the Cartan subalgebra consisting of diagonal matrices $\text{diag}(t_1,t_2,\ldots,t_n)$ with $t_1+t_2+\ldots+t_n=0$. For $i\in\{1,\ldots,n\}$, let $\epsilon_i\in\ft^*$ be defined by
                        \[
                        \epsilon_i:\text{diag}(t_1,t_2,\ldots,t_n)\mapsto t_i.
                        \]
                        We choose the following sets of positive roots and simple roots :
                        \begin{eqnarray*}
                                \Phi^+ &=&\{\epsilon_i-\epsilon_j\mid1\leq i<j\leq n\}\quad\mbox{with}\quad \#\Phi^+ =\frac{n(n-1)}{2},\\
                                \Delta & = & \{\alpha_1,\ldots,\alpha_{n-1}\}\quad\mbox{where}\quad\alpha_i=\epsilon_i-\epsilon_{i+1}.
                        \end{eqnarray*}
                        %The weight lattice is $\Lambda=\bigoplus_{i=1}^n \mathbb{Z}\epsilon_i$.
                        The $i$th fundamental weight is $\varpi_i=\epsilon_1+\ldots+\epsilon_i$ ($i=1,\ldots,n-1$).
                        The weight lattice is $\Lambda=\bigoplus_{i=1}^{n-1}\mathbb{Z}\varpi_i$.
                        A weight $\mu=\sum_{i=1}^{n-1} x_i\varpi_i$ is dominant if $x_i\geq 0$ for all $i$.
                        The half-sum of positive roots is
                        \[
                        \rho(\fg)=\frac{n-1}{2}\epsilon_1+\frac{n-3}{2}\epsilon_2+\ldots-\frac{(n-3)}{2}\epsilon_{n-1}-\frac{(n-1)}{2}\epsilon_n
                        =\varpi_1+\ldots+\varpi_{n-1}.
                        \]
                        Note that if $\mu=b_1\epsilon_1+\ldots+b_n\epsilon_n$ is a~weight of $\spin$, then it must satisfy $b_1+\ldots+b_n=0$ (see \eqref{spinweight}).
                        
                        The computation of multiplicities of weights for general modules often reduces to challenging combinatorial problems. In the present case of ${\mathfrak g}={\mathfrak s}{\mathfrak l}(n)$, our strategy is based on a~specific parametrization of weights involving fairly simple matrices associated with the root system of ${\mathfrak s}{\mathfrak l}(n)$. It would be interesting to extend this method to other classical Lie algebras and root systems.
                        
                        \subsection{Encoding weights with matrices}\label{S4.1}
                        
                        As seen in \eqref{spinweight}, any weight $\mu\in\Lambda(\mathbf{S})$ is of the form $\mu(A)$ where $A\subset\Phi^+ (\mathfrak{q})$.
                        
                        Let $\fh=\fh(c)$ be the Levi subalgebra corresponding to a~composition
                        $c=(c_1\leq \ldots\leq c_k)$ of $n$ (see Example~\ref{E3.4}\,(a)).
                        We define the sequence
                        \begin{equation}\label{sigma}
                                \sigma=((n-c_1)^{c_1},\ldots,(n-c_k)^{c_k})
                        \end{equation}
                        which is a~composition of $n^2 -(c_1^2 +\ldots+c_k^2)$ (in fact a~partition).

                        We denote by $\mathcal{M}(\fq)$ the set of matrices $a=(a_{i,j})$ of size $n\times n$ which have the following property:
                        \[
                        \left\{
                        \begin{tabular}
                                {ll}
                                $\bullet$ & $a_{i,j}\in\{0,1\}$ for all $i,j$; \\
                                $\bullet$ & $a_{i,j}=1$ $\Rightarrow$ $a_{j,i}=0$; \\
                                $\bullet$ & $\epsilon_i-\epsilon_j\in\Phi(\fh)$ $\Rightarrow$ $a_{i,j}=0$; \\
                                $\bullet$ & $\epsilon_i-\epsilon_j\in\Phi(\fq)$ $\Rightarrow$ $a_{i,j}=1$ or $a_{j,i}=1$.
                        \end{tabular}
                        \right.
                        \]
                        The second point yields in particular $a_{i,i}=0$ for all $i$.
                        
                        For every $A\subset\Phi^+ (\fq)$, we can define a~matrix $a\in\mathcal{M}(\fq)$ by letting
                        \[
                        a_{i,j}=\left\{
                        \begin{array}
                                {ll} 1 & \mbox{if $\epsilon_i-\epsilon_j\in \Phi^+ (\fq)\setminus A$ or $\epsilon_j-\epsilon_i\in A$}, \\ 0 & \mbox{otherwise}.
                        \end{array}
                        \right.
                        \]
                        Conversely, if $a=(a_{i,j})$ belongs to $\mathcal{M}(\fq)$, then we define the set
                        \[
                        A=\{\epsilon_i-\epsilon_j\in\Phi^+ (\fq):a_{i,j}=0\}.
                        \]
                        
                        \begin{lemma}\label{L-matrices}
                                We have 
                                \[\mu(A)=\mu_a:=\sum_{i=1}^n \left(a_{i,*}-\frac{\sigma_i}{2}\right)\epsilon_i=\sum_{i=1}^{n-1}\left(a_{i,*}-a_{i+1,*}-\frac{\sigma_i-\sigma_{i+1}}{2}\right)\varpi_i,\]
                                where $a_{i,*}:=\sum_{j=1}^n a_{i,j}$ and $\sigma_i$ is the $i$th term of the sequence~(\ref{sigma}).
                        \end{lemma}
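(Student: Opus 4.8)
The plan is to unwind the definitions of $\mu(A)$ in \eqref{spinweight} and of the matrix $a$ associated to $A$, and to match coefficients of the $\epsilon_i$. First I would fix a row index $i$ and compute the contribution of that row. By construction, for $j$ with $\epsilon_i-\epsilon_j\in\Phi(\fq)$ we have $a_{i,j}=1$ precisely when $\epsilon_i-\epsilon_j\in\Phi^+(\fq)\setminus A$ or $\epsilon_j-\epsilon_i\in A$; for $j$ with $\epsilon_i-\epsilon_j\in\Phi(\fh)$ (or $i=j$) we have $a_{i,j}=0$. Hence $a_{i,*}=\#\{j:\epsilon_i-\epsilon_j\in\Phi^+(\fq)\setminus A\}+\#\{j:\epsilon_j-\epsilon_i\in A\}$. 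On the other hand, in the formula $\mu(A)=\tfrac12\bigl(\sum_{\alpha\notin A}\alpha-\sum_{\alpha\in A}\alpha\bigr)$ over $\alpha\in\Phi^+(\fq)$, the coefficient of $\epsilon_i$ receives $+\tfrac12$ for each $\epsilon_i-\epsilon_j\in\Phi^+(\fq)\setminus A$, $-\tfrac12$ for each $\epsilon_i-\epsilon_j\in A$, $-\tfrac12$ for each $\epsilon_j-\epsilon_i\in\Phi^+(\fq)\setminus A$, and $+\tfrac12$ for each $\epsilon_j-\epsilon_i\in A$.

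The key bookkeeping step is then to observe that $\sigma_i$, the $i$th entry of the sequence \eqref{sigma}, equals $n-c_\ell$ where $\ell$ is the block containing $i$, and that $n-c_\ell=\#\{j:\epsilon_i-\epsilon_j\in\Phi(\fq)\}$ is exactly the number of indices $j$ in a different block from $i$. Splitting this count as those $j$ with $\epsilon_i-\epsilon_j\in\Phi^+(\fq)$ versus $\epsilon_j-\epsilon_i\in\Phi^+(\fq)$, and each of those further into the $A$ and non-$A$ parts, I would write $\sigma_i$ as a sum of four cardinalities. Subtracting $\sigma_i/2$ from $a_{i,*}$ and comparing with the coefficient of $\epsilon_i$ in $\mu(A)$ computed above, the four terms line up term by term: the non-$A$ positive roots contribute $1-\tfrac12=\tfrac12$, the $A$ positive roots contribute $0-\tfrac12=-\tfrac12$, the non-$A$ negative roots contribute $0-\tfrac12=-\tfrac12$, and the $A$ negative roots contribute $1-\tfrac12=\tfrac12$. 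This gives the first equality $\mu(A)=\sum_i(a_{i,*}-\sigma_i/2)\epsilon_i$.

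For the second equality, I would simply re-express the weight in the basis of fundamental weights. Since $\varpi_i=\epsilon_1+\cdots+\epsilon_i$ and $\sum_i(a_{i,*}-\sigma_i/2)=0$ (both $\sum_i a_{i,*}$ and $\sum_i\sigma_i$ count appropriately, or one invokes that every spin weight has zero coordinate sum, as recorded just before Section~\ref{S4.1}), the standard change of basis $\sum_i x_i\epsilon_i=\sum_{i=1}^{n-1}(x_i-x_{i+1})\varpi_i$ valid for $\sum_i x_i=0$ yields the stated formula with $x_i=a_{i,*}-\sigma_i/2$.

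The main obstacle, and really the only nontrivial point, is the careful four-way partition of the index set $\{j:\epsilon_i-\epsilon_j\in\Phi(\fq)\}$ and verifying that $\sigma_i$ indeed equals $n-c_\ell$; once the roots of $\fq$ through the $i$th coordinate are correctly enumerated and matched against the defining cases of the matrix $a$, everything reduces to the elementary arithmetic of $\pm\tfrac12$ contributions, so no serious difficulty remains.
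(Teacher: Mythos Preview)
Your proposal is correct and follows essentially the same approach as the paper: both arguments unwind the definition of $\mu(A)$ and match coefficients of the $\epsilon_i$ using that $\sigma_i$ counts the indices $j$ lying in a different block from $i$. The only cosmetic difference is that the paper packages the bookkeeping via the single identity $a_{i,*}+a_{*,i}=\sigma_i$ (using column sums) rather than your explicit four-way partition, but the content is the same.
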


                        \begin{proof}
                                We have
                                \begin{align*}
                                        2\mu(A) &= \sum_{\alpha\in\Phi^+ (\fq)\setminus A}\alpha -\sum_{\beta\in A}\beta \\
                                        &= \sum_{1\leq i<j\leq n} a_{i,j}(\epsilon_i-\epsilon_j)-\sum_{1\leq j<i\leq n}a_{i,j}(\epsilon_j-\epsilon_i)  \\
                                        &=\sum_{i=1}^n \sum_{j=1}^n a_{i,j}\epsilon_i-\sum_{i=1}^n \sum_{j=1}^n a_{i,j}\epsilon_j  \\
                                        &= \sum_{i=1}^n a_{i,*}\epsilon_i-\sum_{j=1}^n a_{*,j}\epsilon_j  \\
                                        &=\sum_{i=1}^n a_{i,*}\epsilon_i-\sum_{i=1}^n (\sigma_i-a_{i,*})\epsilon_i \\
                                        &= \sum_{i=1}^n (2a_{i,*}-\sigma_i)\epsilon_i 
                                \end{align*}
                                where $a_{*,j}$ stands for the sum of coefficients in the $j$-th column of the matrix, and where we note that $a_{i,*}+a_{*,i}=\sigma_i$, due to the properties of $a$ and the definition of the sequence $\sigma$ in (\ref{sigma}).
                        \end{proof}
                        
                        \begin{example}
                                We consider the Levi subalgebra $\fh=\fh(1,1,2)\subset\mathfrak{sl}(4)$ seen in Example~\ref{E3.4}\,(a). In this case, we have
                                $\sigma=(3,3,2,2)\vdash 10$. We encode a~weight $\mu=\sum_{i=1}^4 b_i\epsilon_i$ with the sequence $(b_1,b_2,b_3,b_4)$. Then
                                \begin{gather*}
                                        \mu_{a_1} =(\frac{3}{2},\frac{1}{2},-1,-1), \quad \mu_{a_2}=(\frac{3}{2},-\frac{1}{2},0,-1), \quad\mu_{a_3}=(\frac{3}{2},-\frac{3}{2},0,0),\\
                                        \mu_{a_4}=(\frac{1}{2},-\frac{1}{2},0,0),\quad\mu_{a_5}=(\frac{1}{2},\frac{1}{2},0,-1),\quad \mu_{a_6}=(\frac{1}{2},-\frac{3}{2},1,0),\\
                                        \mu_{a_7}=(\frac{1}{2},-\frac{1}{2},1,-1),\quad \mu_{a_8}=(-\frac{1}{2},-\frac{1}{2},1,0),\quad  \mu_{a_9}=(-\frac{1}{2},-\frac{3}{2},1,1)
                                \end{gather*}
                                are weights of $\spin$
                                corresponding to the matrices
                                \begin{eqnarray*}
                                        &a_1=\mbox{\footnotesize $
                                                \begin{pmatrix}
                                                        0 & 1 & 1 & 1 \\ 0 & 0 & 1 & 1 \\ 0 & 0 & 0 & 0 \\ 0 & 0 & 0 & 0
                                                \end{pmatrix}
                                                $},
                                        \
                                        a_2=\mbox{\footnotesize $
                                                \begin{pmatrix}
                                                        0 & 1 & 1 & 1 \\ 0 & 0 & 0 & 1 \\ 0 & 1 & 0 & 0 \\ 0 & 0 & 0 & 0
                                                \end{pmatrix}
                                                $},
                                        \
                                        a_3=\mbox{\footnotesize $
                                                \begin{pmatrix}
                                                        0 & 1 & 1 & 1 \\ 0 & 0 & 0 & 0 \\ 0 & 1 & 0 & 0 \\ 0 & 1 & 0 & 0
                                                \end{pmatrix}
                                                $},
                                        \\
                                        & a_4=\mbox{\footnotesize $
                                                \begin{pmatrix}
                                                        0 & 1 & 1 & 0 \\ 0 & 0 & 0 & 1 \\ 0 & 1 & 0 & 0 \\ 1 & 0 & 0 & 0
                                                \end{pmatrix}
                                                $},
                                        \
                                        a_5=\mbox{\footnotesize $
                                                \begin{pmatrix}
                                                        0 & 1 & 0 & 1 \\ 0 & 0 & 1 & 1 \\ 1 & 0 & 0 & 0 \\ 0 & 0 & 0 & 0
                                                \end{pmatrix}
                                                $},
                                        \
                                        a_6=\mbox{\footnotesize $
                                                \begin{pmatrix}
                                                        0 & 1 & 0 & 1 \\ 0 & 0 & 0 & 0 \\ 1 & 1 & 0 & 0 \\ 0 & 1 & 0 & 0
                                                \end{pmatrix}
                                                $},
                                        \\
                                        & a_7=\mbox{\footnotesize $
                                                \begin{pmatrix}
                                                        0 & 1 & 0 & 1 \\ 0 & 0 & 0 & 1 \\ 1 & 1 & 0 & 0 \\ 0 & 0 & 0 & 0
                                                \end{pmatrix}
                                                $},
                                        \
                                        a_8=\mbox{\footnotesize $
                                                \begin{pmatrix}
                                                        0 & 1 & 0 & 0 \\ 0 & 0 & 0 & 1 \\ 1 & 1 & 0 & 0 \\ 1 & 0 & 0 & 0
                                                \end{pmatrix}
                                                $},
                                        \
                                        a_9=\mbox{\footnotesize $
                                                \begin{pmatrix}
                                                        0 & 1 & 0 & 0 \\ 0 & 0 & 0 & 0 \\ 1 & 1 & 0 & 0 \\ 1 & 1 & 0 & 0
                                                \end{pmatrix}
                                                $}.
                                \end{eqnarray*}
                                In the present case, we have
                                $W'=\{w\in\mathfrak{S}_4:w(\{1,2\})=\{1,2\}\}\cong\mathfrak{S}_2\times\mathfrak{S}_2$ (where $W'$ is introduced in Section~\ref{S3.1}), and every weight $\mu\in\Lambda(\spin)$ is in the $W'$-orbit of exactly one of the elements $\mu_{a_1},\ldots,\mu_{a_9}$.
                        \end{example}

                        \subsection{Inductive formula}\label{section-inductive}
                        We aim to give an inductive formula for the multiplicities.
                        To do this, it is convenient to adopt the following notation: if $(c_1,\ldots,c_k)$ is a~composition of $n$ and $(b_1,\ldots,b_n)$ is a~sequence of half integers such that $b_1+\ldots+b_n=0$, then let
                        \[
                        \mult_{(c_1,\ldots,c_k)}(b_1,\ldots,b_n):=\mult_\spin(\mu)
                        \]
                        be the multiplicity of the weight $\mu=\sum_{i=1}^n b_i\epsilon_i$, relatively to the Levi subalgebra
                        $\fh=\fh(c)$ corresponding to the composition $c=(c_1\leq\ldots\leq c_k)$.
                        When $X$ is a~set, let $\mathcal{P}_{k}(X)$ be the set of subsets $J\subset X$ with $k$ elements.
                        Moreover, given $J\subset X$, let $\mathbf{1}_J:X\to \{0,1\}$ be the function given by $\mathbf{1}_J(x)=1$ or $0$ depending on whether $x\in J$ or $x\notin J$.

                        \begin{theorem}\label{T-inductive-Levi}
                                With the above notation, we have
                                \begin{align*}
                                        &\mult_{(c_1,\ldots,c_k)}(b_1,\ldots,b_n)  \\
                                        &=\sum_{J}\mult_{(c_1-1,c_2,\ldots,c_k)}\Big(b_2,\ldots,b_{c_1},b_{c_1+1}+\mathbf{1}_J(c_1+1)-\frac{1}{2},\ldots,b_n+\mathbf{1}_J(n)-\frac{1}{2}\Big)
                                \end{align*}
                                where $J$ runs over $\mathcal{P}_{b_1+\frac{n-c_1}{2}}(\{c_1+1,\ldots,n\})$ in the sum.
                        \end{theorem}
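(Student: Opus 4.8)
The plan is to recast the multiplicity as a counting problem for $\{0,1\}$-matrices with prescribed row sums, and then to build a bijection that peels off the first row and column. By the bijection $A\leftrightarrow a$ between subsets $A\subset\Phi^+(\fq)$ and matrices $a\in\mathcal{M}(\fq)$ recalled before Lemma~\ref{L-matrices}, together with that lemma, the number $\mult_{(c_1,\ldots,c_k)}(b_1,\ldots,b_n)=\mult_\spin(\mu)$, with $\mu=\sum_i b_i\epsilon_i$, is the cardinality of $\{a\in\mathcal{M}(\fq):\mu_a=\mu\}$. Expanding Lemma~\ref{L-matrices} in the basis $\varpi_1,\ldots,\varpi_{n-1}$, and using that $\sum_i\big(a_{i,*}-\frac{\sigma_i}{2}\big)=0$ (since $\sum_i a_{i,*}=\#\Phi^+(\fq)=\frac{1}{2}\sum_i\sigma_i$, by the defining properties of $\mathcal{M}(\fq)$) together with $\sum_i b_i=0$, the condition $\mu_a=\mu$ becomes equivalent to the row-sum system $a_{i,*}=b_i+\frac{\sigma_i}{2}$ for $i=1,\ldots,n$, where $\sigma=\sigma(c)$ is the partition \eqref{sigma}. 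So the task is to count the matrices in $\mathcal{M}(\fq)$ having these row sums.

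Next I would split off the first index. Since the first block of $\fh(c)$ is $\{1,\ldots,c_1\}$ of size $c_1$, the defining conditions of $\mathcal{M}(\fq)$ force $a_{1,j}=a_{j,1}=0$ for $j\leq c_1$, while for each $j>c_1$ exactly one of $a_{1,j}$, $a_{j,1}$ equals $1$ (because $\epsilon_1-\epsilon_j\in\Phi(\fq)$). Thus every such $a$ determines the set $J=J(a):=\{j\in\{c_1+1,\ldots,n\}:a_{1,j}=1\}$, and conversely the whole first row and column of $a$ are recovered from $J$ via $a_{1,j}=\mathbf{1}_J(j)$ and $a_{j,1}=1-\mathbf{1}_J(j)$ for $j>c_1$ (and $0$ for $j\leq c_1$). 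Moreover $a_{1,*}=\#J$, so the equation for $i=1$ forces $\#J=b_1+\frac{n-c_1}{2}$, i.e., $J$ ranges over $\mathcal{P}_{b_1+\frac{n-c_1}{2}}(\{c_1+1,\ldots,n\})$; partitioning the counted set according to the value of $J$ produces the summation in the statement.

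For fixed $J$, I would then exhibit a bijection between $\{a\in\mathcal{M}(\fq):a_{i,*}=b_i+\frac{\sigma_i}{2}\ \forall i,\ J(a)=J\}$ and the analogous set of matrices attached to the Levi subalgebra $\fh(c')\subset\mathfrak{sl}(n-1)$ with $c'=(c_1-1,c_2,\ldots,c_k)$, namely: delete the first row and column and relabel $i\mapsto i-1$. Under this relabelling the block decomposition for $c$ of $n$ turns into the block decomposition for $c'$ of $n-1$ (the first block loses the index $1$, and a block of size $0$ is dropped if $c_1=1$), so the image lies in $\mathcal{M}(\fq')$; the inverse re-inserts a first row and column dictated by $J$ as above. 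The row sums transform as follows, writing $\sigma'=\sigma(c')=\big((n-c_1)^{c_1-1},(n-1-c_2)^{c_2},\ldots,(n-1-c_k)^{c_k}\big)$: for $i\in\{2,\ldots,c_1\}$ one has $a_{i,1}=0$ and $\sigma'_{i-1}=n-c_1=\sigma_i$, so the $(i-1)$-st row sum of the deleted matrix equals $a_{i,*}=b_i+\frac{\sigma'_{i-1}}{2}$; for $i>c_1$ one has $a_{i,1}=1-\mathbf{1}_J(i)$ and $\sigma'_{i-1}=\sigma_i-1$, so that row sum equals $a_{i,*}-a_{i,1}=\big(b_i+\mathbf{1}_J(i)-\frac{1}{2}\big)+\frac{\sigma'_{i-1}}{2}$. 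Hence the row-sum system for $a$ corresponds exactly to the row-sum system for the smaller matrix with shifted data $\big(b_2,\ldots,b_{c_1},\,b_{c_1+1}+\mathbf{1}_J(c_1+1)-\frac{1}{2},\ldots,b_n+\mathbf{1}_J(n)-\frac{1}{2}\big)$ (which indeed sums to $0$, using $\#J=b_1+\frac{n-c_1}{2}$). Applying the first step to $\mathfrak{sl}(n-1)$ identifies the number of such smaller matrices with $\mult_{(c_1-1,c_2,\ldots,c_k)}$ of that shifted sequence, and summing over $J$ yields the formula.

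The main obstacle is not conceptual but bookkeeping: one must verify that each of the four defining conditions of $\mathcal{M}(\fq)$ is preserved in both directions of the deletion/insertion correspondence, and carefully account for the shift $\sigma'_{i-1}=\sigma_i-1$ on the blocks of index $\geq 2$ together with the extra contribution $a_{i,1}=1-\mathbf{1}_J(i)$ of the deleted column to the row sums. One also uses, tacitly, that $\mult_{(c')}$ depends only on the multiset of block sizes — via conjugacy of the corresponding Levi subalgebras, Lemma~\ref{L-w-Levi} — so that the possibly unordered tuple $c'=(c_1-1,c_2,\ldots,c_k)$ causes no trouble.
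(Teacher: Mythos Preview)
Your proof is correct and follows essentially the same approach as the paper: both arguments translate the multiplicity into a count of matrices in $\mathcal{M}(\fq)$ with prescribed row sums $a_{i,*}=b_i+\frac{\sigma_i}{2}$, partition this set according to the pattern $J$ of the first row, and then identify the fiber over each $J$ with the analogous count for the composition $(c_1-1,c_2,\ldots,c_k)$ by stripping off the first row and column and tracking the shift $\sigma'_{i-1}=\sigma_i-1$ for $i>c_1$. Your write-up is slightly more explicit about the bijection and about the tacit use of Lemma~\ref{L-w-Levi} to make sense of $\mult_{(c_1-1,c_2,\ldots,c_k)}$ when $c_1-1$ may fall below $c_2$, but the substance is the same.
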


                        \begin{proof}
                                Let $\sigma=(\sigma_i)_{i=1}^n =((n-c_1)^{c_1},\ldots,(n-c_k)^{c_k})$ be as before. Note that
                                \[
                                \sum_{i=1}^n \sigma_i=n^2 -\sum_{j=1}^k c_j^2 =\dim\fq.
                                \]
                                We use the set $\mathcal{M}(\fq)$ of matrices introduced in Section~\ref{S4.1} and recall that
                                \[
                                \mult_\spin(\mu)=N_\mu:=\#\{a\in\mathcal{M}(\fq):\mu_a=\mu\}.
                                \]
                                Any matrix $a\in\mathcal{M}(\fq)$ contains exactly $\frac{n^2 -\sum c_j^2}{2}$ coefficients equal to $1$.
                                More precisely, for all $i\in\{1,\ldots,n\}$, we have
                                \[
                                0\leq a_{i,*}\leq \sigma_i.
                                \]
                                Note that
                                \[
                                \mu_a=\mu\Leftrightarrow \forall i\in\{1,\ldots,n\},\ a_{i,*}-\frac{\sigma_i}{2}=b_i.
                                \]
                                In particular, we must have $a_{1,*}=b_1+\frac{n-c_1}{2}$. On the other hand,   
                                we have $a_{1,*}\in[0,n-c_1]\cap\mathbb{Z}$. Hence
                                \[
                                b_1\notin\{-\frac{n-c_1}{2},-\frac{n-c_1}{2}+1,\ldots,\frac{n-c_1}{2}\}\Rightarrow \mult_\spin(\mu)=0.
                                \]
                                Now, assume that
                                \[
                                k_1:=b_1+\frac{n-c_1}{2}\in\{0,\ldots,n-c_1\}.
                                \]
                                Given $J\in \mathcal{P}_{k_1}(\{c_1+1,\ldots,n\})$, we want to calculate the number
                                $N_\mu(J)=\#\mathcal{M}_\mu(J)$, where
                                \[
                                \mathcal{M}_\mu(J):=\left\{a=\mbox{\small$\left(
                                        \begin{array}
                                                {c|ccccc} 0 & \cdots0 & \!\alpha_{c_1+1}\!\! & \cdots & \!\!\alpha_n \\ \hline \vdots & & & \\ 0 & & & \\ \!\!\!\beta_{c_1+1}\! & & a' & \\ \vdots & & & \\ \beta_{n} & & &
                                        \end{array}
                                        \right)$}\in\mathcal{M}(q):\mu_a=\mu,\ \alpha_j=1-\beta_j=\mathbf{1}_J(j)\right\}.
                                \]
                                Note that
                                \[
                                N_\mu=\sum_{J\in\mathcal{P}_{k_1}(\{c_1+1,\ldots,n\})}N_\mu(J).
                                \]
                                We are looking for the condition on $a'$ which ensures $a\in \mathcal{M}_\mu(J)$.
                                Note that
                                $a_{i,*}=\beta_i+a'_{i,*}$ for all $i$, where we additionally set $\beta_2=\ldots=\beta_{c_1}=0$.
                                Hence
                                \begin{align*}
                                        a\in\mathcal{M}_\mu(J)&\Leftrightarrow \forall i=2,\ldots,n,\ a_{i,*}=b_i+\frac{\sigma_i}{2} \\
                                        &\Leftrightarrow \forall i=2,\ldots,n,\ a'_{i,*}=b_i-\beta_i+\frac{\sigma_i}{2}.    
                                \end{align*}
                                
                                Let $c'=(c_1-1,c_2,\ldots,c_k)$ and consider the corresponding
                                \[
                                \sigma'=(\sigma'_i)_{i=2}^n =((n-c_1)^{c_1-1},(n-1-c_2)^{c_2},\ldots,(n-1-c_k)^{c_k}).
                                \]
                                Hence
                                \[
                                \sigma'_i=\left\{
                                \begin{array}
                                        {ll} \sigma_i & \mbox{if $2\leq i\leq c_1$}, \\ \sigma_i-1 & \mbox{if $c_1+1\leq i\leq n$.}
                                \end{array}
                                \right.
                                \]
                                Thus
                                \begin{align*}
                                        a\in\mathcal{M}_\mu(J)&\Leftrightarrow \forall i=2,\ldots,n,\ a'_{i,*}-\frac{\sigma'_i}{2}=b_i-\beta_i+\frac{\sigma_i-\sigma'_i}{2}\\*
                                        &\Leftrightarrow \mu_{a'}=\sum_{i=2}^n (b_i-\beta_i+\frac{\sigma_i-\sigma'_i}{2})\epsilon_i.
                                \end{align*}
                                Note that 
                                \begin{align*}
                                        b_i-\beta_i&=%\left\{
                                        \begin{cases}
                                                b_i & \mbox{if $i\in J$ or $i\in\{2,\ldots,c_1\}$}, \\ b_i-1 & \mbox{if $i\in \{c_1+1,\ldots,n\}\setminus J$},
                                        \end{cases}
                                        \text{ and}\\
                                        \frac{\sigma_i-\sigma'_i}{2}&=%\left\{
                                        \begin{cases}
                                                0 & \mbox{if $2\leq i\leq c_1$}, \\ \frac{1}{2} & \mbox{if $c_1<i\leq n$}.
                                        \end{cases}
                                \end{align*}
                                %\right\$..
                                This establishes the result.
                        \end{proof}

                        \begin{example}
                                \begin{enumerate}[(a)]
                                        \item If $c=(n)$ then $\mult_\spin(\mu)=0$ if $\mu\not=0$ and $\mult_\spin(0)=1$.
                                        (In this case, the spin module is the trivial module.)
                                        \item If $c=(1,n-1)$ then the inductive formula yields
                                        \[
                                        \mult_{(1,n-1)}(b_1,\ldots,b_n)=\left\{
                                        \begin{array}
                                                {ll}1 & \mbox{if $b_1+b_2+\ldots+b_n=0$ and $b_i\in\{\pm\frac{1}{2}\}$ for all $i\geq 2$}, \\ 0 & \mbox{otherwise.}
                                        \end{array}
                                        \right.
                                        \]
                                        so we retrieve the fact that every nonzero multiplicity is $1$; see Example~\hyperref[E3.4b]{2.3(b)}.
                                \end{enumerate}
                        \end{example}

                        \section{The case of a~Cartan subalgebra for \texorpdfstring{$\mathfrak{g}=\mathfrak{sl}(n)$}{g=sln}}\label{section-Cartan-sln}
                        In this section we assume that ${\mathfrak h}=\ft$ and we keep the notation of Section~\ref{section-4}.
                        
                        \subsection{Encoding dominant weights of \texorpdfstring{$\spin$}{S} with partitions}\label{section-5.1}
                        
                        \begin{definition}\label{D-partitions}
                                \begin{enumerate}[(a)]
                                        \item For $n\geq 1$, we
                                        denote by $\mathrm{Part}(\binom{n}{2};n)$ the set of partitions of $\binom{n}{2}$ with at most $n$ parts, i.e., sequences of nonnegative integers
                                        \[
                                        \lambda=(\lambda_1\geq\ldots\geq\lambda_n)\quad\mbox{with}\quad\quad \lambda_1+\ldots+\lambda_n=\binom{n}{2}.
                                        \]
                                        \item Let $\mathcal{P}(n)\subset\mathrm{Part}(\binom{n}{2};n)$ be the subset of partitions
                                        satisfying in addition the condition
                                        \[
                                        \lambda_1+\ldots+\lambda_i\leq (n-1)+\ldots+(n-i)\ \mbox{ for all $i\in\{1,\ldots,n\}$}.
                                        \]
                                \end{enumerate}
                        \end{definition}
                        
                        Note that if $n=1$, we get $\mathrm{Part}(\binom{n}{2};n)=\mathrm{Part}(0;1)=\mathcal{P}(1)$.
                        
                        Recall the dominance order on partitions: given $\lambda=(\lambda_1,\ldots,\lambda_n)$,
                        $\lambda'=(\lambda'_1,\ldots,\lambda'_n)$ in $\mathrm{Part}(\binom{n}{2};n)$, we set $\lambda\preceq\lambda'$ if
                        \[
                        \lambda_1+\ldots+\lambda_i\leq\lambda'_1+\ldots+\lambda'_i\ \mbox{for all $i\geq1$}.
                        \]
                        Thus, $\mathcal{P}(n)$ is the set of partitions of $\binom{n}{2}$ with at most $n$ parts, which are $\preceq\lambda^0$ for the partition
                        \[
                        \lambda^0 :=(n-1,n-2,\ldots,1,0).
                        \]

                        \begin{example}
                                Let $n=5$. The elements of $\mathcal{P}(5)$ correspond to the Young diagrams
                                \begin{eqnarray*}
                                        &\mbox{\footnotesize$\yng(4,3,2,1)$}=\lambda^0,\quad
                                        \mbox{\footnotesize$\yng(4,3,1,1,1)$},\quad\mbox{\footnotesize$\yng(4,2,2,2)$},\quad
                                        \mbox{\footnotesize$\yng(4,2,2,1,1)$},\\
                                        &\mbox{\footnotesize$\yng(3,3,3,1)$},\quad
                                        \mbox{\footnotesize$\yng(3,3,2,2)$},\quad
                                        \mbox{\footnotesize$\yng(3,3,2,1,1)$},\quad
                                        \mbox{\footnotesize$\yng(3,2,2,2,1)$},\quad
                                        \mbox{\footnotesize$\yng(2,2,2,2,2)$}.
                                \end{eqnarray*}
                        \end{example}
                        
                        We define the dominant weight associated to a~partition $\lambda\in\mathrm{Part}(\binom{n}{2};n)$:
                        
                        \begin{definition}\label{mulambda}
                                Given $\lambda=(\lambda_1,\ldots,\lambda_n)\in\mathrm{Part}(\binom{n}{2};n)$, we define
                                \[
                                \mu[\lambda]=(\lambda_1-\lambda_2)\varpi_1+\ldots+(\lambda_{n-1}-\lambda_n)\varpi_{n-1}.
                                \]
                                In particular, $\mu[\lambda^0 ]=\rho(\fg)$.
                        \end{definition}
                        
                        \begin{remark}\label{R-matrix-partition}
                                In the present case where $\fh=\ft$ is a~Cartan subalgebra, the set $\mathcal{M}(\fq)$
                                of Section~\ref{S4.1} consists of matrices $a\in\mathcal{M}_n(\{0,1\})$ with $a_{i,i}=0$ and $a_{i,j}+a_{j,i}=1$ for all $i,j\in\{1,\ldots,n\}$, $i\not=j$.
                                Such a~matrix $a$ corresponds to a~subset 
                                \[A=\{\epsilon_i-\epsilon_j:i<j,\ a_{i,j}=0\}\subset\Phi^+\]
                                whose associated weight is
                                $\mu(A)=\sum_{i=1}^n (a_{i,*}-\frac{n.1}{2})\epsilon_i$,
                                where $a_{i,*}$ stands for the sum of coefficients in the $i$-th row of $a$
                                (see Lemma~\ref{L-matrices}).
                                This weight is dominant if and only if $a_{1,*}\geq\ldots\geq a_{n,*}$. This is equivalent to saying that the sequence $\lambda:=(a_{1,*},\ldots,a_{n,*})$ is a~partition of $\binom{n}{2}$. Moreover, in this case, we have $\mu(A)=\mu[\lambda]$.
                        \end{remark}
                        
                        Recall that the Weyl group $W=\mathfrak{S}_n$ acts on $\Lambda(\spin)$ (and this action preserves the multiplicity of weights: see Example~\ref{E1}\,(a) and Lemma~\ref{L-w-Levi}) and the $W$-orbit of every weight $\mu\in\Lambda(\spin)$ contains exactly one dominant weight. Thus, there is a~bijection $\Lambda(\spin)/W\cong\Lambda^+ (\spin)$, where $\Lambda^+ (\spin)\subset\Lambda(\spin)$ stands for the subset of dominant weights.
                        
                        \begin{proposition}\label{P4.4}
                                The map $\lambda\mapsto\mu[\lambda]$ establishes a~bijection between the set of partitions $\mathcal{P}(n)$ and the subset $\Lambda^+ (\spin)$ of dominant weights of the spin module.
                        \end{proposition}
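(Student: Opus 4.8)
The plan is to show that $\lambda \mapsto \mu[\lambda]$ is well defined on $\mathcal{P}(n)$ with image in $\Lambda^+(\spin)$, that it is injective, and that it is surjective onto $\Lambda^+(\spin)$. The injectivity is essentially free: from the coordinates $(\lambda_1 - \lambda_2, \ldots, \lambda_{n-1} - \lambda_n)$ of $\mu[\lambda]$ in the basis of fundamental weights, together with the normalization $\lambda_1 + \cdots + \lambda_n = \binom{n}{2}$, one recovers $\lambda$ uniquely, since $\mathfrak{sl}(n)$-weights determine only the consecutive differences $\lambda_i - \lambda_{i+1}$ but the total sum is fixed. So the crux is the bijection between $\mathcal{P}(n)$ and $\Lambda^+(\spin)$, which I would deduce directly from Remark~\ref{R-matrix-partition} together with Lemma~\ref{L-matrices} and the matrix description of $\mathcal{M}(\fq)$.

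The key observation, recorded in Remark~\ref{R-matrix-partition}, is that when $\fh = \ft$ the set $\mathcal{M}(\fq)$ consists exactly of the $\{0,1\}$-matrices $a$ with zero diagonal satisfying $a_{i,j} + a_{j,i} = 1$ for $i \neq j$ — equivalently, tournament matrices on $n$ vertices — and the associated weight is $\mu(A) = \sum_i (a_{i,*} - \tfrac{n-1}{2})\epsilon_i$. Such a weight is dominant precisely when $a_{1,*} \geq \cdots \geq a_{n,*}$, i.e. when the score sequence $\lambda := (a_{1,*}, \ldots, a_{n,*})$ is nonincreasing, and then $\sum_i a_{i,*} = \#\Phi^+ = \binom{n}{2}$, so $\lambda$ is a partition of $\binom{n}{2}$ with at most $n$ parts, and $\mu(A) = \mu[\lambda]$. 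Thus $\Lambda^+(\spin)$ is the set of $\mu[\lambda]$ for those $\lambda \in \mathrm{Part}(\binom{n}{2};n)$ that arise as a nonincreasing score sequence of some tournament on $n$ vertices. It therefore remains to identify this set of realizable score sequences with $\mathcal{P}(n)$, that is, with the partitions $\lambda \preceq \lambda^0 = (n-1, n-2, \ldots, 1, 0)$ in dominance order.

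This last identification is a classical result (Landau's theorem on score sequences of tournaments): a nonincreasing sequence of nonnegative integers $\lambda_1 \geq \cdots \geq \lambda_n$ with $\sum_i \lambda_i = \binom{n}{2}$ is the score sequence of a tournament if and only if $\lambda_1 + \cdots + \lambda_i \leq \binom{n}{2} - \binom{n-i}{2} = (n-1) + (n-2) + \cdots + (n-i)$ for all $i$, which is exactly the defining inequality of $\mathcal{P}(n)$ in Definition~\ref{D-partitions}(b) and, since $\sum_i \lambda_i = \sum_i \lambda^0_i$, is equivalent to $\lambda \preceq \lambda^0$. I would either cite Landau's theorem or include a short self-contained proof: the inequalities are necessary because the $i$ highest-scoring players play $\binom{i}{2}$ games among themselves, contributing $\binom{i}{2}$ to their total score, plus at most $i(n-i)$ from games against the rest; sufficiency is obtained by a standard induction on $n$ (or on the "defect" $\sum_i \binom{i}{2} - \sum \lambda_i$), adjusting a minimal violating configuration. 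Combining: $\lambda \mapsto \mu[\lambda]$ restricts to a bijection from $\mathcal{P}(n)$ onto $\Lambda^+(\spin)$.

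The main obstacle is making the realizability step clean. One must be careful that "partition with at most $n$ parts" allows trailing zeros (so $\lambda^0$ itself has last part $0$), and that the dominance comparison $\lambda \preceq \lambda^0$ is genuinely equivalent to Landau's inequalities — this works precisely because both sides have the same sum $\binom{n}{2}$, so the partial-sum inequalities for all $i \leq n$ are the only constraint. If the paper prefers to avoid invoking Landau's theorem by name, the inductive construction of a tournament with prescribed score sequence (peeling off the player with the minimal score, or symmetrizing) is the part that needs the most care, but it is entirely elementary.
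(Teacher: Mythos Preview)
Your proposal is correct and takes a genuinely different route from the paper. The paper proves the equivalence $N_\lambda\neq 0\Leftrightarrow\lambda\in\mathcal{P}(n)$ by induction on $n$, using its own inductive multiplicity formula (Theorem~\ref{P2}) together with the technical Lemma~\ref{lambda'}, which manufactures a suitable $1$-marking of $\lambda$ whose complement lies in $\mathcal{P}(n-1)$; the reverse implication is handled by showing that every $1$-marking of a partition outside $\mathcal{P}(n)$ again lands outside $\mathcal{P}(n-1)$. You instead recognize, via Remark~\ref{R-matrix-partition}, that the matrices in $\mathcal{M}(\fq)$ are exactly tournament matrices and that the dominant weights are parametrized by their nonincreasing score sequences, so the question becomes: which partitions of $\binom{n}{2}$ with at most $n$ parts are score sequences of tournaments? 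This is precisely Landau's theorem, and you check correctly that Landau's inequalities $\sum_{j>i}\lambda_j\geq\binom{n-i}{2}$ are equivalent (using $\sum_j\lambda_j=\binom{n}{2}$) to the defining inequalities of $\mathcal{P}(n)$, i.e., to $\lambda\preceq\lambda^0$. Your approach is shorter and more conceptual, and it explains \emph{why} the dominance condition appears; the paper's approach has the virtue of being self-contained and of exercising the machinery (Theorem~\ref{P2}) that it needs anyway for later results. In fact, Lemma~\ref{lambda'} is essentially an \textit{ad hoc} inductive proof of the sufficiency half of Landau's theorem, so the two arguments are closer than they first appear. If you want the paper to remain self-contained, the only place requiring real care in your write-up is the sufficiency direction of Landau (your ``peeling off the minimal score'' sketch works but needs the verification that the residual sequence again satisfies Landau's inequalities).
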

                        
                        The proposition is shown in Section~\ref{section-4.3} below. In Sections~\ref{section-4.3} and~\ref{section-4.4}, we calculate the multiplicities of weights, $\mult_\spin(\mu[\lambda])$, in terms of the corresponding partitions $\lambda$.
                        Before this, in the following subsection, we introduce some combinatorial material.
                        
                        \subsection{Inductive formula for multiplicities in terms of partitions}\label{section-4.3}

                        For a~partition $\lambda=(\lambda_1,\ldots,\lambda_n)\in\mathrm{Part}(\binom{n}{2};n)$, let
                        \[
                        N_\lambda=\mult_\spin(\mu[\lambda]).
                        \]
                        We deduce from Theorem~\ref{T-inductive-Levi} an inductive formula for computing the numbers $N_\lambda$.
                        To this end, we introduce the following notation.
                        
                        \begin{notation}\label{notation-markings}
                                Let $\lambda=(\lambda_1,\ldots,\lambda_n)\in\mathrm{Part}(\binom{n}{2};n)$ be a~partition viewed as a~Young diagram.
                                Fix an index $p\in\{1,\ldots,n\}$.
                                We call \emph{$p$-marking} the datum of a~subset of boxes $\beta\subset\lambda$ such that
                                \begin{itemize}
                                        \item $\beta$ contains exactly $n-1$ boxes;
                                        \item $\beta$ contains all the boxes of the $p$-th row of $\lambda$;
                                        \item in the other rows of $\lambda$, only the rightmost box may belong to $\beta$.
                                \end{itemize}
                                Let $M_p(\lambda)$ be the set of $p$-markings of $\lambda$.
                                For a $p$-marking $\beta\in M_p(\lambda)$,
                                by arranging the lengths of the rows of the subset $\lambda\setminus \beta\subset\lambda$ in nonincreasing order,
                                we obtain a~partition of $\binom{n}{2}-(n-1)=\binom{n-1}{2}$, which we denote by $\lambda-\beta$.
                        \end{notation}
                        
                        \begin{theorem}\label{P2}
                                Given a~partition $\lambda=(\lambda_1,\ldots,\lambda_n)\in\mathrm{Part}(\binom{n}{2};n)$ with at most $n$ positive parts
                                and any $p\in\{1,\ldots,n\}$,
                                we have
                                \[
                                \mult_\spin(\mu[\lambda])=N_\lambda=\sum_{\beta\in M_p(\lambda)} N_{\lambda-\beta}.
                                \]
                        \end{theorem}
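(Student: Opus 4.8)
The plan is to deduce Theorem~\ref{P2} from the inductive formula of Theorem~\ref{T-inductive-Levi}, specialized to the case $\fh=\ft$, together with the invariance of multiplicities under the Weyl group $W=\mathfrak{S}_n$ (Example~\ref{E1}\,(a) and Lemma~\ref{L-w-Levi}\,(b)). The first step is a translation of notation. When $\fh=\ft$, the composition attached to $\fh$ is $c=(1,\ldots,1)$ ($n$ ones), so the sequence~\eqref{sigma} is $\sigma=((n-1)^{n})$, i.e.\ $\sigma_i=n-1$ for every $i$. By Lemma~\ref{L-matrices} (equivalently Remark~\ref{R-matrix-partition}; or by writing $\mu[\lambda]=\sum_i(\lambda_i-\lambda_n)\epsilon_i$ and using $\sum_i\epsilon_i=0$) one has $\mu[\lambda]=\sum_{i=1}^{n}\bigl(\lambda_i-\tfrac{n-1}{2}\bigr)\epsilon_i$, hence $N_\lambda=\mult_{(1^n)}\bigl(\lambda_1-\tfrac{n-1}{2},\ldots,\lambda_n-\tfrac{n-1}{2}\bigr)$. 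Since $W'=W=\mathfrak{S}_n$ preserves multiplicities, the function $\mult_{(1^n)}$ is symmetric in its $n$ arguments; thus, fixing $p\in\{1,\ldots,n\}$ and writing $i(1)=p$, $i(2)<\cdots<i(n)$ for the elements of $\{1,\ldots,n\}\setminus\{p\}$, we get
\[
N_\lambda=\mult_{(1^n)}\bigl(\lambda_{i(1)}-\tfrac{n-1}{2},\ \ldots,\ \lambda_{i(n)}-\tfrac{n-1}{2}\bigr).
\]

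Next I would apply Theorem~\ref{T-inductive-Levi} to this sequence with $c=(1^n)$: here $c_1=1$, so $(c_1-1,c_2,\ldots,c_k)=(1^{n-1})$, the untouched block $b_2,\ldots,b_{c_1}$ is empty, and $b_1+\tfrac{n-c_1}{2}=\lambda_{i(1)}=\lambda_p$. The formula becomes
\[
N_\lambda=\sum_{J\in\mathcal{P}_{\lambda_p}(\{2,\ldots,n\})}\mult_{(1^{n-1})}\bigl(b'_2,\ldots,b'_n\bigr),\qquad b'_j:=\lambda_{i(j)}+\mathbf{1}_J(j)-\tfrac{n}{2}.
\]
Since $\tfrac{n}{2}=\tfrac{(n-1)-1}{2}+1$, one has $b'_j=\bigl(\lambda_{i(j)}-1+\mathbf{1}_J(j)\bigr)-\tfrac{(n-1)-1}{2}$, so, applying the dictionary of the first paragraph with $n-1$ in place of $n$ together with the symmetry of $\mult_{(1^{n-1})}$, each summand equals $N_{\lambda'}$, where $\lambda'$ is the partition of $\binom{n-1}{2}$ obtained by arranging the integers $\lambda_{i(j)}-1+\mathbf{1}_J(j)$ $(j=2,\ldots,n)$ in nonincreasing order --- provided they are all $\geq 0$; if one of them is negative, that summand is $0$, since $\mult_{(1^{n-1})}(b'_2,\ldots,b'_n)$ counts (via the proof of Theorem~\ref{T-inductive-Levi}) certain $0$-$1$ matrices whose $j$-th row sum must equal $\lambda_{i(j)}-1+\mathbf{1}_J(j)$.

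It remains to match $\mathcal{P}_{\lambda_p}(\{2,\ldots,n\})$ with the set $M_p(\lambda)$ of $p$-markings of $\lambda$. To a subset $J\in\mathcal{P}_{\lambda_p}(\{2,\ldots,n\})$ all of whose integers $\lambda_{i(j)}-1+\mathbf{1}_J(j)$ are nonnegative --- equivalently, $j\in J$ whenever row $i(j)$ of $\lambda$ is empty --- associate the set $\beta$ consisting of all $\lambda_p$ boxes of the $p$-th row of $\lambda$ together with the rightmost box of row $i(j)$ for each $j\in\{2,\ldots,n\}\setminus J$. Then $\beta$ has $\lambda_p+\bigl((n-1)-\lambda_p\bigr)=n-1$ boxes and removes a box only from nonempty rows, so $\beta\in M_p(\lambda)$; conversely every $p$-marking arises this way from a unique such $J$ (read off the rows $\neq p$ that lose a box and take $J$ to be the complementary indices in $\{2,\ldots,n\}$), and the subsets $J$ discarded above --- those producing a negative entry --- are exactly those not coming from a marking. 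Under this bijection the rows of $\lambda\setminus\beta$ have lengths $\lambda_{i(j)}-\bigl(1-\mathbf{1}_J(j)\bigr)$, so $\lambda'=\lambda-\beta$ in the sense of Notation~\ref{notation-markings}. Substituting and deleting the zero summands yields $N_\lambda=\sum_{\beta\in M_p(\lambda)}N_{\lambda-\beta}$.

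The only point requiring real care is this last bijection: one must check that the conditions defining a $p$-marking (exactly $n-1$ boxes, all of row $p$, at most the rightmost box in each other row) single out precisely the subsets $J$ for which $(\lambda_{i(j)}-1+\mathbf{1}_J(j))_{j}$ is a tuple of nonnegative integers, so that the ``missing'' subsets of $\{2,\ldots,n\}$ contribute nothing to the sum. Everything else is bookkeeping with the half-integer shifts plus a direct appeal to Theorem~\ref{T-inductive-Levi} and to Weyl-group invariance.
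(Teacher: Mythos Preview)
Your proof is correct and follows essentially the same route as the paper's: permute $\lambda_p$ into the first position using $W$-invariance, apply Theorem~\ref{T-inductive-Levi} with $c=(1^n)$, and set up the bijection between $\mathcal{P}_{\lambda_p}(\{2,\ldots,n\})$ and the set $M_p(\lambda)$ of $p$-markings. You are in fact slightly more careful than the paper on one point: when $\lambda$ has empty rows, not every $J$ corresponds to a genuine $p$-marking, and you correctly argue that those extra $J$'s contribute zero (since the required row-sum in the matrix model would be negative), whereas the paper simply asserts the bijection $M_p(\lambda)\cong\mathcal{P}_{\tilde\lambda_1}(\{2,\ldots,n\})$ without flagging this edge case.
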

                        
                        \begin{proof}
                                \begin{eqnarray*}
                                        N_\lambda & = & \mult_\spin(\mu[\lambda]) \\
                                        & = & \mult_\spin\big((\lambda_1-\lambda_2)\varpi_1+\ldots+(\lambda_{n-1}-\lambda_n)\varpi_{n-1}\big) \\
                                        & = & \mult_{(1,1,\ldots,1)}\Big(\lambda_1-\frac{n-1}{2},\ldots,\lambda_n-\frac{n-1}{2}\Big) \\
                                        & = & \mult_{(1,1,\ldots,1)}\Big(\tilde\lambda_1-\frac{n-1}{2},\ldots,\tilde\lambda_n-\frac{n-1}{2}\Big),\\ && \qquad\qquad\qquad\qquad \mbox{where $(\tilde\lambda_1,\ldots,\tilde\lambda_n)=(\lambda_p,\lambda_1,\ldots,\lambda_{p-1},\lambda_{p+1},\ldots,\lambda_n)$} \\
                                        % & = & \mult_{(1,1,\ldots,1)}\Big(\lambda_p-\frac{n-1}{2},\lambda_1-\frac{n-1}{2},\ldots,\lambda_{p-1}-\frac{n-1}{2},\lambda_{p+1}-\frac{n-1}{2},\ldots,\lambda_n-\frac{n-1}{2}\Big) %\\
                                        & = & \sum_{J\in\mathcal{P}_{\tilde\lambda_1}(\{2,\ldots,n\})}
                                        \mult_{(1,\ldots,1)}\Big(\tilde\lambda_2-\frac{n}{2}+\mathbf{1}_J(2),\ldots,\tilde\lambda_n-\frac{n}{2}+\mathbf{1}_J(n)\Big).
                                \end{eqnarray*}
                                There is a~bijection $M_p(\lambda)\cong \mathcal{P}_{\tilde\lambda_1}(\{2,\ldots,n\})$.
                                To see this, we first set $\tilde{\lambda}=(\tilde\lambda_1,\ldots,\tilde\lambda_n)$, which is not necessarily a~partition (since it is not necessarily a~nonincreasing sequence) but can nevertheless be represented by a~set of boxes with $n$ rows of lengths $\tilde\lambda_1,\ldots,\tilde\lambda_n$.
                                
                                Now, if $\beta$ is a~$p$-marking of $\lambda$, then $\beta$ is a~subset of $\lambda$ that contains exactly $n-1$ elements, so that $\beta$ contains all the $\lambda_p=\tilde\lambda_1$ boxes of the $p$-th row of $\lambda$ (which corresponds to the first row of $\tilde\lambda$) and exactly one box in $n-1-\tilde\lambda_1$ rows among the rows number $1,\ldots,p-1,p+1,\ldots,n$ of $\lambda$, that is among the rows number $2,\ldots,n$ of $\tilde\lambda$. Now we set 
                                \[J:=\{j=2,\ldots,n:\mbox{$\beta$ does not contain any box of the $j$-th row of $\tilde\lambda$}\},\]
                                which is therefore a~subset of $\{2,\ldots,n\}$ with $\tilde\lambda_1$ elements, that is $J\in\mathcal{P}_{\tilde\lambda_1}(\{2,\ldots,n\})$. Conversely, given $J\in\mathcal{P}_{\tilde\lambda_1}(\{2,\ldots,n\})$, let $\beta$ be the $p$-marking of $\lambda$ that contains the boxes of the $p$-th row of $\lambda$ (i.e., the first row of $\tilde\lambda$) and the rightmost box of the $i$-th row of $\tilde\lambda$ whenever $i\in\{2,\ldots,n\}\setminus J$.
                                
                                Assume that $J\in\mathcal{P}_{\tilde\lambda_1}(\{2,\ldots,n\})$ corresponds to the $p$-marking $\beta$ as above. Then, for  $i=2,\ldots,n$.
                                \[
                                \tilde\lambda_i-\frac{n}{2}+\mathbf{1}_J(i)=
                                \begin{cases}
                                        \tilde\lambda_i-1-\frac{n-2}{2} & \text{ if } i \notin J, \\
                                        \tilde\lambda_i-\frac{n-2}{2} & \text{ otherwise.}
                                \end{cases}
                                \]
                                We note that the condition $i \notin J$ means that $\beta$ contains the rightmost box of the $i$-th row of $\tilde\lambda$. The above formula implies that
                                \[
                                \Big(\tilde\lambda_2-\frac{n}{2}+\mathbf{1}_J(2),\ldots,\tilde\lambda_n-\frac{n}{2}+\mathbf{1}_J(n)\Big)
                                \]
                                is the list obtained by removing $\frac{n-2}{2}$ from the lengths of the rows of $\lambda-\beta$ (up to reordering).

                                This implies that the weight corresponding to this list is in the same $\mathfrak{S}_{n-1}$-orbit as $\mu[\lambda-\beta]$, so that
                                \[
                                \mult_{(1,\ldots,1)}\Big(\tilde\lambda_2-\frac{n}{2}+\mathbf{1}_J(2),\ldots,\tilde\lambda_n-\frac{n}{2}+\mathbf{1}_J(n)\Big)=N_{\lambda-\beta}.
                                \]
                                The formula now follows.
                        \end{proof}
                        
                        We prove an additional lemma which will be useful.
                        
                        \begin{lemma}\label{lambda'}
                                Let $\lambda=(\lambda_1\geq\ldots\geq\lambda_n)\in\mathcal{P}(n)$. There is a~partition $\lambda'$ of the form $\lambda'=(n-1,\lambda'_2,\ldots,\lambda'_n)$ with $\lambda'_j\in \{\lambda_j-1,\lambda_j\}$ for all $j\in\{2,\ldots,n\}$, such that $\lambda'\preceq (n-1,n-2,\ldots,1)$.
                        \end{lemma}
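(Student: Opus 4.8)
The plan is to translate the statement into the language of the $\{0,1\}$‑matrices used in Section~\ref{S4.1} (here, the indicator vector of the set of rows to be decreased) together with the ``slack'' of $\lambda$ relative to the staircase, and then to produce $\lambda'$ by a greedy choice of which rows to decrease.

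First I would reformulate. Write $\lambda^0=(n-1,n-2,\ldots,1,0)$ and $d:=(n-1)-\lambda_1\ge 0$. It suffices to find a subset $D\subset\{2,\ldots,n\}$ with $\# D=d$ such that: (i) $\lambda_j\ge 1$ for every $j\in D$; (ii) $D$ meets each maximal constant block of $\lambda$ in a final segment of that block; and (iii) setting $\lambda'_1=n-1$ and $\lambda'_j=\lambda_j-\mathbf{1}_D(j)$ for $j\ge 2$, one has $\lambda'\preceq\lambda^0$. Indeed (i)--(ii) force $\lambda'$ to be a partition with $\lambda'_j\in\{\lambda_j-1,\lambda_j\}$, and $|\lambda'|=\binom n2$ is then automatic. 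Since $\lambda'_1=n-1=\lambda^0_1$, condition (iii) is equivalent, upon setting $h(i):=\sum_{k=1}^i(n-k)-\sum_{k=1}^i\lambda_k$, to the system of inequalities $\#(D\cap\{1,\ldots,i\})\ge d-h(i)$ for all $i\in\{1,\ldots,n\}$; note $h(i)\ge 0$ for all $i$ precisely because $\lambda\in\mathcal{P}(n)$, and $h(1)=d$, so the inequality at $i=1$ reads $0\ge 0$.

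Next I would record the numerical facts that make such a $D$ available. Letting $p$ denote the number of positive parts of $\lambda$, from $\binom n2=\sum_{j\le p}\lambda_j\le\sum_{j\le p}(n-j)$ one obtains $(n-p)(n-p-1)\le 0$, hence $\lambda$ has at least $n-1$ positive parts; consequently $d=(n-1)-\lambda_1\le n-2\le p-1$. The maximal subset satisfying (i)--(ii) is the union of the full final segments of all positive blocks, with row $1$ deleted, and it has $p-1$ elements; since $d\le p-1$ we may therefore define $D$ \emph{greedily from the top}: fill the decreasable rows $\{2,\ldots,|R_1|\}$ of the first block $R_1$ from the bottom upward, then all of the second block, then the third, and so on, stopping as soon as $\# D=d$. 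Conditions (i) and (ii) hold by construction.

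The remaining, and only genuinely technical, step is to check (iii) for this $D$, i.e.\ $d_i:=\#(D\cap\{1,\ldots,i\})\ge d-h(i)$ for all $i$. The decisive computation takes place inside $R_1$: there $\lambda_k=\lambda_1=n-1-d$, so $h(i)=\frac{1}{2}\,i(2d+1-i)$, which, being nonnegative, forces $|R_1|\le 2d+1$; moreover $(i-1)(i-2d)\le 0$ for $1\le i\le 2d$ shows $h(i)\ge d$, hence $d-h(i)\le 0$, on that range, while if $|R_1|=2d+1$ then $d-h(2d+1)=d$ and the greedy rule gives exactly $d_{2d+1}=d$. When $d\le |R_1|-1$ the set $D$ lies inside $R_1$, so $d_i=d$ for $i\ge|R_1|$ and the remaining inequalities reduce to $h(i)\ge 0$. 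When $D$ spills into later blocks one argues block by block, using that inside a block $h(i)-h(i-1)=(n-i)-\lambda_i$ has constant sign governed by $\lambda_i$ versus $n-i$, together with the fact that the greedy ordering has already placed \emph{all} rows of the earlier blocks into $D$, which keeps $d_i$ large enough. I expect this last bookkeeping across block boundaries to be the main obstacle; everything preceding it is routine.
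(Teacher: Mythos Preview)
Your reformulation in terms of the set $D$ and the slack function $h(i)=\sum_{k\le i}(n-k)-\sum_{k\le i}\lambda_k$ is correct, and the case $d\le |R_1|-1$ is handled cleanly. However, the spilling case, which you yourself flag as the main obstacle, is not just bookkeeping: the sketch you give rests on a false premise. You write that ``inside a block $h(i)-h(i-1)=(n-i)-\lambda_i$ has constant sign''; but $\lambda_i$ is constant on a block while $n-i$ is not, so the increment changes sign at $i=n-v_k$ and $h$ is merely \emph{concave} on the block, not monotone. Concavity together with $h(r_{k-1})\ge s_k$ and $h(r_k)\ge 0$ does not by itself give $h(i)\ge s_k$ on $(r_{k-1},\,r_k-s_k]$, which is exactly what you need in sub-case B1. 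One can rescue the argument (for instance by also exploiting $\lambda_j\le\lambda_1$ for $j\le r_{k-1}$ to get the sharper bound $h(i)\ge d-i+1$ on $\{1,\ldots,r_{k-1}\}$ and then a separate estimate on the block), but this is real work, not routine, and as written the proof has a gap.

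By contrast, the paper sidesteps the block analysis entirely. It argues by induction on $d=(n-1)-\lambda_1$: at each step it replaces $\mu_1$ by $\mu_1+1$ and decreases a single, carefully chosen later part by $1$, namely $\mu_{t+1}$ where $t$ is maximal with all partial-sum inequalities strict up to index $t$ (or the last part if $t$ equals the length). This choice automatically keeps the sequence nonincreasing and the dominance condition satisfied, and the induction closes in a few lines. Your greedy, when unwound, makes a different selection of which parts to decrease; both constructions work, but the paper's is easier to verify because it never needs the internal structure of the blocks.
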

                        
                        %First, suppose that $\lambda=(\lambda_1\geq\ldots\geq\lambda_n)\in\mathcal{P}(n)$. This means that, 

                        \begin{proof}
                                Firstly, observe that for all $j$, we have 
                                \[\lambda_1+\ldots+\lambda_j\leq (n-1)+(n-2)+\ldots+(n-j).\]
                                Note that $\lambda_{n-1}\not=0$ because otherwise the previous inequality fails for $j=n-2$.
                                To prove the lmma, we show more generally that if $(\mu_1,\ldots,\mu_k)$ is a~nonincreasing sequence of positive integers such that
                                \begin{itemize}
                                        \item $k>(n-1)-\mu_1$,
                                        \item $\mu_1+\ldots+\mu_j\leq (n-1)+\ldots+(n-j)$ for all $j=1,\ldots,k$,
                                \end{itemize}
                                then there is a~nonincreasing sequence $(\mu'_1,\ldots,\mu'_k)$ of positive integers with
                                \[
                                \left\{
                                \begin{array}{ll}
                                        \mbox{\rm (a)} & \mu'_1=n-1, \\
                                        \mbox{\rm (b)} & \mbox{$\mu'_j\in\{\mu_j-1,\mu_j\}$ for all j=2,\ldots,k,} \\
                                        \mbox{\rm (c)} & \mu'_1+\ldots+\mu'_k=\mu_1+\ldots+\mu_k, \\
                                        \mbox{\rm (d)} & \mbox{$\mu'_1+\ldots+\mu'_j\leq (n-1)+\ldots+(n-j)$ for all j=1,\ldots,k.}
                                \end{array}
                                \right.
                                \leqno{(\star)_\mu}
                                \]
                                Applying this to $\mu=\lambda$ and setting $\lambda'=\mu'$, we get the conclusion of the claim. Hence it suffices to justify the construction of $\mu'$. We do this by induction on $(n-1)-\mu_1$. If $(n-1)-\mu_1=0$ then we set $\mu'=\mu$ and we are done. Now assume that $(n-1)-\mu_1>0$. Take $t\in\{1,\ldots,k\}$ maximal such that 
                                \[\mu_1+\ldots+\mu_j<(n-1)+\ldots+(n-j) \text{ for all } j=1,\ldots,t.\]
                                
                                First case: $t=k$. Then we set \[\tilde\mu=(\tilde\mu_1,\ldots,\tilde\mu_{k-1})=(\mu_1+1,\mu_2,\ldots,\mu_{k-1}).\]
                                Since $(n-1)-\tilde\mu_1<(n-1)-\mu_1$, we can invoke the induction hypothesis which provides us with $\tilde\mu'=(\tilde\mu'_1=n-1,\tilde\mu'_2,\ldots,\tilde\mu'_{k-1})$
                                satisfying $(\star)_{\tilde\mu}$.
                                Finally, we set 
                                \[\mu'=(n-1,\tilde\mu'_2,\ldots,\tilde\mu'_{k-1},\mu_{k}-1)\]
                                which fulfils the conditions.
                                
                                Second case: $t<k$. The maximality of $t$ implies that 
                                \[\mu_1+\ldots+\mu_{t+1}=(n-1)+\ldots+(n-t-1).\]
                                Since the left-hand side is $\leq (t+1)\mu_1$, this easily implies that $t>n-1-\mu_1$.
                                Note also that, in the case where $t+1<k$, the above equality combined with $\sum_{j=1}^{t+2}\mu_j\leq\sum_{j=1}^{t+2}(n-j)$ also forces $\mu_{t+2}<\mu_{t+1}$.
                                Now we set
                                \[
                                \tilde\mu=(\tilde\mu_1,\ldots,\tilde\mu_t)=(\mu_1+1,\mu_2,\ldots,\mu_t).
                                \]
                                Due to the choice of $t$, it is immediate that 
                                \[\tilde\mu_1+\ldots+\tilde\mu_j\leq (n-1)+\ldots+(n-j)\text{ for all }j=1,\ldots,t.\]
                                In addition, we have $t>(n-1)-\tilde\mu_1$ as noted above. On this basis, we can invoke the induction hypothesis which yields a~nonincreasing sequence $\tilde\mu'=(\tilde\mu'_1=n-1,\tilde\mu'_2,\ldots,\tilde\mu'_t)$ satisfying $(\star)_{\tilde\mu}$. Finally we set $\mu'=(n-1,\tilde\mu'_2,\ldots,\tilde\mu'_t,\mu_{t+1}-1,\mu_{t+2},\ldots,\mu_k)$ which satisfies the required conditions.
                                
                                This concludes the proof of the lemma. 
                        \end{proof}

                        With the help of Theorem~\ref{P2} and Lemma \ref{lambda'},
                        we are now in position to prove Proposition~\ref{P4.4}.

                        \begin{proof}[Proof of Proposition~\ref{P4.4}]
                                We have to show that, given $\lambda\in\mathrm{Part}(\binom{n}{2};n)$, the following equivalence holds:
                                \[
                                N_\lambda\not=0\quad\Leftrightarrow\quad \lambda\in\mathcal{P}(n).
                                \]
                                We proceed by induction on $n\geq 1$. The base case is clear, so we now assume the result is true up to $n-1$.

                                We consider the partition $\lambda'$ provided by Lemma \ref{lambda'}. Note that 
                                \[(\lambda'_2,\ldots,\lambda'_n)\preceq(n-2,\ldots,1).\]
                                
                                Moreover, if we denote by $\beta\subset\lambda$ (viewed as a~diagram) the subset formed by the first row and the rightmost box of the $j$-th row of $\lambda$ for every $j\in\{2,\ldots,n\}$ such that $\lambda'_j=\lambda_j-1$, then $\beta$ is a~$1$-marking of $\lambda$ and we have $\lambda-\beta=(\lambda'_2,\ldots,\lambda'_n)$. In view of the induction formula (Theorem~\ref{P2}), $N_{(\lambda'_2,\ldots,\lambda'_n)}$ is one of the terms arising in the sum $N_\lambda$. By induction hypothesis
                                $N_{(\lambda'_2,\ldots,\lambda'_n)}>0$, hence $N_\lambda>0$.
                                
                                Next assume that $\lambda\in\mathrm{Part}(\binom{n}{2};n)\setminus\mathcal{P}(n)$.
                                Let $j$ be minimal such that 
                                \[\lambda_1+\ldots+\lambda_j>(n-1)+\ldots+(n-j).\]
                                
                                If $\lambda'=(\lambda'_2,\ldots,\lambda'_n)$ is a~partition of $\binom{n-1}{2}$ obtained from $\lambda$ by considering any $1$-marking, then we must have 
                                \[\lambda'_2+\ldots+\lambda'_j\geq\lambda_1+\ldots+\lambda_j-(n-1)>(n-2)+\ldots+(n-j),\]
                                and therefore $\lambda'\not\preceq(n-2,\ldots,1)$. This yields $N_{\lambda'}=0$ (by induction hypothesis). Since $N_\lambda$ is a~sum of such terms (by Theorem~\ref{P2}), we conclude that $N_\lambda=0$.
                        \end{proof}
                        
                        \begin{example}\label{E4.14}
                                \begin{enumerate}[(a)]
                                        \item Let $\lambda=(3,3,2,1,1)\in\mathcal{P}(5)$, viewed as the Young diagram
                                        \[
                                        \lambda=\yng(3,3,2,1,1).
                                        \]
                                        Let $p=5$. Then, the $5$-markings of $\lambda$ are
                                        \[
                                        \young(\ \ \bullet,\ \ \bullet,\ \bullet,\ ,\bullet),\quad \young(\ \ \bullet,\ \ \bullet,\ \ ,\bullet,\bullet),\quad
                                        \young(\ \ \bullet,\ \ \ ,\ \bullet,\bullet,\bullet),\quad \young(\ \ \ ,\ \ \bullet,\ \bullet,\bullet,\bullet).
                                        \]
                                        Whence the formula
                                        \[
                                        N_\lambda=N_{(2,2,1,1)}+N_{(2,2,2)}+2N_{(3,2,1)}.
                                        \]
                                        
                                        \item\label{E4.14b} We have computed the numbers $N_\lambda$ associated to the partitions $\lambda\in\mathcal{P}(n)$, for $n\leq 5$.
                                        The values are listed below.
                                        \[
                                        N_{(1)}=1.\leqno{(n=2)}
                                        \]
                                        \[
                                        N_{(2,1)}=N_{(1)}=1;\qquad N_{(1^3)}=2N_{(1)}=2.\leqno{(n=3)}
                                        \]
                                        \[
                                        \begin{array}{ll}
                                                N_{(3,2,1)}=N_{(2,1)}=1; & N_{(3,1^3)}=2N_{(2,1)}=2;\\
                                                N_{(2^3)}=N_{(1^3)}=2; & N_{(2^2,1^2)}=N_{(1^3)}+2N_{(2,1)}=4. 
                                        \end{array}   
                                        \leqno{(n=4)}
                                        \]
                                        
                                        \[
                                        \begin{array}{lllll}
                                                N_{(4,3,2,1)}=N_{(3,2,1)}=1; & N_{(4,3,1^3)}=2N_{(3,2,1)}=2; \\  
                                                N_{(4,2^3)}=N_{(3,1^3)}=2; & N_{(4,2^2,1^2)}=2N_{(3,2,1)}+N_{(3,1^3)}=4; \\
                                                N_{(3^3,1)}=N_{(2^3)}=2; &  N_{(3^2,2^2)}=N_{(2^2,1^2)}=4;\\
                                                N_{(3^2,2,1^2)}=N_{(2^2,1^2)}+N_{(3,1^3)}+2N_{(3,2,1)}=8;& N_{(3,2^3,1)}=N_{(2^3)}+3N_{(2^2,1^2)}=14; \\
                                                N_{(2^5)}=\binom{4}{2}N_{(2^2,1^2)}=24. &
                                        \end{array}
                                        \leqno{(n=5)}
                                        \]

                                        % \[
                                        % \begin{cases}
                                                % N_{(4,3,2,1)}=N_{(3,2,1)}=1;\quad N_{(4,3,1^3)}=2N_{(3,2,1)}=2;\quad N_{(4,2^3)}=N_{(3,1^3)}=2;\\
                                                % N_{(4,2^2,1^2)}=2N_{(3,2,1)}+N_{(3,1^3)}=4;\quad N_{(3^3,1)}=N_{(2^3)}=2;\quad N_{(3^2,2^2)}=N_{(2^2,1^2)}=4;\\
                                                % N_{(3^2,2,1^2)}=N_{(2^2,1^2)}+N_{(3,1^3)}+2N_{(3,2,1)}=8;\quad N_{(3,2^3,1)}=N_{(2^3)}+3N_{(2^2,1^2)}=14;\\
                                                % N_{(2^5)}=\binom{4}{2}N_{(2^2,1^2)}=24.
                                                % \end{cases}
                                        % \leqno{(n=5)}
                                        % \]
                                \end{enumerate}
                        \end{example}

                        \begin{remark}\label{L5}
                                In the above example, we observe that the multiplicity is either $1$ or even. In fact, for every $\lambda\in\mathcal{P}(n)$, we always have the following characterization:
                                \begin{itemize}
                                        \item If $\lambda=(n-1,n-2,\ldots,1)$, that is $\lambda=\lambda^0$, then $N_\lambda=1$ (which actually follows from Lemma~\ref{L1}).
                                        \item If $\lambda\not=\lambda^0$, then $N_\lambda$ is even.
                                \end{itemize}
                                Indeed,
                                arguing by induction on the basis of Theorem~\ref{P2}, it suffices to show that when $\lambda\not=\lambda^0$, the number of $1$-markings $\beta\in M_1(\lambda)$ such that $\lambda-\beta=(n-2,n-3,\ldots,1)$ is even. 
                                
                                In fact, if any, there are exactly two such markings $\beta^1$ and $\beta^2$: the existence of a~marking imposes that $\lambda=(\lambda_1,\ldots,\lambda_n)$ with $\lambda_1=n-2$, $\lambda_{j_0}=n-j_0+1$ for a single $j_0\in\nolinebreak\{2,\ldots,n\}$, and $\lambda_j=n-j$ for $j\in\{2,\ldots,n\}\setminus\{j_0\}$. Then $\beta^1$ consists of the first row of $\lambda$ and the rightmost box of the $j_0$-th row, while $\beta^2$ consists of the first row of $\lambda$ and the rightmost box of the $(j_0-1)$-th row.
                        \end{remark}
                        
                        \subsection{Enumerative formula for multiplicities}\label{section-4.4}
                        
                        We give an enumerative formula for the multiplicities $\mult_\spin(\mu)$, or equivalently for the numbers $N_\lambda$.\ We rely on the following combinatorial definition.
                        
                        \begin{definition}\label{D-T}
                                \begin{enumerate}
                                        \item\label{D-Ta} Let $\lambda=(\lambda_1,\ldots,\lambda_n)\in\mathcal{P}(n)$. A \emph{spin tableau}  of shape $\lambda$ is a~tableau $\tau$ of shape $\lambda$ which fulfils the following conditions:
                                        \begin{itemize}
                                                \item For every $i\in\{1,\ldots,n-1\}$, $\tau$ contains $i$ boxes of entry $i$, all located within the first $i+1$ rows;
                                                \item The rows (resp. columns) of $\tau$ are nondecreasing from left to right (resp. top to bottom);
                                                moreover, on the $i$-th row, the entries $\geq i$ are increasing.
                                        \end{itemize}
                                        Let $\mathcal{ST}(\lambda)$ be the set of spin tableaux of shape $\lambda$.
                                        \item\label{D-Tb} We label certain boxes of $\tau$ with a~binomial coefficient:
                                        in the $j$-th column,
                                        for each number $i$ that occurs in the column, we label the last box of entry $i$ with the binomial coefficient $\binom{a}{b}$ where
                                        \begin{itemize}
                                                \item $b=b_{i,j}:=\#\{\mbox{boxes of entry $=i$ within the first $i$ boxes of the column}\}$,
                                                \item $a=a_{i,j}:=b+\#\left\{\begin{array}{c}
                                                        \text{boxes of entry } <i \text{ of the column, whose} \\
                                                        \quad \text{ right neighbor (if any) is }>i
                                                \end{array}\right\}$
                                        \end{itemize}
                                        Finally we define $N_\tau$ as the product of these binomial coefficients
                                        taken over the boxes of the whole tableau $\tau$.
                                \end{enumerate}

                        \end{definition}
                        
                        The definition makes sense whenever $n\geq 1$. Note that the only element of $\mathcal{P}(1)$ is the trivial partition $\lambda=(0)$; the unique element of $\mathcal{ST}(0)$ is then the empty tableau $\tau=\emptyset$, with $N_\emptyset=1$.
                        
                        \begin{theorem}\label{theorem-tableaux}
                                Let $\lambda\in\mathcal{P}(n)$.\ With the above notation, we have
                                \[
                                \mult_\spin(\mu[\lambda])=N_\lambda=\sum_{\tau\in\mathcal{ST}(\lambda)}N_\tau.
                                \]
                        \end{theorem}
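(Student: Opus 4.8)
The plan is to prove the statement by induction on $n$, using the inductive formula of Theorem~\ref{P2} for the choice $p=n$. (Recall that $N_\lambda$ is \emph{defined} as $\mult_\spin(\mu[\lambda])$ in Section~\ref{section-4.3}, so the content is the identity $N_\lambda=\sum_{\tau\in\mathcal{ST}(\lambda)}N_\tau$.) The base case $n=1$ is trivial: $\mathcal{P}(1)=\{(0)\}$ and $\mathcal{ST}((0))=\{\emptyset\}$ with $N_\emptyset=1=N_{(0)}$.

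For the inductive step, fix $\lambda\in\mathcal{P}(n)$ and analyse how a spin tableau $\tau\in\mathcal{ST}(\lambda)$ behaves when the boxes carrying the maximal entry $n-1$ are deleted. First I would check that the set $\beta_\tau$ of such boxes is always an $n$-marking (Notation~\ref{notation-markings}): by Definition~\ref{D-T} there are $n-1$ of them, lying in the first $n$ rows; a row of index $i<n$ contains at most one of them (the ``entries $\geq i$ increasing'' condition), which is then its rightmost box since rows are nondecreasing and $n-1$ is maximal; and the last row $n$, being forced to consist only of entries $\geq n-1$, is entirely filled with $n-1$. Deleting $\beta_\tau$ and re-sorting the rows produces a partition $\mu_\tau:=\lambda-\beta_\tau$ of $\binom{n-1}{2}$ carrying the residual filling $\bar\tau$ by the entries $1,\dots,n-2$; I would then verify that $\bar\tau\in\mathcal{ST}(\mu_\tau)$, the delicate point being the compatibility of the row-reordering with the conditions of Definition~\ref{D-Ta} (which now refer to the row indices of $\mu_\tau$). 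Conversely, writing $m_{\lambda,\mu}:=\#\{\beta\in M_n(\lambda):\lambda-\beta=\mu\}$, I would show that for each partition $\mu$ with $m_{\lambda,\mu}>0$ and each $\bar\tau\in\mathcal{ST}(\mu)$, exactly one of the $m_{\lambda,\mu}$ ways of re-inflating $\bar\tau$ to shape $\lambda$ — choose such a marking $\beta$, place the rows of $\bar\tau$ into the row-slots of $\lambda\setminus\beta$, and fill the boxes of $\beta$ with $n-1$ — yields a valid element of $\mathcal{ST}(\lambda)$, the column-monotonicity condition pinning down the correct marking. Together these give a bijection
\[
\mathcal{ST}(\lambda)\ \xrightarrow{\ \sim\ }\ \bigsqcup_{\mu:\,m_{\lambda,\mu}>0}\mathcal{ST}(\mu),\qquad\tau\longmapsto(\mu_\tau,\bar\tau),
\]
with the convention $\mathcal{ST}(\mu)=\emptyset$ for $\mu\notin\mathcal{P}(n-1)$.

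The second ingredient is the weight identity $N_\tau=m_{\lambda,\mu_\tau}\cdot N_{\bar\tau}$. On one hand, $m_{\lambda,\mu}$ is itself a product of binomial coefficients: grouping the rows of $\lambda$ into maximal runs of equal length, a marking $\beta$ with $\lambda-\beta=\mu$ amounts to choosing, inside each run, which of its rows lose their rightmost box (the number of them being dictated by $\mu$), so that $m_{\lambda,\mu}=\prod_{\text{runs }b}\binom{m_b}{k_b}$. On the other hand, one computes that the labels $\binom{a}{b}$ that Definition~\hyperref[D-Tb]{4.9(b)} attaches to the entry-$(n-1)$ boxes of $\tau$ are exactly these binomials $\binom{m_b}{k_b}$ — this is precisely why the quantity $a$ counts the boxes of smaller entry whose right neighbour \emph{if any} is $>i$, so that a box sitting at the end of its row contributes. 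Finally I would check that deleting the entry-$(n-1)$ boxes and re-sorting does not change the labels borne by the surviving boxes, so that $N_\tau$ factors as the product of the labels on the entry-$(n-1)$ boxes times $N_{\bar\tau}$, i.e.\ $N_\tau=m_{\lambda,\mu_\tau}N_{\bar\tau}$.

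Granting all this, the induction closes immediately:
\[
\sum_{\tau\in\mathcal{ST}(\lambda)}N_\tau=\sum_{\mu:\,m_{\lambda,\mu}>0}m_{\lambda,\mu}\!\!\sum_{\bar\tau\in\mathcal{ST}(\mu)}\!\!N_{\bar\tau}=\sum_\mu m_{\lambda,\mu}\,N_\mu=\sum_{\beta\in M_n(\lambda)}N_{\lambda-\beta}=N_\lambda,
\]
the second equality being the induction hypothesis (with $N_\mu=0$ for $\mu\notin\mathcal{P}(n-1)$), the third a regrouping of markings by the partition they produce, and the last Theorem~\ref{P2}. I expect the main obstacle to be the bijection of the second paragraph, and specifically its two reordering aspects: proving that the deletion of maximal entries followed by re-sorting the rows genuinely lands in $\mathcal{ST}(\mu_\tau)$, and proving that the inverse inflation is single-valued — that among the $m_{\lambda,\mu_\tau}$ candidate markings the column condition selects a unique one compatible with a given $\bar\tau$. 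The tracking of the binomial labels under deletion (the last claim above) should be routine but will need a careful case distinction on how the ``right neighbour'' counts are affected when a rightmost box is deleted from a row.
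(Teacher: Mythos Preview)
Your approach is correct and is essentially the paper's own proof: induction on $n$ via Theorem~\ref{P2} with $p=n$, identification of the entry-$(n{-}1)$ boxes as an $n$-marking, the factorisation $N_\tau=m_{\lambda,\mu_\tau}\,N_{\bar\tau}$, and the identification of $m_{\lambda,\mu_\tau}$ with the product $\prod_j\binom{a_{n-1,j}}{b_{n-1,j}}$ of the entry-$(n{-}1)$ labels. The paper frames the bijection as a partition of $\mathcal{ST}(\lambda)$ into equivalence classes indexed by the realised markings $\beta(\tau)$, while you index by the residual partitions $\mu$; these agree because distinct realised markings yield distinct $\mu$'s.

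Your anticipated re-sorting obstacle dissolves: since columns are nondecreasing and $n-1$ is the maximal entry, if rows $i$ and $i+1$ have the same length and the rightmost box of row $i$ carries entry $n-1$, then so must the box directly below it. Hence within each block of equal-length rows the rows whose rightmost box lies in $\beta(\tau)$ form a terminal segment, so $\lambda\setminus\beta(\tau)$ already has nonincreasing row lengths; $\bar\tau$ sits on $\mu_\tau$ with unchanged row indices, and the spin-tableau conditions (which refer to row indices) transfer verbatim. The same observation shows that among the $m_{\lambda,\mu}$ markings with $\lambda-\beta=\mu$, exactly the terminal-segment one can arise as $\beta(\tau)$, giving the single-valued inflation you wanted. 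The paper asserts the key step ``$\tau|_{<n-1}\in\mathcal{ST}(\lambda-\beta(\tau))$'' without isolating this point, so your instinct to flag it was sound; it is simply easier than you feared.
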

                        
                        \begin{proof}
                                We prove the result by induction on $n\geq 1$. For $n=1$, the claim is true in view of the discussion above. Now assume that the formula is established up to $n-1\geq 1$.
                                
                                Recall that $M_n(\lambda)$ denotes the set of $n$-markings of $\lambda$ (see Notation~\ref{notation-markings}).
                                If $\tau\in\mathcal{ST}(\lambda)$, then note that the subset $\beta(\tau)\subset\lambda$ formed by the boxes with entry $n-1$ in $\tau$ is an $n$-marking of $\lambda$. Indeed, the first condition in Definition~\ref{D-T} implies that $\tau$ contains $n-1$ boxes of entry $n-1$ and that every box of the $n$-th row has entry $n-1$, whereas the second condition implies that in each of the first $n-1$ rows only the rightmost box can have entry $n-1$.
                                
                                An $n$-marking $\beta$ of $\lambda$ is such that $\lambda-\beta=\lambda-\beta(\tau)$ if and only if $\beta'$ and $\beta'(\tau)$ have the same number of boxes in each column, where $\beta'\subset\beta$, resp. $\beta'(\tau)\subset\beta(\tau)$, denotes the subset formed by the boxes of the marking which are in the first $n-1$ rows.
                                
                                The number of boxes of $\beta'(\tau)$ in the $j$-th column is the number $b_{n-1,j}$ and the number of boxes of the $j$-th column which are the rightmost box of a~row (apart from the $n$-th row) is $a_{n-1,j}$. This implies that the number of $n$-markings of $\lambda$ such that $\lambda-\beta=\lambda-\beta(\tau)$ is
                                \[
                                \#\{\beta\in M_n(\lambda):\lambda-\beta=\lambda-\beta(\tau)\}=\prod_{j\geq 1}\binom{a_{n-1,j}}{b_{n-1,j}}.
                                \]
                                
                                Note that two tableaux $\tau,\tau'$ yield the same marking $\beta(\tau)=\beta(\tau')$ if and only if they differ only by their subtableaux $\tau|_{< n-1},\tau'|_{< n-1}$ formed by the entries $< n-1$, which in fact belong to $\mathcal{ST}(\lambda-\beta(\tau))$.
                                
                                Moreover, we have
                                \[
                                N_{\tau}=N_{\tau|_{<n-1}}\cdot \prod_{j\geq 1}\binom{a_{n-1,j}}{b_{n-1,j}}.
                                \]
                                
                                Let $\mathcal{ST}(\lambda)=\overline{\tau_1}\sqcup\ldots\sqcup\overline{\tau_k}$ be the decomposition into classes for the equivalence relation defined by letting $\tau\sim\tau'$ if $\beta(\tau)=\beta(\tau')$. For all $i\in\{1,\ldots,k\}$, let $\beta_i=\beta(\tau_i)$. Then
                                
                                \begin{eqnarray*}
                                        \sum_{\tau\in\mathcal{ST}(\lambda)}N_\tau & = & \sum_{i=1}^k \sum_{\tau\in\overline{\tau_i}} N_{\tau} \\*
                                        & = & \sum_{i=1}^k \sum_{\tau\in\overline{\tau_i}} N_{\tau|_{< n-1}}\cdot\#\{\beta\in M_n(\lambda):\lambda-\beta=\lambda-\beta_i\} \\
                                        & = & \sum_{i=1}^k \#\{\beta\in M_n(\lambda):\lambda-\beta=\lambda-\beta_i\}\sum_{\sigma\in \mathcal{ST}(\lambda-\beta_i)}N_\sigma \\
                                        & = & \sum_{\beta\in M_n(\lambda)}\sum_{\sigma\in\mathcal{ST}(\lambda-\beta)}N_\sigma \\
                                        & = & \sum_{\beta\in M_n(\lambda)}N_{\lambda-\beta}\qquad\mbox{(by induction hypothesis)} \\
                                        & = & N_{\lambda}\qquad\mbox{(by Theorem~\ref{P2}).}
                                \end{eqnarray*}
                                This concludes the proof of the theorem.
                        \end{proof}
                        
                        \begin{example}
                                \begin{enumerate}
                                        \item For $n=5$, let $\lambda=(2^5)$. There are two spin tableaux of shape $\lambda$, namely
                                        \[
                                        \tau_1=\young(12,23,34,34,44),\quad \tau_2=\young(13,23,24,34,44).
                                        \]
                                        We have $N_{\tau_1}=\binom{a_{3,2}}{b_{3,2}}\binom{a_{4,2}}{b_{4,2}}=\binom{2}{1}\binom{4}{2}$
                                        and $N_{\tau_2}=\binom{a_{2,1}}{b_{2,1}}\binom{a_{4,2}}{b_{4,2}}=\binom{2}{1}\binom{4}{2}$
                                        (keeping track only of binomial coefficients $>1$), hence $N_\lambda=N_{\tau_1}+N_{\tau_2}=\binom{2}{1}\binom{4}{2}+\binom{2}{1}\binom{4}{2}=24$.
                                        \item For $n=7$, let $\lambda=(3^7)$. The spin tableaux of this shape are
                                        \[
                                        \mbox{\footnotesize \young(123,234,345,456,456,556,666),\quad \young(124,234,345,356,456,556,666),\quad \young(123,245,345,346,456,556,666),\quad \young(124,235,345,346,456,556,666),\quad \young(125,235,345,346,446,556,666),\quad \young(134,234,235,456,456,556,666),\quad
                                                \young(134,234,245,356,456,556,666),\quad
                                                \young(134,235,245,346,456,556,666),\quad \young(135,235,245,346,446,556,666).}
                                        \]
                                        Correspondingly, we get
                                        \begin{eqnarray*}
                                                N_\lambda & = & \Big[\binom{2}{1}\binom{3}{1}+\binom{2}{1}\binom{3}{1}+\binom{2}{1}\binom{2}{1}\binom{3}{2}+\binom{2}{1}\binom{3}{2}\binom{2}{1}\binom{3}{2}+\binom{2}{1}\binom{4}{2} \\
                                                & & \quad +\binom{2}{1}\binom{3}{1}+\binom{2}{1}\binom{3}{1}+\binom{2}{1}\binom{3}{2}\binom{2}{1}\binom{3}{2}+\binom{2}{1}\binom{4}{2}\Big]\binom{6}{3} \\
                                                & = & [6+6+12+36+12+6+6+36+12]\times 20=2640
                                        \end{eqnarray*}
                                        which is therefore the multiplicity of the zero weight in $\spin$.
                                \end{enumerate}
                        \end{example}
                        
                        \subsection{Two special cases}
                        
                        \subsubsection{A multiplicative formula for weights} Our aim is to exploit Proposition~\ref{L3.8-new} in order to obtain the following combinatorial relation.
                        
                        \begin{proposition}\label{P-5.13}
                                Let $\mu\in\mathrm{Part}(\binom{m}{2};m)$ and $\pi\in\mathrm{Part}(\binom{p}{2};p)$ with $\pi_1\leq p+\mu_m$, then
                                \[
                                \textstyle \lambda:=(p+\mu_1,\ldots,p+\mu_m,\pi_1,\ldots,\pi_p)\in\mathrm{Part}(\binom{n}{2};n)
                                \ \ \mbox{where}\ \ n=m+p.
                                \]
                                Moreover, $\lambda\in\mathcal{P}(n)$ if and only if $\mu\in\mathcal{P}(m)$ and $\pi\in\mathcal{P}(p)$,
                                and we have
                                \[
                                N_\lambda=N_\mu\times N_\pi.
                                \]
                        \end{proposition}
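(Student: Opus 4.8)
The first thing to verify is that $\lambda$ lies in $\mathrm{Part}(\binom{n}{2};n)$. The listed sequence is nonincreasing because $\mu,\pi$ are partitions and $p+\mu_m\geq\pi_1$ by hypothesis, and its total is $mp+\binom{m}{2}+\binom{p}{2}=\binom{n}{2}$. For the equivalence $\lambda\in\mathcal{P}(n)\Leftrightarrow(\mu\in\mathcal{P}(m)$ and $\pi\in\mathcal{P}(p))$, I would just rewrite the inequalities of Definition~\ref{D-partitions}(b): for $1\leq i\leq m$, the condition $\lambda_1+\cdots+\lambda_i\leq(n-1)+\cdots+(n-i)$ becomes, after cancelling $ip$ from both sides, exactly $\mu_1+\cdots+\mu_i\leq(m-1)+\cdots+(m-i)$; and since $\lambda_1+\cdots+\lambda_m=mp+\binom{m}{2}=(n-1)+\cdots+(n-m)$, for $i=m+j$ with $1\leq j\leq p$ it reduces to $\pi_1+\cdots+\pi_j\leq(p-1)+\cdots+(p-j)$. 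So the two families of conditions match. (Once the product formula below is available, this equivalence also drops out of Proposition~\ref{P4.4}.)

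For the formula $N_\lambda=N_\mu N_\pi$ I would use the matrix model of Section~\ref{S4.1}. By Remark~\ref{R-matrix-partition} and the reformulation of multiplicities in the proof of Theorem~\ref{T-inductive-Levi}, $N_\lambda$ equals the number of matrices $a\in\mathcal{M}_n(\{0,1\})$ with $a_{i,i}=0$, $a_{i,j}+a_{j,i}=1$ for $i\neq j$, and row sums $a_{i,*}=\lambda_i$. Split such an $a$ into four blocks of sizes $m$ and $p$. Since $a_{i,j}+a_{j,i}=1$ for all distinct $i,j\in\{1,\ldots,m\}$, the entries of the top-left block sum to $\binom{m}{2}$, whence
\[
mp+\binom{m}{2}=\sum_{i=1}^m\lambda_i=\sum_{i=1}^m a_{i,*}=\binom{m}{2}+\#\{(i,j):i\leq m<j,\ a_{i,j}=1\},
\]
which forces $a_{i,j}=1$ (hence $a_{j,i}=0$) for every $i\leq m<j$. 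Thus both off-diagonal blocks of $a$ are determined; the top-left block $a^{(11)}$ is then an admissible matrix with row sums $\lambda_i-p=\mu_i$, the bottom-right block $a^{(22)}$ an admissible matrix with row sums $\lambda_{m+j}=\pi_j$, and $a\mapsto(a^{(11)},a^{(22)})$ is a~bijection onto the product of the corresponding matrix sets, of cardinalities $N_\mu$ and $N_\pi$ by Remark~\ref{R-matrix-partition} applied to $\mathfrak{sl}(m)$ and $\mathfrak{sl}(p)$. Hence $N_\lambda=N_\mu N_\pi$.

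In root-theoretic terms this is Proposition~\ref{L3.8-new} in action, for the mutually orthogonal subsets $I_1=\{\alpha_1,\ldots,\alpha_{m-1}\}$ and $I_2=\{\alpha_{m+1},\ldots,\alpha_{n-1}\}$ of $\Delta$: the forced vanishing above means that every $A\subset\Phi^+$ with $\mu(A)=\mu[\lambda]$ is contained in $\Phi^+_{I_1}\cup\Phi^+_{I_2}$, and then Lemma~\ref{L3.7-new} and Proposition~\ref{L3.8-new} identify the two factors with $N_\mu$ and $N_\pi$. The crux is exactly this forcing step — that prescribing row sums $p+\mu_i$ on the first $m$ rows leaves no freedom in the cross blocks. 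The count above settles it; equivalently one may pair the identity $\sum_{\alpha\in A}\alpha=\rho(\fg)-\mu[\lambda]$ of Lemma~\ref{L1} with $h\in\ft$ defined by $\epsilon_i(h)=p$ for $i\leq m$ and $\epsilon_i(h)=-m$ for $i>m$: it kills $\rho(\fg)-\mu[\lambda]$ while taking the value $n>0$ on every crossing root $\epsilon_i-\epsilon_j$ ($i\leq m<j$), so $A$ can contain no crossing root. Everything else is routine bookkeeping of partial and row sums.
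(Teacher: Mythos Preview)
Your proof is correct and follows essentially the same route as the paper. The paper verifies the partition and $\mathcal{P}(n)$ conditions exactly as you do, then constructs one explicit block matrix $a=\begin{pmatrix}a^{(1)}&\mathbf{1}\\\mathbf{0}&a^{(2)}\end{pmatrix}$ and invokes Proposition~\ref{L3.8-new} for the orthogonal subsets $I_1,I_2$ you name; your direct counting argument --- that the row-sum constraint $\sum_{i\leq m}\lambda_i=mp+\binom{m}{2}$ forces every admissible matrix to have this block shape, yielding a bijection onto pairs --- is a slightly more self-contained variant of the same idea, and you correctly identify it as an instance of Proposition~\ref{L3.8-new} in your final paragraph.
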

                        
                        \begin{proof}
                                Clearly, $\lambda\in\mathrm{Part}(\binom{n}{2};n)$.
                                By Definition~\ref{D-partitions},
                                noting that 
                                \[\sum_{i=1}^m \lambda_i=mp+\binom{m}{2}=\sum_{i=1}^m (n-i),\]
                                the condition that $\lambda\in\mathcal{P}(n)$ is equivalent to having
                                $\sum_{i=1}^j \mu_i\leq\sum_{i=1}^j (m-i)$ for all $j\in\nolinebreak\{1,\ldots,m-1\}$
                                and
                                $\sum_{i=1}^{j-m}\pi_{i}\leq\sum_{i=1}^{j-m}(p-i)$ for all $j\in\{m+1,\ldots,m+p\}$,
                                which is equivalent to saying that $\mu\in\mathcal{P}(m)$ and $\pi\in\mathcal{P}(p)$.
                                
                                The last formula of the proposition is clear if $\lambda\notin\mathcal{P}(n)$ in view of Proposition~\ref{P4.4}. So assume that $\lambda\in\mathcal{P}(n)$, that is $\mu\in\mathcal{P}(m)$ and $\pi\in\mathcal{P}(p)$. 
                                Again, by Proposition~\ref{P4.4}, since $N_\mu\not=0$ and $N_\pi\not=0$, we can find matrices $a^{(1)}\in\mathcal{M}_m(\{0,1\})$ and $a^{(2)}\in\mathcal{M}_p(\{0,1\})$ as in Remark~\ref{R-matrix-partition}, and such that $\mu_i=a^{(1)}_{i,*}$ for all $i=1,\ldots,m$ and $\pi_j=a^{(2)}_{j,*}$ for all $j=1,\ldots,p$. Let
                                \[
                                a=
                                \begin{pmatrix}
                                        a^{(1)} & \mathbf{1} \\ \mathbf{0} & a^{(2)}
                                \end{pmatrix}
                                \]
                                where $\mathbf{1}$ and $\mathbf{0}$ represent blocks whose coefficients are all equal to $1$, resp. $0$. Then $a_{i,*}=\lambda_i$ for all $i=1,\ldots,n$. Moreover, the subset $A\subset \Phi^+$ corresponding to $a$ in the sense of Remark~\ref{R-matrix-partition} is of the form $A=A_1\cup A_2$ where $A_1\subset\{\epsilon_i-\epsilon_j:1\leq i<j\leq m\}$ and $A_2\subset\{\epsilon_i-\epsilon_j:m< i<j\leq n\}$ correspond to $a^{(1)}$ and $a^{(2)}$, respectively. By Proposition~\ref{L3.8-new}, we deduce:
                                \[
                                N_\lambda=\mult_\spin(\mu(A))=\mult_\spin(\mu(A_1))\times\mult_\spin(\mu(A_2))=N_\mu\times N_\pi.
                                \]
                                This concludes the proof.
                        \end{proof}
                        
                        \begin{example}
                                If $\mu=(2,2,1,1)\in\mathcal{P}(4)$ and $\pi=(1,1,1)\in\mathcal{P}(3)$, then we get that $\lambda=(5,5,4,4,1,1,1)\in\mathcal{P}(7)$ and $N_\lambda=N_\mu\times N_\pi=4\times 2=8$.
                        \end{example}
                        
                        \subsubsection{The case of a~shift of \texorpdfstring{$\rho(\fg)$}{p(fg)} by a~single root}
                        
                        Here, we consider a~weight of the form $\mu=\rho(\fg)-\alpha$ where $\alpha\in\Phi^+$ is a~positive root. If $\alpha$ is a~simple root, then $\mu=s_\alpha(\rho(\fg))$, $\mult_\spin(\mu)=\mult_\spin(\rho(\fg))=1$, and $\mu$ is not dominant.
                        In the case where $\alpha$ is not simple, we show that $\mu$ is always dominant and we compute its multiplicity.
                        
                        \begin{proposition}\label{P-5.15}
                                Let $\alpha=\epsilon_i-\epsilon_j$ with $1\leq i<j\leq n$ and $j-i>1$, so that $\alpha$ is not a~simple root.
                                Then $\mu:=\rho(\fg)-\alpha$ is a~dominant weight. Under the notation of Definition~\ref{mulambda},
                                we have $\mu=\mu[\lambda]$ where $\lambda=(\lambda_1,\ldots,\lambda_n)\in\mathrm{Part}(\binom{n}{2};n)$
                                is the partition/Young diagram
                                obtained from $\lambda^0 =(n-1,n-2,\ldots,0)$ by moving one box from the $i$-th row to the $j$-th row. Moreover, $\mult_\spin(\mu)=2^{j-i-1}$.
                        \end{proposition}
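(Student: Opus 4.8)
The plan is to establish the three assertions — dominance of $\mu$, the identity $\mu=\mu[\lambda]$, and the value $2^{j-i-1}$ of the multiplicity — more or less independently, the first two by a weight computation and the third by a combinatorial count.

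First I would expand $\alpha=\epsilon_i-\epsilon_j$ in the fundamental weight basis. Using $\alpha_k=2\varpi_k-\varpi_{k-1}-\varpi_{k+1}$ (with the convention $\varpi_0=\varpi_n=0$) and $\epsilon_i-\epsilon_j=\alpha_i+\cdots+\alpha_{j-1}$, a short telescoping gives
\[
\epsilon_i-\epsilon_j=-\varpi_{i-1}+\varpi_i+\varpi_{j-1}-\varpi_j.
\]
Hence $\mu=\rho(\fg)-\alpha=\sum_{k=1}^{n-1}\varpi_k-\alpha$ has all coordinates $\geq 0$ in the $\varpi$-basis: the coordinate equals $2$ at $k=i-1$ and at $k=j$ (whenever these indices lie in $\{1,\dots,n-1\}$), it equals $0$ at $k=i$ and at $k=j-1$ (two distinct indices, since $j-i>1$), and $1$ otherwise; so $\mu$ is dominant. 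Comparing with $\mu[\lambda]=\sum_k(\lambda_k-\lambda_{k+1})\varpi_k$ from Definition~\ref{mulambda}, the equality $\mu=\mu[\lambda]$ amounts to $\lambda_k-\lambda_{k+1}$ being exactly the $k$-th coordinate just described; a direct check confirms this for the diagram $\lambda$ obtained from $\lambda^0=(n-1,\dots,1,0)$ by moving one box from row $i$ to row $j$, and that this $\lambda$ is genuinely weakly decreasing precisely because $j-i>1$ (for $j=i+1$ the rows $i$ and $j$ would come out of order).

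For the multiplicity I would invoke Lemma~\ref{L1}(b), which gives $\mu(A)=\rho(\fg)-\sum_{\beta\in A}\beta$ for $A\subset\Phi^+$, together with the counting description of $\mult_\spin$ following \eqref{spinweight}, so that
\[
\mult_\spin(\mu)=\#\Big\{A\subset\Phi^+ : \textstyle\sum_{\beta\in A}\beta=\epsilon_i-\epsilon_j\Big\}.
\]
To a root $\epsilon_a-\epsilon_b\in\Phi^+$ I attach the integer interval $[a,b)=\{a,\dots,b-1\}$; since $\epsilon_a-\epsilon_b=\alpha_a+\cdots+\alpha_{b-1}$, the coefficient of $\alpha_k$ in $\sum_{\beta\in A}\beta$ is the number of these intervals containing $k$, so the requirement $\sum_{\beta\in A}\beta=\alpha_i+\cdots+\alpha_{j-1}$ says exactly that the intervals attached to the roots of $A$ cover each of $i,i+1,\dots,j-1$ exactly once and no other integer. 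In particular every such root has both indices in $\{i,\dots,j\}$ (consistently with Lemma~\ref{L3.7-new}), and the intervals form a partition of $\{i,\dots,j-1\}$ into contiguous blocks. The assignment $A\mapsto$ (its family of blocks) is then a bijection between the admissible sets $A$ and the partitions of $\{i,\dots,j-1\}$ into contiguous blocks, and the latter are parametrized by arbitrary subsets of the $j-i-1$ gaps between consecutive elements; hence $\mult_\spin(\mu)=2^{j-i-1}$.

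The main obstacle is the combinatorial translation in the last step: one must argue carefully that the root identity forces every root of $A$ to have both indices in $\{i,\dots,j\}$, and that the covering multiplicities all being equal to $1$ forces the associated intervals to be pairwise disjoint and to tile $\{i,\dots,j-1\}$. This is elementary once the coefficient of each simple root $\alpha_k$ is read off as the interval-covering count above; the rest is bookkeeping, and as a sanity check the formula reproduces $N_{(2^3)}=2$ and $N_{(2^2,1^2)}=4$ for $\fg=\mathfrak{sl}(4)$ (cf. Example~\ref{E4.14}).
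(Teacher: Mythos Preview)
Your argument is correct, and for the multiplicity it takes a genuinely different route from the paper. The paper proves $N_\lambda=2^{j-i-1}$ by induction on $n$ via Theorem~\ref{P2}: when $i>1$ or $j<n$ a single $1$- or $n$-marking reduces to the same shape in $\mathfrak{sl}(n-1)$, and in the remaining case $i=1$, $j=n$ one lists all $1$-markings of $(n-2,n-2,n-3,\ldots,2,1,1)$ and sums the resulting $N_{\lambda^{(q)}}$ to obtain $2^{n-3}+\cdots+2+1+1=2^{n-2}$. You instead count directly the subsets $A\subset\Phi^+$ with $\sum_{\beta\in A}\beta=\epsilon_i-\epsilon_j$, identifying each root $\epsilon_a-\epsilon_b$ with the interval $[a,b)$ and observing that the simple-root multiplicities force these intervals to tile $\{i,\ldots,j-1\}$; this is a bijection with compositions of $j-i$, giving $2^{j-i-1}$ immediately. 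Your approach is shorter and more self-contained (it bypasses the inductive machinery of Theorem~\ref{P2} entirely), while the paper's approach has the virtue of illustrating how Theorem~\ref{P2} is used in practice. For the dominance and the identification $\mu=\mu[\lambda]$, the paper argues via the matrix encoding of Remark~\ref{R-matrix-partition}, whereas your fundamental-weight computation is equally valid and arguably more transparent.
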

                        
                        \begin{proof}
                                In the sense of Remark~\ref{R-matrix-partition}, the weight $\rho(\fg)=\mu[\lambda^0 ]$ corresponds to the matrix $a^0 \in\mathcal{M}(\fq)$ given by $a^0_{k,\ell}=1$ if $k<\ell$ and $a_{k,\ell}=0$ otherwise, while $\mu=\rho(\fg)-(\epsilon_i-\epsilon_j)$ corresponds to the matrix $a\in \mathcal{M}(\fq)$ with $a_{i,j}=0$, $a_{j,i}=1$, and $a_{k,\ell}=a^0_{k,\ell}$ for any other pair $(k,\ell)$. Since $\mu=\mu[\lambda]$ with $\lambda=(a_{1,*},\ldots,a_{n,*})$, we deduce that $\lambda$ is obtained from $\lambda^0$ as described in the statement.

                                It remains to show that $N_\lambda=2^{j-i-1}$, we proceed by induction on $n$. If $i>1$ or $j<n$, then we can apply Theorem~\ref{P2} with $p=1$ or $p=n$, so that the claim follows from the induction hypothesis. Hence it remains to deal with the case where $i=1$, $j=n$, that is, $\lambda=(n-2,n-2,n-3,n-4,\ldots,3,2,1,1)$.
                                We aim to apply Theorem~\ref{P2} with $p=1$. A $1$-marking $\beta$ of $\lambda$ is obtained by marking the first row of $\lambda$ and an additional box in the $q$-th row for $q\in\{2,\ldots,n\}$. Then let $\lambda^{(q)}$ be the Young diagram $\lambda-\beta$ so obtained. We have
                                \[
                                N_\lambda=\sum_{q=2}^n N_{\lambda^{(q)}}.
                                \]
                                For $q\in\{n-1,n\}$, we have $\lambda^{(q)}=(n-2,n-3,\ldots,2,1,0)$ hence $N_{\lambda^{(q)}}=1$ in this case. For
                                $2\leq q\leq n-2$, we have 
                                \[\lambda^{(q)}=(n-2,n-3,\ldots,n-(q-1),n-(q+1),n-(q+1),n-(q+2),\ldots,3,2,1,1),\]
                                hence $\lambda^{(q)}$ is obtained from $(n-2,n-3,\ldots,2,1,0)$ by moving one box from the $(q-1)$-th row to the $(n-1)$-th row. Hence $N_{\lambda^{(q)}}=2^{n-q-1}$ due to the induction hypothesis. This yields
                                \[
                                N_\lambda=2^{n-3}+2^{n-4}+\ldots+2^2 +2+1+1=2^{n-2}
                                \]
                                as claimed. The proof is complete.
                        \end{proof}
                        
                        \begin{remark}
                                All multiplicities ($\not=1$) computed in Example~\hyperref[E4.14b]{4.7(b)} that are powers of 2 are multiplicities of weights of the form $\rho(\fg)-\alpha$ with $\alpha$ a~non-simple positive root.
                        \end{remark}
                        
                        \section{The case of maximal parabolic subalgebras for \texorpdfstring{$\mathfrak{g}=\mathfrak{sl}(n)$}{g=sln}}\label{section6}
                        
                        In this section we assume that $\mathfrak{h}$ is a~Levi subalgebra of a~maximal parabolic subalgebra.
                        Specifically we can assume that
                        \[
                        \mathfrak{h}=\fh(p,q)=\left\{
                        \begin{pmatrix}
                                x & 0 \\ 0 & y
                        \end{pmatrix}
                        :x\in\gl(p),\ y\in\gl(q)\right\}
                        \]
                        and
                        \[
                        \mathfrak{q}=\left\{
                        \begin{pmatrix}
                                0 & z \\ t & 0
                        \end{pmatrix}
                        :z\in\mathcal{M}_{p,q}(\mathbb{C}),\ t\in\mathcal{M}_{q,p}(\mathbb{C})\right\}.
                        \]
                        We use the notation introduced in Section~\ref{S4.1}. Thus, the composition of $n$ associated to $\mathfrak{h}$ is $c=(p,q)$, and we have
                        \[
                        \sigma=(\underbrace{q,\ldots,q}_{\mbox{\scriptsize $p$ times}},\underbrace{p,\ldots,p}_{\mbox{\scriptsize $q$ times}}).
                        \]
                        The set $\mathcal{M}(\mathfrak{q})$ encoding the weight vectors of $\mathbf{S}$ is of the form
                        \[
                        \mathcal{M}(\mathfrak{q})=\left\{a=
                        \begin{pmatrix}
                                0 & (a_{i,j}) \\ (a'_{i,j}) & 0
                        \end{pmatrix}
                        \right\}
                        \]
                        with $(a_{i,j})\in\mathcal{M}_{p,q}(\{0,1\})$ and $(a'_{i,j})=\mathbf{1}_{q,p}-{}^t (a_{i,j})$ where $\mathbf{1}_{q,p}$ stands for the $(q,p)$ sized matrix with all coefficients equal to $1$.
                        Thus, there is a~bijection
                        \[
                        \phi:\mathcal{M}_{p,q}(\{0,1\})\to\mathcal{M}(\mathfrak{q}),\ (a_{i,j})\mapsto
                        \begin{pmatrix}
                                0 & (a_{i,j}) \\ \mathbf{1}_{q,p}-{}^t (a_{i,j}) & 0
                        \end{pmatrix}
                        .
                        \]
                        In this way, the weights and weight vectors of $\spin$ can be associated to the matrices of the set $\mathcal{M}_{p,q}(\{0,1\})$. Note that, contrarily to the case of $\mathcal{M}(\mathfrak{q})$, there is no constraint on the coefficients of elements in $\mathcal{M}_{p,q}(\{0,1\})$, in particular all entries can be equal to $1$.
                        
                        The weight corresponding to $a=\phi((a_{i,j}))$ is
                        \begin{eqnarray*}
                                %\label{4.1}
                                \mu_{a} & = & (a_{1,*}-\frac{q}{2},\ldots,a_{p,*}-\frac{q}{2},a'_{1,*}-\frac{p}{2},\ldots,a'_{q,*}-\frac{p}{2}) \\
                                & = & (a_{1,*}-\frac{q}{2},\ldots,a_{p,*}-\frac{q}{2},p-a_{*,1}-\frac{p}{2},\ldots,p-a_{*,q}-\frac{p}{2}) \\
                                & = & (a_{1,*}-\frac{q}{2},\ldots,a_{p,*}-\frac{q}{2},\frac{p}{2}-a_{*,1},\ldots,\frac{p}{2}-a_{*,q}).
                        \end{eqnarray*}
                        Note that the lists $\alpha=(a_{1,*},\ldots,a_{p,*})$ and $\beta=(a_{*,q},\ldots,a_{*,1})$ are sequences of integers with at most $p$ (resp. $q$) parts.
                        We can also note that, by considering the action of the group $W(\fh,\ft)=\mathfrak{S}_p\times\mathfrak{S}_q$ on the weights, we can restrict our attention to weights of the form
                        \[
                        \mu=(\mu_1,\ldots,\mu_p,\nu_1,\ldots,\nu_q)\quad\mbox{with}\quad \mu_1\geq\ldots\geq\mu_p \ \mbox{and}\ \nu_1\geq\ldots\geq\nu_q
                        \]
                        i.e., weights for which $\alpha$ and $\beta$ above are partitions of the same number, with at most $p$ (resp. $q$) parts
                        (as before we identify a~weight $\mu=b_1\epsilon_1+\ldots+b_n\epsilon_n$ with the $n$-tuple $(b_1,\ldots,b_n)$).
                        
                        This suggests to consider the set
                        \[
                        \mathcal{P}=\bigsqcup_{m=0}^{pq}\{(\alpha,\beta):\mbox{$\alpha$ (resp. $\beta$) partition of $m$ with at most $p$ (resp. $q$) parts}\}
                        \]
                        and the map
                        \[
                        \mathcal{P}\ni(\alpha,\beta)\mapsto\mu(\alpha,\beta):=(\alpha_1-\frac{q}{2},\ldots,\alpha_p-\frac{q}{2},\frac{p}{2}-\beta_q,\ldots,\frac{p}{2}-\beta_1).
                        \]
                        Given $(\alpha,\beta)\in\mathcal{P}$, let
                        \[
                        \mathcal{M}_{p,q}(\alpha,\beta)=\left\{(a_{i,j})\in\mathcal{M}_{p,q}(\{0,1\}):
                        \Big\{\begin{array}{ll}
                                a_{i,*}=\alpha_i & \mbox{for all $i=1,\ldots,p$}\\ a_{*,j}=\beta_j & \mbox{for all $j=1,\ldots,q$}
                        \end{array}\right\}.
                        \]
                        Then we have the following lemma.
                        
                        \begin{lemma}\label{L5.1-new}
                                \begin{enumerate}[(a)]
                                        \item $\mu(\alpha,\beta)\in\Lambda(\mathbf{S})\Leftrightarrow\mathcal{M}_{p,q}(\alpha,\beta)\not=\emptyset$.
                                        \item If the conditions of (a) are satisfied, then $\mult_\spin(\mu(\alpha,\beta))=\#\mathcal{M}_{p,q}(\alpha,\beta)$.
                                \end{enumerate}
                        \end{lemma}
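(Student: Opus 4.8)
The plan is to transport everything into the matrix model of Section~\ref{S4.1} through the explicit bijection $\phi:\mathcal{M}_{p,q}(\{0,1\})\to\mathcal{M}(\mathfrak{q})$ introduced just above the statement. Recall two facts already available: by \eqref{spinweight} together with Lemma~\ref{L-matrices} one has $\Lambda(\mathbf{S})=\{\mu_a:a\in\mathcal{M}(\mathfrak{q})\}$, and (as used in the proof of Theorem~\ref{T-inductive-Levi}) $\mult_\spin(\mu)=\#\{a\in\mathcal{M}(\mathfrak{q}):\mu_a=\mu\}$ for every weight $\mu$. Hence both assertions will follow once I exhibit a bijection between the fibre $\{a\in\mathcal{M}(\mathfrak{q}):\mu_a=\mu(\alpha,\beta)\}$ and the set $\mathcal{M}_{p,q}(\alpha,\beta)$.

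First I would pin down, for $b=(b_{i,j})\in\mathcal{M}_{p,q}(\{0,1\})$, when $\mu_{\phi(b)}=\mu(\alpha,\beta)$. Applying Lemma~\ref{L-matrices} to the block matrix $\phi(b)$ with $\sigma=(q^p,p^q)$ gives $\mu_{\phi(b)}=(b_{1,*}-\tfrac q2,\ldots,b_{p,*}-\tfrac q2,\tfrac p2-b_{*,1},\ldots,\tfrac p2-b_{*,q})$ — the expression already displayed above — because the last $q$ row sums of $\phi(b)$ equal $p-b_{*,1},\ldots,p-b_{*,q}$. Comparing this coordinatewise with $\mu(\alpha,\beta)=(\alpha_1-\tfrac q2,\ldots,\alpha_p-\tfrac q2,\tfrac p2-\beta_q,\ldots,\tfrac p2-\beta_1)$, the equality $\mu_{\phi(b)}=\mu(\alpha,\beta)$ is equivalent to $b_{i,*}=\alpha_i$ for all $i\in\{1,\ldots,p\}$ and $b_{*,j}=\beta_{q+1-j}$ for all $j\in\{1,\ldots,q\}$.

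Next I would reconcile the condition $b_{*,j}=\beta_{q+1-j}$ with the defining condition $a_{*,j}=\beta_j$ of $\mathcal{M}_{p,q}(\alpha,\beta)$ by means of the column-reversal map $r(b)_{i,j}:=b_{i,\,q+1-j}$. This is an involution of $\mathcal{M}_{p,q}(\{0,1\})$ that fixes every row sum and exchanges the $j$-th column sum with the $(q+1-j)$-th, so it restricts to a bijection between $\{b\in\mathcal{M}_{p,q}(\{0,1\}):\mu_{\phi(b)}=\mu(\alpha,\beta)\}$ and $\mathcal{M}_{p,q}(\alpha,\beta)$. Composing with $\phi$ yields the desired bijection $\{a\in\mathcal{M}(\mathfrak{q}):\mu_a=\mu(\alpha,\beta)\}\cong\mathcal{M}_{p,q}(\alpha,\beta)$; assertion (a) is then the observation that one side is nonempty precisely when the other is, and assertion (b) follows by taking cardinalities and using $\mult_\spin(\mu(\alpha,\beta))=\#\{a\in\mathcal{M}(\mathfrak{q}):\mu_a=\mu(\alpha,\beta)\}$.

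I do not anticipate any real difficulty: the proof is bookkeeping around the bijection $\phi$ and Lemma~\ref{L-matrices}. The single point that requires care is the order in which the column sums of a matrix are read into $\beta$ — the ``reversed'' convention already implicit in the paragraph preceding the lemma — and this is exactly what the reversal map $r$ absorbs.
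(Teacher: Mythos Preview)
Your proposal is correct and is essentially the argument the paper has in mind: the paper gives no explicit proof of Lemma~\ref{L5.1-new}, treating it as immediate from the preceding setup (the bijection $\phi$ and the computation of $\mu_{\phi(b)}$ via Lemma~\ref{L-matrices}). Your write-up makes this explicit and, in particular, is careful about the one genuine subtlety the paper glosses over --- the reversed column-sum convention $\beta=(a_{*,q},\ldots,a_{*,1})$ versus the condition $a_{*,j}=\beta_j$ in the definition of $\mathcal{M}_{p,q}(\alpha,\beta)$ --- which your column-reversal involution $r$ handles cleanly.
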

                        
                        Therefore, determining the multiplicities of the weights of the spin module $\spin$ attached to the Levi subalgebra $\fh=\fh(p,q)$ with two blocks,
                        becomes equivalent to the following combinatorial problem:
                        
                        \begin{center}
                                \textit{Determine the number of matrices with coefficients in $\{0,1\}$, for a~fixed size and prescribed sums along rows and columns.}
                        \end{center}

                        In particular, this problem will be addressed in Theorem~\ref{T2-new} below.

                        Recall the dominance order $\preceq$ defined in Section~\ref{section-5.1}. By ${}^t \alpha$ we denote the dual partition of $\alpha=(\alpha_1,\ldots,\alpha_p)$, i.e., ${}^t \alpha=(\alpha'_1,\ldots,\alpha'_{\alpha_1})$ where
                        \[
                        \alpha'_j=\#\{i=1,\ldots,p:\alpha_i\geq j\}.
                        \]
                        
                        \begin{proposition}\label{P2-new}
                                Let $(\alpha,\beta)\in\mathcal{P}$.
                                \begin{enumerate}[(a)]
                                        \item If $\beta={}^t \alpha$, then $\mult_\spin(\mu(\alpha,\beta))=1$.
                                        \item If $\beta\prec{}^t \alpha$, then $\mult_\spin(\mu(\alpha,\beta))>1$.
                                        \item If $\beta\not\preceq{}^t \alpha$, then $\mult_\spin(\mu(\alpha,\beta))=0$.
                                \end{enumerate}
                        \end{proposition}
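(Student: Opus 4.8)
The plan is to reduce the statement to a Gale--Ryser type counting problem and then treat the three cases separately. By Lemma~\ref{L5.1-new}, we have $\mult_\spin(\mu(\alpha,\beta))=\#\mathcal{M}_{p,q}(\alpha,\beta)$, i.e. the number of $(p,q)$-sized matrices $a=(a_{i,j})$ with entries in $\{0,1\}$ whose row sums are $\alpha_1,\ldots,\alpha_p$ and whose column sums are $\beta_1,\ldots,\beta_q$ (recall that $(\alpha,\beta)\in\mathcal{P}$ means $\alpha$ and $\beta$ are partitions of the same integer $m$, with at most $p$, resp. $q$, parts). So it suffices to show: (c$'$) $\mathcal{M}_{p,q}(\alpha,\beta)=\emptyset$ unless $\beta\preceq{}^t\alpha$; (a$'$) $\#\mathcal{M}_{p,q}(\alpha,{}^t\alpha)=1$; and (b$'$) $\#\mathcal{M}_{p,q}(\alpha,\beta)\geq 2$ whenever $\beta\prec{}^t\alpha$. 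Since $|\alpha|=|\beta|$, these three hypotheses on $\beta$ are exhaustive and mutually exclusive, so this covers all of (a), (b), (c).

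For (c$'$), I would use the elementary necessity direction of the Gale--Ryser theorem. If $a\in\mathcal{M}_{p,q}(\alpha,\beta)$, then counting the $1$'s contained in the first $k$ columns of $a$ row by row gives $\beta_1+\cdots+\beta_k=\sum_{i=1}^p\#\{j\leq k:a_{i,j}=1\}\leq\sum_{i=1}^p\min(\alpha_i,k)$, and the right-hand side equals $\sum_{m=1}^k\#\{i:\alpha_i\geq m\}=({}^t\alpha)_1+\cdots+({}^t\alpha)_k$. Hence $\beta\preceq{}^t\alpha$, and (c) is the contrapositive. For (a$'$), I would first check that the ``staircase'' matrix $Y$ defined by $Y_{i,j}=1$ iff $j\leq\alpha_i$ belongs to $\mathcal{M}_{p,q}(\alpha,{}^t\alpha)$ (its $j$-th column sum is $\#\{i:\alpha_i\geq j\}=({}^t\alpha)_j$), and then prove uniqueness by induction on $q$ by peeling off the first column: if $a\in\mathcal{M}_{p,q}(\alpha,{}^t\alpha)$ and $\ell=\ell(\alpha)$, then the first column carries $({}^t\alpha)_1=\ell$ entries equal to $1$, which must all lie in rows $1,\ldots,\ell$ since the remaining rows are zero; hence $a_{i,1}=1$ exactly for $i\leq\ell$, and deleting column $1$ leaves an element of $\mathcal{M}_{p,q-1}$ with row-sum partition $(\alpha_1-1,\ldots,\alpha_\ell-1)$ and column sums equal to its conjugate, which by induction is the corresponding staircase matrix. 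Thus $a=Y$.

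The main work is (b$'$), and it has two ingredients. The first is nonemptiness: $\beta\preceq{}^t\alpha$ forces $\mathcal{M}_{p,q}(\alpha,\beta)\neq\emptyset$; this is the sufficiency direction of the Gale--Ryser theorem, which I would either invoke as known or reprove by greedily pushing the $1$'s of the staircase matrix $Y$ downwards along columns until the column sums become $\beta$. The second is an interchange lemma: \emph{a $\{0,1\}$-matrix with weakly decreasing row sums and weakly decreasing column sums that contains no $2\times 2$ submatrix equal to $\left(\begin{smallmatrix}1&0\\0&1\end{smallmatrix}\right)$ or $\left(\begin{smallmatrix}0&1\\1&0\end{smallmatrix}\right)$ is the staircase matrix of its row-sum partition}, and in particular has column sums equal to the conjugate of its row sums. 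To prove this, assume the $1$'s of some row are not left-justified, and choose $j<j'$ with $a_{i,j}=0$ and $a_{i,j'}=1$; since the column sums satisfy $\beta_j\geq\beta_{j'}$, after excluding row $i$ the rows carrying a $1$ in column $j$ are $\beta_j$ in number while those carrying a $1$ in column $j'$ are $\beta_{j'}-1$ in number, so $\beta_j>\beta_{j'}-1$ exhibits a row $i'\neq i$ with $a_{i',j}=1$ and $a_{i',j'}=0$; then rows $i,i'$ and columns $j,j'$ realize one of the two forbidden patterns, a contradiction.

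Combining these: when $\beta\prec{}^t\alpha$, the set $\mathcal{M}_{p,q}(\alpha,\beta)$ is nonempty, and any $a$ in it has column sums $\beta\neq{}^t\alpha$, so by the interchange lemma $a$ is not a staircase matrix and therefore contains a forbidden $2\times 2$ submatrix; swapping the four entries of that submatrix produces a matrix distinct from $a$ with the same row and column sums, so $\#\mathcal{M}_{p,q}(\alpha,\beta)\geq 2$, giving (b). The delicate points I expect to be: obtaining the nonemptiness input cleanly (either a clean citation of Gale--Ryser or a careful greedy construction), and checking the interchange lemma precisely for \emph{weakly} decreasing margins, since that is exactly where the inequality $\beta_j\geq\beta_{j'}$ is used.
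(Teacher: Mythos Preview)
Your argument is correct but follows a genuinely different route from the paper.  The paper deduces Proposition~\ref{P2-new} from two later results: Theorem~\ref{T2-new} establishes a bijection $\mathcal{M}_{p,q}(\alpha,\beta)\cong RT(\alpha,\beta)$ with the set of row-tableaux of shape $\alpha$ and weight $\beta$, and Proposition~\ref{L5-new} proves the trichotomy directly for $RT(\alpha,\beta)$.  In particular, for part (b) the paper argues inductively along a chain ${}^t\alpha=\beta^{(0)}\succ\beta^{(1)}\succ\cdots\succ\beta^{(k)}=\beta$ of elementary box moves, showing that each tableau in $RT(\alpha,\beta^{(i)})$ spawns two distinct tableaux in $RT(\alpha,\beta^{(i+1)})$; this simultaneously handles nonemptiness and the lower bound $\geq 2$, so the paper never needs to invoke Gale--Ryser as a black box.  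Your approach instead stays on the matrix side, recognises the question as precisely the Gale--Ryser problem, and uses the classical interchange mechanism: the necessity direction gives (c), the staircase matrix gives (a), and for (b) you combine Gale--Ryser sufficiency with the observation that any non-staircase matrix with monotone margins must contain a $2\times 2$ interchange.  Your route is arguably more direct and connects the result to a well-known theorem, while the paper's route is entirely self-contained and dovetails with its tableau theme.  One small slip: ``pushing the $1$'s of $Y$ downwards along columns'' would alter row sums rather than column sums---the relevant move is a $2\times 2$ interchange shifting a $1$ from a column with surplus to a column with deficit---but since you explicitly flag the Gale--Ryser sufficiency as a point to either cite or prove carefully, this does not constitute a gap.
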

                        
                        \begin{proof}
                                This follows from Proposition~\ref{L5-new} and Theorem~\ref{T2-new} below.
                        \end{proof}
                        
                        \begin{corollary}\label{C5.3}
                                Let $\mathcal{P}'=\{(\alpha,\beta)\in\mathcal{P}:\beta\preceq{}^t \alpha\}$.
                                Let $\Lambda'(\mathbf{S})\subset\Lambda(\mathbf{S})$ be the subset of weights $\mu=(\mu_1,\ldots,\mu_p,\nu_1,\ldots,\nu_q)$ satisfying the condition
                                \[
                                \mu_1\geq\ldots\geq\mu_p,\quad\nu_1\geq\ldots\geq\nu_q
                                \]
                                Then the map
                                \[
                                \mathcal{P}'\to\Lambda'(\mathbf{S}),\ (\alpha,\beta)\mapsto\mu(\alpha,\beta)
                                \]
                                is bijective.
                        \end{corollary}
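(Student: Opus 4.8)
The plan is to check injectivity and surjectivity of the map $(\alpha,\beta)\mapsto\mu(\alpha,\beta)$ separately, using Lemma~\ref{L5.1-new} and Proposition~\ref{P2-new} as the only external inputs; everything else is bookkeeping on the row and column sums of $(0,1)$-matrices. First I would record that the map is injective already on all of $\mathcal{P}$: writing $\mu(\alpha,\beta)=(b_1,\ldots,b_n)$ and comparing with the defining formula, one recovers $\alpha_i=b_i+\frac q2$ for $i=1,\ldots,p$ and $\beta_j=\frac p2-b_{n+1-j}$ for $j=1,\ldots,q$, so $(\alpha,\beta)$ is determined by its image. In particular the restriction to $\mathcal{P}'$ is injective.

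Next I would show that, for $(\alpha,\beta)\in\mathcal{P}$, one has $\mu(\alpha,\beta)\in\Lambda'(\mathbf{S})$ if and only if $(\alpha,\beta)\in\mathcal{P}'$. Since $\alpha$ and $\beta$ are nonincreasing, the tuple $\mu(\alpha,\beta)$ automatically satisfies $\mu_1\geq\cdots\geq\mu_p$ and $\nu_1\geq\cdots\geq\nu_q$, so membership in $\Lambda'(\mathbf{S})$ reduces to membership in $\Lambda(\mathbf{S})$. By Lemma~\ref{L5.1-new}(a) the latter is equivalent to $\mathcal{M}_{p,q}(\alpha,\beta)\neq\emptyset$, and by Proposition~\ref{P2-new} (whose cases (a), (b), (c) are exhaustive and mutually exclusive, with positive multiplicity exactly in cases (a) and (b), hence $\mu(\alpha,\beta)\in\Lambda(\mathbf{S})$ exactly in those cases) this holds precisely when $\beta\preceq{}^t\alpha$, i.e. when $(\alpha,\beta)\in\mathcal{P}'$. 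In particular the map restricted to $\mathcal{P}'$ does land in $\Lambda'(\mathbf{S})$.

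For surjectivity, let $\mu=(\mu_1,\ldots,\mu_p,\nu_1,\ldots,\nu_q)\in\Lambda'(\mathbf{S})$. Since $\mu\in\Lambda(\mathbf{S})$, there is $a\in\mathcal{M}(\mathfrak{q})$ with $\mu_a=\mu$; via the bijection $\phi$ of Section~\ref{section6}, $a=\phi((a_{i,j}))$ for some $(a_{i,j})\in\mathcal{M}_{p,q}(\{0,1\})$, and then $\mu_i=a_{i,*}-\frac q2$ and $\nu_j=\frac p2-a_{*,j}$. I would set $\alpha=(a_{1,*},\ldots,a_{p,*})$ and $\beta=(a_{*,q},\ldots,a_{*,1})$; the ordering conditions defining $\Lambda'(\mathbf{S})$ force $\alpha$ and $\beta$ to be partitions of the common integer $m=\sum_{i,j}a_{i,j}\leq pq$, hence $(\alpha,\beta)\in\mathcal{P}$ and, by construction, $\mu(\alpha,\beta)=\mu$. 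Finally, the matrix obtained from $(a_{i,j})$ by reversing the order of its columns has row sums $\alpha$ and column sums $\beta$, so it lies in $\mathcal{M}_{p,q}(\alpha,\beta)$; thus $\mathcal{M}_{p,q}(\alpha,\beta)\neq\emptyset$, and the previous paragraph yields $(\alpha,\beta)\in\mathcal{P}'$. This proves surjectivity onto $\Lambda'(\mathbf{S})$ and completes the argument.

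The genuinely nontrivial ingredient is the Gale--Ryser-type equivalence $\mathcal{M}_{p,q}(\alpha,\beta)\neq\emptyset\Leftrightarrow\beta\preceq{}^t\alpha$, which is furnished by Proposition~\ref{P2-new}; once that is granted, the only step requiring care is the index reversal relating the list $(a_{*,1},\ldots,a_{*,q})$ of column sums to the partition $\beta$ and to the last $q$ coordinates of $\mu(\alpha,\beta)$, which I have tracked explicitly in the inverse formulas above.
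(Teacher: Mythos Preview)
Your argument is correct and is precisely the natural unpacking of why the statement is a corollary of Proposition~\ref{P2-new} and Lemma~\ref{L5.1-new}; the paper itself gives no separate proof, so your explicit check of injectivity (via the inverse formulas $\alpha_i=b_i+\tfrac q2$, $\beta_j=\tfrac p2-b_{n+1-j}$) and surjectivity (by reading off row/column sums of a witnessing $(0,1)$-matrix, with the column reversal) is exactly what the ``corollary'' status presupposes. The only remark is that your appeal to Proposition~\ref{P2-new} to get $\mu(\alpha,\beta)\in\Lambda(\mathbf S)\Leftrightarrow\beta\preceq{}^t\alpha$ could be shortened: once you have exhibited an element of $\mathcal{M}_{p,q}(\alpha,\beta)$ in the surjectivity step, Lemma~\ref{L5.1-new}(a) already gives $\mu(\alpha,\beta)\in\Lambda(\mathbf S)$, and then Proposition~\ref{P2-new}(c) alone forces $\beta\preceq{}^t\alpha$.
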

                        
                        \begin{remark}\label{lambdaprime}
                                Every weight in $\Lambda(\spin)$ is obtained from a~weight in $\Lambda'(\spin)$ by the action of an element in $W(\fh,\ft)=\mathfrak{S}_p\times\mathfrak{S}_q$. Thus, the corollary determines also the elements of $\Lambda(\spin)$.
                        \end{remark}
                        
                        \begin{example}
                                Let $p=q=2$. There are 9 partitions of numbers $\leq 4$ with at most $2$ parts:
                                
                                \[ (4), \, (3,1), \,(2,2), \,(3), \,(2,1), \,(2), \,(1,1), \,(1), \, \emptyset. \]
                                
                                We have
                                \begin{gather*}
                                        {}^t (4)=(1^4), \, {}^t (3,1)=(2,1,1), \,{}^t (2,2)=(2,2), \,{}^t (3)=(1,1,1),  \\ 
                                        {}^t (2,1)=(2,1), \,{}^t (2)=(1,1), \,{}^t (1,1)=(2), \,{}^t (1)=(1), \,{}^t \emptyset=\emptyset.    
                                \end{gather*}
                                
                                We have in this case
                                \begin{align*}
                                        \mathcal{P}'= &\{((2,2),(2,2)),\ ((2,1),(2,1)),\ ((2),(1,1)),\\
                                        &\quad((1,1),(1,1)),\ ((1,1),(2)),\ ((1),(1)),\ (\emptyset,\emptyset)\}.    
                                \end{align*}
                                The set of weights $\mu(\alpha,\beta)$ associated to the elements $(\alpha,\beta)\in\mathcal{P}'$ is correspondingly
                                \begin{gather*}\Lambda'(\spin)=\{(1,1,-1,-1),\ (1,0,0,-1),\ (1,-1,0,0),\ (0,0,0,0)\\
                                        \qquad \qquad \qquad (0,0,1,-1),\ (0,-1,1,0),\ (-1,-1,1,1)\}\end{gather*}
                                
                                The elements of $\Lambda(\spin)$ are obtained from those in $\Lambda'(\spin)$ by permuting the sequences through the action of $W(\fh,\ft)=\mathfrak{S}_2\times\mathfrak{S}_2$, thus
                                
                                \begin{gather*}
                                        \Lambda(\spin)=\Lambda'(\spin)\cup\{(0,1,0,-1),\, (0,1,-1,0),\, (1,0,-1,0),\,(-1,1,0,0),\, \\
                                        \qquad  \qquad \qquad \qquad \qquad  (0,0,-1,1),\, (-1,0,1,0),\, (-1,0,0,1),\, (0,-1,0,1)\}.
                                \end{gather*}
                                
                                Finally, for each pair $(\alpha,\beta)\neq((1,1),(1,1))$ in the set $\mathcal{P}'$, we have $\beta={}^t \alpha$ hence every nonzero weight of $\spin$ has multiplicity one. Moreover, $\mathrm{mult}_\spin(0)=\#\mathcal{M}_{2,2}((1,1),(1,1))=2$.
                                
                                %(b) Let $p=3$, $q=2$. There are 7 partitions of $pq=6$ with at most 3 parts: $(6)$, $(5,1)$, $(4,2)$, $(3,3)$, $(4,1,1)$, $(3,2,1)$, $(2,2,2)$, and there are 4 partitions of $6$ with at most $2$ parts (the first four partitions of the previous list).
                                %We have
                                %${}^t (6)=(1^6)$, ${}^t (5,1)=(2,1^4)$, ${}^t (4,2)=(2,2,1,1)$, ${}^t (3,3)=(2,2,2)$, ${}^t (4,1,1)=(3,1^3)$, ${}^t (3,2,1)=(3,2,1)$, ${}^t (2,2,2)=(3,3)$.
                                %In this case the set $\mathcal{P}'$ reduces to the singleton $\{(\alpha,\beta)=((2,2,2),(3,3))\}$
                                %and the weight associated to the sole element of $\mathcal{P}'$ is $\mu(\alpha,\beta)=(1,1,1,-\frac{3}{2},-\frac{3}{2})$
                                %with multiplicity one (since $\beta={}^t \alpha$). Since this weight is invariant by the action of $W_L=\mathfrak{S}_3\times\mathfrak{S}_2$, we get that $\mu(\alpha,\beta)$ is the only weight of $\spin$ in this case.
                                %Thus $\Lambda(\spin)=\Lambda'(\spin)=\{(1,1,1,-\frac{3}{2},-\frac{3}{2})\}$.
                        \end{example}
                        
                        To show Proposition~\ref{P2-new}, we rely on the following combinatorial notion.
                        
                        \begin{definition}\label{RTtableau}
                                If $(\alpha,\beta)$ is a~pair of partitions of the same size, we call row-tableau of shape $\alpha$ and weight $\beta$
                                a numbering of the boxes of $\alpha$ (viewed as a~Young diagram) such that the numbering comprises of $\beta_i$ occurrences of $i$ for all integer $i$, and such that the entries increase from left to right along the rows. Let $RT(\alpha,\beta)$ denote the set of row-tableaux of shape $\alpha$ and weight $\beta$.
                        \end{definition}
                        
                        \begin{example}
                                \begin{enumerate}[(a)]
                                        \item If $\alpha=(2,2,2)$ and $\beta=(3,3)$ then the only row-tableau of shape $\alpha$ and weight $\beta$ is
                                        \[
                                        \young(12,12,12)
                                        \]
                                        \item  The tableau
                                        \[
                                        \young(124,356,123,14,2)
                                        \]
                                        is a~row-tableau of shape $\alpha=(3,3,3,2,1)$ and weight $\beta=(3,3,2,2,1,1)$.
                                \end{enumerate}
                        \end{example}
                        
                        \begin{proposition}\label{L5-new}
                                Let $(\alpha,\beta)\in\mathcal{P}$. Then:
                                \begin{enumerate}[(a)]
                                        \item $RT(\alpha,\beta)\not=\emptyset\Rightarrow\beta\preceq{}^t \alpha$;
                                        \item $\beta={}^t \alpha\Rightarrow\#RT(\alpha,\beta)=1$;
                                        \item $\beta\prec{}^t \alpha\Rightarrow\#RT(\alpha,\beta)\geq 2$.
                                \end{enumerate}
                                %
                                % \begin{itemize}
                                        % %\item[\rm (a)] $RT(\alpha,\beta)$ is nonempty if and only if $\beta\preceq{}^t \alpha$.
                                        % %\item[\rm (b)] $\#RT(\alpha,\beta)=1$ if and only if $\beta={}^t \alpha$.
                                        % %
                                        % \end{itemize}
                        \end{proposition}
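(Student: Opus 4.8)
The plan is to reduce everything to one structural feature of a row-tableau $\tau\in RT(\alpha,\beta)$: since the entries increase along each row and are in particular distinct within each row, the entries of value $\le k$ in a given row form an initial segment of that row, so a row of length $\alpha_r$ carries at most $\min(\alpha_r,k)$ of them. Summing over rows and using the standard identity
\[
\sum_r\min(\alpha_r,k)=\#\{\text{cells of }\alpha\text{ in its first }k\text{ columns}\}=({}^t\alpha)_1+\cdots+({}^t\alpha)_k ,
\]
any $\tau\in RT(\alpha,\beta)$ gives $\beta_1+\cdots+\beta_k=\#\{\text{entries of value}\le k\text{ in }\tau\}\le({}^t\alpha)_1+\cdots+({}^t\alpha)_k$ for all $k\ge1$; since $|\beta|=|\alpha|=|{}^t\alpha|$ this is precisely $\beta\preceq{}^t\alpha$, which proves~(a).

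For~(b), if $\beta={}^t\alpha$ then the above inequalities are equalities for every $k$, which forces each row $r$ to contain exactly $\min(\alpha_r,k)$ entries of value $\le k$ for all $k$; taking successive differences in $k$, row $r$ contains exactly one entry equal to each of $1,\dots,\alpha_r$ and nothing else, so increasingness pins $\tau$ down: its $r$-th row is $1,2,\dots,\alpha_r$. This ``staircase'' tableau is indeed a row-tableau of weight $({}^t\alpha)$, so $\#RT(\alpha,{}^t\alpha)=1$.

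For~(c), assume $\beta\prec{}^t\alpha$. I would first record the reformulation that a row-tableau of shape $\alpha$ and weight $\beta$ is the same datum as a matrix with entries in $\{0,1\}$ having $i$-th row sum $\alpha_i$ and $j$-th column sum $\beta_j$ (fill row $i$ with the columns carrying a $1$, in increasing order) --- this is exactly the bijection $\mathcal{M}_{p,q}(\alpha,\beta)\cong RT(\alpha,\beta)$ underlying Theorem~\ref{T2-new}. Nonemptiness of $RT(\alpha,\beta)$ then follows from the Gale--Ryser theorem, which produces such a matrix precisely when $\beta\preceq{}^t\alpha$ (alternatively one builds it greedily, filling the columns in order and placing the $1$'s of each column in the rows of largest residual capacity). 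To see that $\#RT(\alpha,\beta)\ge2$, I would show every $\tau\in RT(\alpha,\beta)$ admits a \emph{swap}: if two rows $r\ne r'$ each contain, as sets of entries, a value absent from the other --- say $v$ in row $r$ only and $w$ in row $r'$ only --- then exchanging $v$ and $w$ between the two rows (and re-sorting) yields $\tau'\in RT(\alpha,\beta)$ with $\tau'\ne\tau$ (the weight is unchanged since the row-counts of $v$ and of $w$ are each unchanged). It remains to check that if \emph{no} pair of rows of $\tau$ is swappable then $\beta={}^t\alpha$: ``no swap'' means the entry-sets $S_1,\dots,S_\ell$ of the rows are totally ordered by inclusion, and since $|S_r|=\alpha_r$ is nonincreasing in $r$ this chain is descending, $S_1\supseteq\cdots\supseteq S_\ell$; hence each value $i$ lies exactly in rows $1,\dots,\beta_i$ and $S_r=\{i:\beta_i\ge r\}$, which --- because $\beta$ is nonincreasing --- equals the initial segment $\{1,\dots,({}^t\beta)_r\}$, and comparing cardinalities gives $\alpha_r=({}^t\beta)_r$ for all $r$, i.e. $\beta={}^t\alpha$. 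Since $\beta\prec{}^t\alpha$ excludes this, $\tau$ is swappable and $\#RT(\alpha,\beta)\ge2$.

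The hard part will be the ``no swap implies $\beta={}^t\alpha$'' step of~(c): the point to get right is that it is genuinely the hypothesis $(\alpha,\beta)\in\mathcal{P}$ (that $\beta$ is a \emph{nonincreasing} sequence, not merely a weight) which turns the nested entry-sets into initial segments and so forces $\beta={}^t\alpha$; the other slightly delicate point is nonemptiness in~(c), handled most cleanly by quoting Gale--Ryser through the matrix reformulation of $RT(\alpha,\beta)$ --- the same reformulation needed elsewhere for Theorem~\ref{T2-new}. Everything else --- parts~(a) and~(b), and the verification that a swap is weight-preserving and produces a genuinely different tableau --- is routine.
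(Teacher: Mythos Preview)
Parts (a) and (b) are essentially the paper's argument, phrased in the same way. For (c) you take a genuinely different route. The paper writes $\beta$ as the endpoint of a chain of elementary dominance-lowering moves starting at ${}^t\alpha$, and shows that each move $\gamma\to\gamma'$ (decrease $\gamma_j$ by $1$, increase $\gamma_k$ by $1$, with $j<k$) sends any single $T\in RT(\alpha,\gamma)$ to at least two distinct elements of $RT(\alpha,\gamma')$, because $\gamma_j-\gamma_k\geq 2$ forces at least two rows of $T$ to contain $j$ and not $k$; this handles nonemptiness and the bound $\geq 2$ simultaneously, by induction along the chain, with no external input. Your approach instead establishes nonemptiness once via Gale--Ryser (through the matrix model $\mathcal{M}_{p,q}(\alpha,\beta)$) and then runs an intrinsic swap argument inside $RT(\alpha,\beta)$: if no swap exists the row entry-sets $S_r$ form a chain under inclusion, which together with the partition hypotheses forces $\alpha={}^t\beta$ and hence $\beta={}^t\alpha$. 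Your swap step is conceptually cleaner and in fact proves something slightly stronger (every tableau has a neighbor in $RT(\alpha,\beta)$, not just that two tableaux exist), but it imports a classical theorem the paper deliberately avoids; the paper's chain argument is fully self-contained. Both proofs are correct.
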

                        
                        \begin{proof}
                                (a) If $T$ is an element of $RT(\alpha,\beta)$, for each number $j\in\{1,\ldots,q\}$ each occurrence of it is located within the first $j$ columns of $T$. This implies that
                                \[
                                \#\{\mbox{entries $\leq j$ in $T$}\}\leq\#\{\mbox{boxes in the first $j$ columns of $T$}\}
                                \]
                                which exactly means that
                                \[
                                \beta_1+\ldots+\beta_j\leq \alpha'_1+\ldots+\alpha'_j\quad\mbox{for all $j=1,\ldots,q$}
                                \]
                                (where ${}^t \alpha=(\alpha'_1,\ldots,\alpha'_{q})$),
                                whence $\beta\preceq{}^t \alpha$.
                                
                                (b) If $\beta={}^t \alpha$, then the number of $j$'s in $T$ coincides with the length of the $j$-th column, hence $T$ must be the unique tableau such that the entries of the $j$-th column are all equal to $j$, for all $j$.
                                
                                (c) We first note that if $\beta\prec{}^t \alpha$, then $\beta$ is obtained from ${}^t \alpha$ by a~sequence of operations of the form $(\beta_1,\ldots,\beta_q)\mapsto (\beta'_1,\ldots,\beta'_{q'})$ (with $q'=q$ or $q+1$)
                                where there are $1\leq j<k\leq q'$ such that
                                \[ \beta'_j=\beta_j-1, \quad \beta'_k=\beta_k+1, \quad \text{and } \beta'_\ell=\beta_\ell \text{ if } \ell\notin\{j,k\}. \] 
                                Then, for showing (c), it is sufficient to show that if $\beta'$ is obtained from $\beta$ by such an operation, then every element $T\in RT(\alpha,\beta)$ induces two elements of $RT(\alpha,\beta')$. Now, since $\beta'$ is still nonincreasing, we must have $\beta_j-\beta_k\geq 2$.
                                Hence there are at least two rows of $T$ which contain an entry $j$ and no entry $k$. Choosing any of these two rows, by replacing the entry $j$ of that row by $k$ and rearranging the row in increasing order, we get a~new tableau which belongs to $RT(\alpha,\beta')$.
                                The proof of the proposition is complete.
                        \end{proof}
                        
                        \begin{example}
                                To illustrate the last step of the proof above, take for instance the partitions $\alpha=(3,3,2,1,1)$, $\beta=(3,2,2,2,1)$, $\beta'=(2,2,2,2,2)$:
                                \[
                                T=\young(124,135,23,4,1)\in RT(\alpha,\beta)\quad\mbox{yields}\quad
                                \young(245,135,23,4,1),\quad \young(124,135,23,4,5)\in RT(\alpha,\beta').
                                \]
                        \end{example}
                        
                        \begin{theorem}\label{T2-new}
                                Let $(\alpha,\beta)\in\mathcal{P}$. Then, $\mult_\spin(\mu(\alpha,\beta))=\#\mathcal{M}_{p,q}(\alpha,\beta)=\#RT(\alpha,\beta)$.
                        \end{theorem}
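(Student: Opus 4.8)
The plan is to establish the two equalities separately. For the first equality, $\mult_\spin(\mu(\alpha,\beta))=\#\mathcal{M}_{p,q}(\alpha,\beta)$, I would simply invoke Lemma~\ref{L5.1-new}: when $\mu(\alpha,\beta)\in\Lambda(\mathbf{S})$, part~(b) gives the equality directly, and when $\mu(\alpha,\beta)\notin\Lambda(\mathbf{S})$, part~(a) shows that $\mathcal{M}_{p,q}(\alpha,\beta)=\emptyset$ while $\mult_\spin(\mu(\alpha,\beta))=0$, so the equality holds trivially. Thus the whole content of the theorem reduces to constructing a bijection $\mathcal{M}_{p,q}(\alpha,\beta)\cong RT(\alpha,\beta)$.

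For that bijection, recall that $(\alpha,\beta)\in\mathcal{P}$ means in particular that $\alpha=(\alpha_1\geq\cdots\geq\alpha_p)$ is a partition, so it is legitimate to regard $\alpha$ as a Young diagram whose $i$-th row has $\alpha_i$ boxes. Given $a=(a_{i,j})\in\mathcal{M}_{p,q}(\alpha,\beta)$, the set $S_i:=\{j\in\{1,\ldots,q\}:a_{i,j}=1\}$ has exactly $\alpha_i$ elements, since the $i$-th row sum of $a$ is $\alpha_i$. I would define $\Psi(a)$ to be the filling of the diagram $\alpha$ whose $i$-th row is the increasing listing of $S_i$. By construction the rows of $\Psi(a)$ are strictly increasing, and for each $j$ the number of boxes of $\Psi(a)$ carrying the entry $j$ equals $\#\{i:a_{i,j}=1\}=a_{*,j}=\beta_j$; hence $\Psi(a)\in RT(\alpha,\beta)$.

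Conversely, given $T\in RT(\alpha,\beta)$, since the entries along each row of $T$ strictly increase, the entry $j$ occurs at most once in each row; so setting $a_{i,j}=1$ if $j$ appears in the $i$-th row of $T$ and $a_{i,j}=0$ otherwise yields a matrix with entries in $\{0,1\}$, whose $i$-th row sum is the length $\alpha_i$ of the $i$-th row of $T$, and whose $j$-th column sum is the number of rows of $T$ containing $j$, which by the at-most-once property equals the total number of occurrences of $j$ in $T$, namely $\beta_j$. Hence this matrix lies in $\mathcal{M}_{p,q}(\alpha,\beta)$, and the assignment is manifestly inverse to $\Psi$. This produces the desired bijection, whence $\#\mathcal{M}_{p,q}(\alpha,\beta)=\#RT(\alpha,\beta)$, completing the proof.

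I would not expect a serious obstacle here: once the reformulation via Lemma~\ref{L5.1-new} is in place, the correspondence ``row of the matrix $\longleftrightarrow$ the support of that row, written in increasing order'' is an essentially tautological dictionary between $\{0,1\}$-matrices with prescribed margins and row-tableaux with prescribed shape and weight. The only points requiring a line of care are that $\alpha$ must be a genuine partition for the target set to make sense (guaranteed by $(\alpha,\beta)\in\mathcal{P}$) and that the strict increase along rows is exactly what makes the column sums of the matrix match the multiplicities of the entries in the tableau.
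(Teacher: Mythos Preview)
Your proposal is correct and follows essentially the same approach as the paper: invoke Lemma~\ref{L5.1-new} for the first equality, then set up the bijection $\mathcal{M}_{p,q}(\alpha,\beta)\cong RT(\alpha,\beta)$ by sending a matrix $a$ to the tableau whose $i$-th row lists in increasing order the column indices $j$ with $a_{i,j}=1$, with inverse given by $(a_T)_{i,j}=1$ iff $j$ appears in row $i$ of $T$. Your treatment of the degenerate case $\mu(\alpha,\beta)\notin\Lambda(\mathbf{S})$ is slightly more explicit than the paper's, but otherwise the arguments are the same.
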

                        
                        \begin{proof}
                                The first equality is shown in Lemma~\ref{L5.1-new}.
                                To prove the second equality, we define a~map $\mathcal{M}_{p,q}(\alpha,\beta)\to RT(\alpha,\beta)$ in the following way. 
                                
                                Given a~matrix $a=(a_{i,j})\in\mathcal{M}_{p,q}(\alpha,\beta)$, we let $T(a)$ be the tableau obtained in the following way:
                                
                                For $i\in\{1,\ldots,p\}$, there are $\alpha_i$ coefficients of the $i$-th row of $a$ which are equal to $1$. Let $c_{i,1}<\ldots<c_{i,\alpha_i}$ be the column numbers of these coefficients. Then, we put the numbers $c_{i,1},\ldots,c_{i,\alpha_i}$ in the $i$-th row of $T(a)$. We illustrate this procedure in \ref{lastex}.
                                
                                % {\footnotesize
                                        
                                        %    For instance, for $\alpha=(3,3,2,1,1)$ and $\beta=(2,2,2,2,1,1)$:
                                        % \[
                                        % a=
                                        % \begin{pmatrix}
                                                % 0 & 1 & 1 & 1 & 0 & 0\ \\ 1 & 1 & 0 & 0 & 0 & 1 \\ 0 & 0 & 1 & 0 & 1 & 0 \\ 0 & 0 & 0 & 1 & 0 & 0 \\ 1 & 0 & 0 & 0 & 0 & 0
                                                % \end{pmatrix}
                                        % \in\mathcal{M}_{5,6}(\alpha,\beta)
                                        % \quad\Rightarrow\quad T(a)=\young(234,126,35,4,1)\in \mathrm{RT}(\alpha,\beta).
                                        % \]
                                        
                                        % }
                                
                                In this way $T(a)$ is certainly a~tableau of shape $\alpha$, where the entries appear in increasing order in each row. Moreover, for every $j\in\{1,\ldots,q\}$, there are $\beta_j$ instances of the number $1$ in the $j$-th column of $a$, which implies that there are exactly $\beta_j$ row numbers $i$ for which $j\in\{c_{i,1},\ldots,c_{i,\alpha_i}\}$. Hence $T(a)$ contains exactly $\beta_j$ entries equal to $j$. This shows that $T(a)\in RT(\alpha,\beta)$. Hence the map $a\mapsto T(a)$ is well defined.
                                
                                Conversely, we define a~map $RT(\alpha,\beta)\to\mathcal{M}_{p,q}(\alpha,\beta)$. Given $T\in RT(\alpha,\beta)$, let $a_T$ be the matrix of shape $(p,q)$ defined in the following way:
                                For $(i,j)\in\{1,\ldots,p\}\times\{1,\ldots,q\}$, let
                                \[
                                (a_T)_{i,j}=\left\{
                                \begin{array}
                                        {ll}1 & \mbox{if the entry $j$ appears in the $i$-th row of $T$,} \\ 0 & \mbox{otherwise.}
                                \end{array}
                                \right.
                                \]
                                In this way, the number of $1$'s in the $i$-th row of $a_T$ coincides with the number of entries in the $i$-th row of $T$, which is equal to $\alpha_i$,
                                and the number of $1$'s in the $j$-th column of $a_T$ coincides with the number of $j$'s in $T$, which is equal to $\beta_j$. Hence, we have $a_T\in\mathcal{M}_{p,q}(\alpha,\beta)$.
                                
                                It is now straightforward to check that the maps $a\mapsto T(a)$ and $T\mapsto a_T$ are mutually inverse, which yields the desired bijection
                                $\mathcal{M}_{p,q}(\alpha,\beta)\cong RT(\alpha,\beta)$.
                        \end{proof}
                        
                        \begin{example}\label{lastex}
                                We illustrate here the map $T$ from the above proof. For instance, for $\alpha=(3,3,2,1,1)$ and $\beta=(2,2,2,2,1,1)$, we have:
                                \[
                                a=
                                \begin{pmatrix}
                                        0 & 1 & 1 & 1 & 0 & 0\ \\ 1 & 1 & 0 & 0 & 0 & 1 \\ 0 & 0 & 1 & 0 & 1 & 0 \\ 0 & 0 & 0 & 1 & 0 & 0 \\ 1 & 0 & 0 & 0 & 0 & 0
                                \end{pmatrix}
                                \in\mathcal{M}_{5,6}(\alpha,\beta)
                                \quad\Rightarrow\quad T(a)=\young(234,126,35,4,1)\in \mathrm{RT}(\alpha,\beta).
                                \]
                                
                        \end{example}
                        
                        \section*{Index of notation}
                        
                        \noindent
                        \S\ref{section-1.1}: $\ft_\mathbb{R}$, $\ft_\mathbb{R}^*$, $\mult_V(\mu)$, $W(\fh,\ft)$. \\
                        \S\ref{section-1.2}: $\fg$, $\ft$, $\fh$, $\fq$, $\fp^{\pm}$, $\Phi$, $\Phi(\fh)$, $\Phi(\fq)$, $\Phi^+$, $\Phi^+ (\fh)$, $\Phi^+ (\fq)$, $\rho(\fg)$, $\rho(\fh)$, $\rho(\fq)$, $W(\fg,\ft)$, $\spin$, $\Lambda(\spin)$, $\mu(A)$, $\mult_\spin(\beta)$. \\
                        \S\ref{section-2.1}: ${}^w \spin$, ${}^w \fh$, ${}^w \fq$, $W'$, $\fh(c)$. \\
                        \S\ref{section-2.2}: $\fp_I$, $\fl_I$, $\Phi_I$, $\Phi^+_I$, $\Phi_I(\fq)$, $\Phi^+_I(\fq)$, $\spin_I$, $\mu_I(A)$. \\
                        \S\ref{section-4}: $\epsilon_i$, $\alpha_i$, $\varpi_i$. \\
                        \S\ref{S4.1}: $\sigma$, $\mathcal{M}(\fq)$, $\mu_a$, $a_{i,*}$, $a_{*,j}$. \\
                        \S\ref{section-inductive}: $\mult_{(c_1,\ldots,c_k)}(b_1,\ldots,b_n)$, $\mathcal{P}_k(X)$, $\mathbf{1}_J$. \\
                        \S\ref{section-5.1}: $\mathrm{Part}(\binom{n}{2};n)$, $\mathcal{P}(n)$, $\lambda\preceq\lambda'$, $\lambda^0$, $\mu[\lambda]$, $\Lambda^+ (\spin)$. \\
                        \S\ref{section-4.3}: $N_\lambda$, $M_p(\lambda)$. \\
                        \S\ref{section-4.4}: $\mathcal{ST}(\lambda)$, $N_\tau$. \\
                        \S\ref{section6}: $\mu(\alpha,\beta)$, ${}^t \alpha$, $\Lambda'(\spin)$, $RT(\alpha,\beta)$.

                        %%% REFERENCES %%%
                        {\small\bibliography{spin}}
                        % Please, do not change the above line and do not insert your references
                        % into this file. Instead, insert your references into the commat.bib file.
                        % See commat.bib for further instructions.
                        
                        \EditInfo{April 5, 2023}{July 7, 2023}{Pasha Zusmanovich}

                \end{document}